\def\dx{\,{\rm dx}}
\newtheorem{theorem}{Theorem}[section]
\newtheorem{remark}[theorem]{Remark}
\newtheorem{lemma}[theorem]{Lemma}
\newtheorem{definition}[theorem]{Definition}
\newcounter{mnote}
\let\oldmarginpar\marginpar
\renewcommand\marginpar[1]{\-\oldmarginpar[\raggedleft\footnotesize #1]
  {\raggedright\footnotesize #1}}
\numberwithin{equation}{section}
\setlist[enumerate]{nosep}
\def\utau{\undertilde{\tau}}
\def\rot{{\rm rot}}
\def\curl{{\rm curl}}
\def\dv{{\rm div}}
\def\omp{\ominus^\perp}
\def\opp{\oplus^\perp}
\def\od{\mathbf{d}}
\def\oT{\mathbf{T}}
\def\odelta{\boldsymbol{\delta}}
\def\okappa{\boldsymbol{\kappa}}
\def\sX{\xX}
\def\sY{\yY}
\def\icr{{\sf icr}}
\def\hf{\boldsymbol{\mathfrak{H}}}
\def\R{\mathcal{R}}
\def\N{\mathcal{N}}
\def\xD{\boldsymbol{\mathbf{D}}}
\def\xM{\boldsymbol{\mathbf{M}}}
\def\xX{{\boldsymbol{\mathbf{X}}}}
\def\xY{{\boldsymbol{\mathbf{Y}}}}
\def\xv{\boldsymbol{\mathbf{v}}}
\def\xw{\boldsymbol{\mathbf{w}}}
\def\yD{\boldsymbol{\mathbbm{D}}}
\def\yN{\boldsymbol{\mathbbm{N}}}
\def\yT{\boldsymbol{\mathbbm{T}}}
\def\yY{{\boldsymbol{\mathbbm{Y}}}}
\def\yv{\boldsymbol{\mathbbm{v}}}
\def\yw{\boldsymbol{\mathbbm{w}}}
\def\asD{\yD}
\def\tyN{\yN}
\def\sD{\xD}
\def\avv{\yv}
\def\avw{\yw}
\def\vv{\xv}
\def\vw{\xw}
\def\aoT{\yT}
\def\txM{\widetilde{\xM}}
\def\uxM{{\undertilde{\xM}}}
\def\tyN{\mathbf{\widetilde{{\mathbbm{N}}}}}
\def\uyN{{\undertilde{\yN}}}
\def\fomega{\boldsymbol{\omega}}
\def\fmu{\boldsymbol{\mu}}
\def\fzeta{\boldsymbol{\zeta}}
\def\feta{\boldsymbol{\eta}}
\def\fsigma{\boldsymbol{\sigma}}
\def\ftau{\boldsymbol{\tau}}
\def\fvartheta{\boldsymbol{\vartheta}}
\def\fvarsigma{\boldsymbol{\varsigma}}
\def\fpsi{\boldsymbol{\psi}}
\def\fW{\boldsymbol{W}}
\def\fV{\boldsymbol{V}}
\def\ff{\boldsymbol{f}}
\def\ixalpha{\boldsymbol{\alpha}} 
\def\ixbeta{\boldsymbol{\beta}}
\def\pddemL{\mathbf{P}_{\od\cap\odelta}^{\rm m}\Lambda}
\def\pddempdL{\mathbf{P}_{\od\cap\odelta}^{\rm m+\od}\Lambda}
\def\pddempdeL{\mathbf{P}_{\od\cap\odelta}^{\rm m+\odelta}\Lambda}
\def\pddedmpdeL{\mathbf{P}_{\od\cap\odelta,\od}^{\rm m+\odelta}\Lambda}
\def\pddedempdeL{\mathbf{P}_{\od\cap\odelta,\odelta}^{\rm m+\odelta}\Lambda}
\def\pddemL{\mathbf{P}_{\od\cap\odelta}^{\rm m}\Lambda}
\begin{document}

\title{A primal finite element scheme of the Hodge Laplace problem}

\author{Shuo Zhang}
\address{LSEC, Institute of Computational Mathematics and Scientific/Engineering Computing, Academy of Mathematics and System Sciences, Chinese Academy of Sciences, Beijing 100190; University of Chinese Academy of Sciences, Beijing, 100049; People's Republic of China}
\email{szhang@lsec.cc.ac.cn}

\thanks{The research is partially supported by NSFC (11871465) and CAS (XDB41000000).}

\subjclass[2010]{Primary 47A05, 47A65, 65N12, 65N15, 65N30} 

%
%
%
%
%
%
%

\keywords{Hodge Laplace problem, primal formulation, finite element method, discrete Poincar\'e inequality}

\begin{abstract}
In this paper, a unified family, for any $n\geqslant 2$ and $1\leqslant k\leqslant n-1$, of nonconforming finite element schemes are presented for the primal weak formulation of the $n$-dimensional Hodge-Laplace equation on $H\Lambda^k\cap H^*_0\Lambda^k$ and on the simplicial subdivisions of the domain. The finite element scheme possesses an $\mathcal{O}(h)$-order convergence rate for sufficiently regular data, and an $\mathcal{O}(h^s)$-order rate on any $s$-regular domain, $0<s\leqslant 1$, no matter what topology the domain has. 
\end{abstract}

\maketitle

\tableofcontents

\section{Introduction}

Let $\Omega\subset\mathbb{R}^n$ be a domain with Lipschitz boundary. In this paper, we consider the primal weak formulation of the Hodge-Laplace problem: given $\ff\in L^2\Lambda^k(\Omega)$, find $\fomega\in H\Lambda^k(\Omega)\cap H^*_0\Lambda^k(\Omega)$, such that 
\begin{equation}\label{eq:modelhlori}
\left\{
\begin{array}{c}
\langle\fomega,\fvartheta\rangle_{L^2\Lambda^k}=0,\ \ \forall\,\fvartheta\in \hf\Lambda^k,\ \mbox{and}
\\  
\langle\od^k\fomega,\od^k\fmu\rangle_{L^2\Lambda^{k+1}}+\langle\odelta_k\fomega,\odelta_k\fmu\rangle_{L^2\Lambda^{k-1}}=\langle\ff-\mathbf{P}_{\boldsymbol{\mathfrak{H}}}\ff,\fmu\rangle_{L^2\Lambda^k},\ \ \forall\,\fmu\in H\Lambda^k(\Omega)\cap H^*_0\Lambda^k(\Omega).
\end{array}\right.
\end{equation}
Here, following \cite{Arnold.D2018feec}, we denote by $\Lambda^k(\Xi)$ the space of differential $k$-forms on an $n$-dimensional domain $\Xi$, and $L^2\Lambda^k(\Xi)$ consists of differential $k$-forms with coefficients in $L^2(\Xi)$ component by component, and $\langle\cdot,\cdot\rangle_{L^2\Lambda^k(\Xi)}$ is the inner product of the Hilbert space $L^2\Lambda^k(\Xi)$. In the sequel, we will occasionally drop $\Omega$ for differential forms on $\Omega$. The exterior differential operator $\od^k:\Lambda^k(\Xi)\to \Lambda^{k+1}(\Xi)$ is an unbounded operator from $L^2\Lambda^k(\Xi)$ to $\Lambda^{k+1}(\Xi)$. Denote
$$
H\Lambda^k(\Xi):=\left\{\fomega\in L^2\Lambda^k(\Xi):\od^k\fomega\in L^2\Lambda^{k+1}(\Xi)\right\},
$$
and $H\Lambda^k(\Xi)$ is a Hilbert space with the norm $\|\fomega\|_{L^2\Lambda^k(\Xi)}+\|\od^k\fomega\|_{L^2\Lambda^{k+1}(\Xi)}$. Denote by $H_0\Lambda^k(\Xi)$ the closure of $\mathcal{C}_0^\infty\Lambda^k(\Xi)$ in $H\Lambda^k(\Xi)$. The Hodge star operator $\star$ maps $L^2\Lambda^k(\Xi)$ isomorphically to $L^2\Lambda^{n-k}(\Xi)$ for each $0\leqslant k\leqslant n$. The \emph{codifferential operator} $\odelta_k$ defined by $\odelta_k\fmu=(-1)^{kn}\star\od^{n-k}\star\fmu$ is unbounded from $L^2\Lambda^k(\Xi)$ to $L^2\Lambda^{k-1}(\Xi)$.  Denote 
$$
H^*\Lambda^k(\Xi):=\left\{\fmu\in L^2\Lambda^k(\Xi):\odelta_k\fmu\in L^2\Lambda^{k-1}(\Xi)\right\},
$$
and $H^*_0\Lambda^k(\Xi)$ the closure of $\mathcal{C}_0^\infty\Lambda^k(\Xi)$ in $H^*\Lambda^k(\Xi)$. Then $H^*\Lambda^k(\Xi)=\star H\Lambda^{n-k}(\Xi)$, and $H^*_0\Lambda^k(\Xi)=\star H_0\Lambda^{n-k}(\Xi)$. Denote spaces of harmonic forms by 
$$
\hf_{(0)}\Lambda^k(\Xi):=\N(\od^k,H_{(0)}\Lambda^k(\Xi))\omp\R(\od^{k-1},H_{(0)}\Lambda^{k-1}(\Xi)),
$$
where $\N(\cdot,\cdot)$ and $\R(\cdot,\cdot)$ denote the kernel and range spaces of certain operators, and $\omp$ denotes the orthogonal difference, namely  $\N(\od^k,H_{(0)}\Lambda^k(\Xi))=\R(\od^{k-1},H_{(0)}\Lambda^{k-1}(\Xi))\opp \hf_{(0)}\Lambda^k(\Xi)$. Similarly 
$$
\hf_{(0)}^*\Lambda^k(\Xi):=\N(\odelta_k,H^*_{(0)}\Lambda^k(\Xi))\omp\R(\odelta_{k+1},H^*_{(0)}\Lambda^{k+1}(\Xi)).
$$ 
Then $\star\hf_0\Lambda^k=\hf^*_0\Lambda^{n-k}$, and further the Poincar\'e-Leftschetz duality holds as $\hf\Lambda^{n-k}=\star\hf_0\Lambda^k$. Besides, $\mathbf{P}_{\hf}$ denotes the $L^2$ projection to $\hf\Lambda^k$.

The model problem \eqref{eq:modelhlori} corresponds to a strong form that 
\begin{equation}
\od^k\fomega\in H^*_0\Lambda^{k+1}(\Omega),\ \ \ \odelta_k\fomega\in H\Lambda^{k-1}(\Omega),
\end{equation}
and
\begin{equation}
\fomega\perp\hf\Lambda^k(\Omega),\quad\mbox{and}\quad
\odelta_{k+1}\od^k\fomega+\od^{k-1}\odelta_k\fomega=\ff-\mathbf{P}_{\boldsymbol{\mathfrak{H}}}\ff. 
\end{equation}

The Hodge-Laplace problem arises in many applied sciences, including electromagnetics\cite{Monk.P2003mono,Hiptmair.R2002acta}, fluid-structure interaction \cite{Bathe.K;Nitikitpaiboon.C;Wang.X1995cs,Bermudez.A;Rodriguez.R1994cmame,Hamdi.M;Ousset.Y;Verchery.G1978ijnme}, and others. Particularly, the numerical solution of the Hodge Laplace equation is a central subject of the theory of finite element exterior calculus (FEEC), and we refer to \cite{Arnold.D;Falk.R;Winther.R2006acta,Arnold.D;Falk.R;Winther.R2010bams,Arnold.D2018feec} for a thorough introduction to FEEC. 
~\\

A major feature in the discretization of Hodge Laplace problem is that, the conforming finite element scheme for \eqref{eq:modelhlori} may lead to a spurious solution that converges to a wrong limit when the exact solution $\fomega$ is not regular enough. Indeed, as is well known, for domains which are not smooth enough, the singular part of $\fomega$ can not be captured by the conforming finite element space. To cope with this situation, a well-developed approach is to use mixed finite element method. Again, the main approach can be found in detail in \cite{Arnold.D;Falk.R;Winther.R2006acta,Arnold.D;Falk.R;Winther.R2010bams,Arnold.D2018feec}, for which the structure of de Rham complex plays a crucial role, and another key ingredient is that spaces of discrete harmonic forms are established isomorphic to the space of continuous harmonic forms. Besides, some recent progress can be found in \cite{Li.Y2019sinum,Demlow.A;Hirani.A2014} for {\it a posteriori} error estimation and adaptive methods, and in \cite{Hong.Q;Li.Y;Xu.J2022mc} for a detailed analysis of Discontinuous Galerkin (DG) methods in FEEC in the newly-presented eXtended Galerkin (XG) framework.

On the other hand, to discretize directly the primal formulation \eqref{eq:modelhlori} has been still attracting research interests. Virtual element methods are designed for the three dimensional vector potential formulation of magnetostatic problems \cite{Beiroa.L;Brezzi.F;Marini.D;Alessandro.R2018}, with the major interests restricted to cases  where the computational domain has no re-entrant corner, and the space of harmonic forms is not concerned for these cases. Nonconforming element methods and discontinuous Galerkin methods are also designed which can lead to a correct approximation of the nonsmooth solution for the $H(\curl)\cap H(\dv)$ problem in two dimension polygonal domains, particularly for \eqref{eq:modelhlori} on domains with connected boundary on which harmonic forms vanish; readers are referred to \cite{Brenner.S;Sung.L;Cui.J2008} for an interior penalty method, to \cite{brenner2009quadratic} for a nonconforming finite element method, and  to \cite{Brenner.S;Cui.J;Li.F;Sung.L2008nm} for a nonconforming finite element used with inter-element penalties. Recent works also include \cite{Barker.M2022thesis,Barker.M;Cao.S;Stern.A2022arxiv,Mirebeau.J2012aml}.
~\\

In this paper, we present a unified family, for any $n\geqslant 2$ and $1\leqslant k\leqslant n-1$, of nonconforming finite element schemes for the primal formulation \eqref{eq:modelhlori} and on the subdivision of the domain by simplexes. The main feature of the finite element schemes is a nonconforming finite element space for $H\Lambda^k\cap H^*_0\Lambda^k$ where all the finite element functions are defined by local shape function spaces and the continuity conditions, while for $\hf\Lambda^k$, we use the well-studied discrete space of harmonic forms as, e.g., in \cite{Arnold.D;Falk.R;Winther.R2006acta,Arnold.D;Falk.R;Winther.R2010bams,Arnold.D2018feec}, and no penalty term or stabilization is used in the schemes. The local shape function space is a slight enrichment based on \ $\mathcal{P}_0\Lambda^k(T)+\okappa(\mathcal{P}_0\Lambda^{k+1}(T))+\star\okappa\star(\mathcal{P}_0\Lambda^{k-1}(T))$, namely the minimal local space for $\odelta_{k+1}\od^k+\od^{k-1}\odelta_k$, but, not similar to \cite{Brenner.S;Sung.L;Cui.J2008,Brenner.S;Cui.J;Li.F;Sung.L2008nm,brenner2009quadratic,Barker.M2022thesis,Barker.M;Cao.S;Stern.A2022arxiv,Mirebeau.J2012aml}, it does not contain the complete linear polynomial space. Another difference from these existing works, particularly \cite{brenner2009quadratic,Mirebeau.J2012aml}, is that the finite element functions in this present paper possess a different kind of inter-element continuity. As precisely described in \eqref{eq:deffems}, the continuity is imposed in a dual way; the dual way for imposing continuity has been suggested by the theory of partially adjoint operators in \cite{Zhang.S2022padao-arxiv}, and used for constructing nonconforming $H(\od)$ Whitney form spaces as well as commutative diagrams. The way makes the finite element functions not correspond to a ``finite element" in the sense of Ciarlet's triple \cite{Ciarlet.P1978book}, and the analysis thus relies on non-standard techniques. In this paper, for the analysis, the discrete Poincar\'e inequality, which is crucial with respect to nontrivial topologies, is proved by the theory of partially adjoint operators developed in \cite{Zhang.S2022padao-arxiv}, and different from \cite{Zhang.S2022padao-arxiv}, the error estimation is accomplished by an indirect way; namely, we first show as Lemma \ref{lem:disequivalent} that certain primal scheme \eqref{eq:modelhldisff0} is, in some sense, equivalent to a classical mixed element scheme \eqref{eq:classdisf0}, and the error estimation of the classical mixed scheme can be used as a midway step for the analysis. We finally show that, the finite element scheme possesses an $\mathcal{O}(h)$-order convergence rate for sufficiently regular data, and an $\mathcal{O}(h^s)$-order rate on any $s$-regular domain, $0<s\leqslant 1$, no matter the topology is trivial or not. 
~\\

It is interesting to clarify again that, the schemes given in this paper are primal ones, even though the continuity conditions for the finite element functions are imposed in a dual way, and for some special cases, the schemes can be equivalent, in some sense, to mixed schemes. In principle, the finite element scheme aims at \eqref{eq:modelhlori} (as the primal weak formulation (4.15) of \cite{Arnold.D2018feec}) and utilizes only a single field. Meanwhile, the equivalence between primal and mixed finite element schemes has been found as to, e.g., the Poisson and the biharmonic equations\cite{Marini.L1985sinum,Arnold.D;Brezzi.F1985}. For practical implementation, in the present paper, a precise set of basis functions can be figured out for the newly-designed finite element space, the supports of which are each contained in a vertex patch, and the programming of the scheme can be done in a standard routine as for the standard ``primal finite element" method. The figuration of basis functions is quite similar to the procedure given in \cite{Zhang.S2022padao-arxiv}, but also with essentially different steps. The basis functions are presented in a unified way, and an illustration is given for the two-dimensional case for example. Locally supported basis functions have been also found and implemented for many other specific non-Ciarlet type finite element spaces\cite{Fortin.M;Soulie.M1983,Park.C;Sheen.D2003,Zhang.S2020IMA,Zhang.S2021SCM,Liu.W;Zhang.S2022arxiv,Zeng.H;Zhang.C;Zhang.S2020arxiv,Xi.Y;Ji.X;Zhang.S2021cicp,Xi.Y;Ji.X;Zhang.S2020jsc}.
~\\

The remaining of the paper is organized as follows. In Section \ref{sec:pre}, some preliminaries are collected. Particularly, some key points of the theory of partially adjoint operator, which is developed in \cite{Zhang.S2022padao-arxiv} and is fundamental in the present paper, are reviewed, including the definitions of base operator pair (Definition \ref{def:basepair}) and of partially adjoint operators (Definition \ref{def:pao}), and the quantified closed range theorem for partially adjoint operators (Theorem \ref{thm:chpi}). In Section \ref{sec:fes}, the finite element space is constructed, and the discrete Poincar\'e inequality of the space is proved by the theory of partially adjoint operator. In Section \ref{sec:fescheme}, a unified family of finite element schemes are defined, and the error estimation and the  implementation are presented.

\section{Preliminaries}

\label{sec:pre}

%

%
%
\subsection{Theory of partially adjoint operators}
Let $\xX$ and $\yY$ be two Hilbert spaces with respective inner products $\langle\cdot,\cdot\rangle_{\xX}$ and $\langle\cdot,\cdot\rangle_{\yY}$, and respective norms $\|\cdot\|_{\xX}$ and $\|\cdot\|_{\yY}$. Let $(\oT,\txM):\xX\to \yY$ and $(\aoT,\tyN):\yY\to\xX$ be two closed operators, not necessarily densely defined. Denote, for $\vv\in\txM$, $\|\vv\|_\oT:=(\|\vv\|_\xX^2+\|\oT\vv\|_\yY^2)^{1/2}$, and for $\avv\in\tyN$, $\|\avv\|_{\aoT}:=(\|\avv\|_\yY^2+\|\aoT\avv\|_\xX^2)^{1/2}$. Denote
\begin{equation}\label{eq:uxm}
\uxM:=\left\{\vv\in \txM:\langle\vv,\aoT\avv\rangle_\xX-\langle\oT\vv,\avv\rangle_\yY=0,\ \forall\,\avv\in\tyN\right\},
\end{equation}
\begin{equation}\label{eq:uyn}
\uyN:=\left\{\avv\in \tyN:\langle\vv,\aoT\avv\rangle_\xX-\langle\oT\vv,\avv\rangle_\yY=0,\ \forall\,\vv\in\txM\right\},
\end{equation}
\begin{equation}\label{eq:xmb}
\xM_{\rm B}:=\left\{\vv\in \txM:\langle \vv,\vw \rangle_{\xX}=0,\forall\,\vw\in \N(\oT,\uxM);\ \langle\oT \vv,\oT \vw\rangle_{\yY}=0,\ \forall\,\vw\in \uxM\right\},
\end{equation}
and
\begin{equation}\label{eq:ynb}
\yN_{\rm B}:=\left\{\avv\in \tyN:\langle \avv,\avw \rangle_{\yY}=0,\forall\,\avw\in \N(\aoT,\uyN);\ \langle\aoT \avv,\aoT \avw\rangle_{\xX}=0,\ \forall\,\avw\in \uyN\right\}.
\end{equation}
We call $(\xM_{\rm B},\yN_{\rm B})$ the {\bf twisted} part of $(\txM,\tyN)$. 

\begin{definition}[Definition 2.13 of \cite{Zhang.S2022padao-arxiv}]\label{def:basepair}
A pair of closed operators $\left[(\oT,\txM):\xX\to\yY,(\aoT,\tyN):\yY\to\xX\right]$ is called a {\bf base operator pair}, if, with notations \eqref{eq:uxm}, \eqref{eq:uyn}, \eqref{eq:xmb} and \eqref{eq:ynb}, 
\begin{enumerate}
\item $\R(\oT,\txM)$, $\R(\aoT,\tyN)$, $\R(\oT,\uxM)$ and $\R(\aoT,\uyN)$ are all closed; 
\item $\mathcal{N}(\oT,\xM_{\rm B})$ and $\mathcal{R}(\aoT,\yN_{\rm B})$ are isomorphic, and $\mathcal{N}(\aoT,\yN_{\rm B})$ and $\mathcal{R}(\oT,\xM_{\rm B})$ are isomorphic. 
\end{enumerate}
\end{definition}

For $\left[(\oT,\txM),(\aoT,\tyN)\right]$ a base operator pair, for nontrivial $\R(\aoT,\yN_{\rm B})$ and $\N(\oT,\xM_{\rm B})$, denote 
\begin{equation}\label{eq:alpha}
\displaystyle \alpha:=\inf_{0\neq \vv\in\mathcal{N}(\oT,\xM_{\rm B})}\sup_{\vw\in \mathcal{R}(\aoT,\yN_{\rm B})}\frac{\langle\vv,\vw\rangle_\xX}{\|\vv\|_\xX\|\vw\|_\xX}
=
\inf_{0\neq \vw\in\mathcal{R}(\aoT,\yN_{\rm B})}\sup_{\vv\in\mathcal{N}(\oT,\xM_{\rm B})}\frac{\langle\vv,\vw\rangle_\xX}{\|\vv\|_\xX\|\vw\|_\xX},
\end{equation} 
and for nontrivial $\N(\aoT,\yN_{\rm B})$ and $\R(\oT,\xM_{\rm B})$, denote
\begin{equation}\label{eq:beta}
\beta:=\inf_{0\neq \avv\in\mathcal{N}(\aoT,\yN_{\rm B})}\sup_{\avw\in\mathcal{R}(\oT,\xM_{\rm B})}\frac{\langle\avv,\avw\rangle_\yY}{\|\avv\|_\yY\|\avw\|_\yY}
=
\inf_{0\neq \avw\in\mathcal{R}(\oT,\xM_{\rm B})}\sup_{\avv\in\mathcal{N}(\aoT,\yN_{\rm B})}\frac{\langle\avv,\avw\rangle_\yY}{\|\avv\|_\yY\|\avw\|_\yY}.
\end{equation}
Then $\alpha>0$ and $\beta>0$. We further make a convention that,
\begin{equation}\label{eq:alphabeta=1}
\displaystyle \left\{
\begin{array}{ll}
\alpha=1, & \mbox{if}\, \mathcal{N}(\oT,\xM_{\rm B})=\mathcal{R}(\aoT,\yN_{\rm B})=\left\{0\right\};
\\
\beta=1, & \mbox{if}\, \mathcal{N}(\aoT,\yN_{\rm B})=\mathcal{R}(\oT,\xM_{\rm B})=\left\{0\right\}.
\end{array}
\right.
\end{equation}

\begin{definition}[Definition 2.15 of \cite{Zhang.S2022padao-arxiv}]\label{def:pao}
For $\left[(\oT,\txM):\xX\to \yY,(\aoT,\tyN):\yY\to\xX\right]$ a base operator pair, two operators $(\oT,\sD)\subset(\oT,\txM)$ and $(\aoT,\asD)\subset(\aoT,\tyN)$ are called {\bf partially adjoint} based on $\left[(\oT,\txM),(\aoT,\tyN)\right]$, if
\begin{equation}\label{eq:pacondition}
\displaystyle \sD=\left\{\vv\in \txM:\langle \vv,\aoT \avv\rangle_\sX-\langle \oT \vv,\avv\rangle_\sY=0,\ \forall\,\avv\in\asD\right\},
\end{equation}
and
\begin{equation}
\displaystyle \asD=\left\{\avv\in \tyN: \langle \vv,\aoT \avv\rangle_\sX-\langle \oT \vv,\avv\rangle_\sY=0,\ \forall\,\vv\in \sD\right\}.
\end{equation}
\end{definition}

\begin{definition}[Definition 2.8 of \cite{Zhang.S2022padao-arxiv}]
For $(\oT,\xD):\xX\to \yY$ a closed  operator, denote 
$$
\xD^{\boldsymbol \lrcorner}:=\left\{\xv\in \xD:\langle \xv,\xw\rangle_\xX=0,\ \forall\,\xw\in \mathcal{N}(\oT,\xD)\right\}.
$$ Define the {\bf index of closed range} of $(\oT,\xD)$ as
\begin{equation}\label{eq:deficr}
\mathsf{icr}(\oT,\xD):=\left\{\begin{array}{rl}
\displaystyle \sup_{0\neq\xv\in \xD^{\boldsymbol \lrcorner}}\frac{\|\xv\|_\xX}{\|\oT\xv\|_\yY},&\mbox{if}\ \xD^{\boldsymbol \lrcorner}\neq\left\{0\right\};
\\
0,&\mbox{if}\ \xD^{\boldsymbol \lrcorner}=\left\{0\right\}.
\end{array}\right.
\end{equation}
\end{definition}
Note that $\icr(\oT,\sD)$ evaluates in $\left[0,+\infty\right]$, and $\R(\oT,\sD)$ is closed if and only if $\icr(\oT,\sD)<\infty$. Further, $\icr(\oT,\xD^{\boldsymbol \lrcorner})$ plays like the constant for Poincar\'e inequality in the sense that $\|\vv\|_\xX\leqslant \icr(\oT,\xD^{\boldsymbol \lrcorner})\|\oT\vv\|_\xY$ for $\vv\in\xD^{\boldsymbol \lrcorner}$.

\begin{theorem}[quantified closed range theorem, Theorem 2.21 of \cite{Zhang.S2022padao-arxiv}]\label{thm:chpi}
For $\left[(\oT,\sD),(\aoT,\asD)\right]$ partially adjoint based on $\left[(\oT,\txM),(\aoT,\tyN)\right]$, with notations given in \eqref{eq:uxm}, \eqref{eq:uyn}, \eqref{eq:alpha}, \eqref{eq:beta} and \eqref{eq:alphabeta=1}, if $\icr(\aoT,\asD)<\infty$,
\begin{equation}
\icr(\oT,\sD)\leqslant (1+\alpha^{-1})\cdot\icr(\oT,\txM)+\alpha^{-1}\icr(\aoT,\asD)+\icr(\oT,\uxM);
\end{equation}
if $\icr(\oT,\sD)<\infty$,
\begin{equation}\label{eq:icrtstardstar}
\icr(\aoT,\asD) \leqslant (1+\beta^{-1})\cdot\icr(\aoT,\tyN)+\beta^{-1}\icr(\oT,\sD)+\icr(\aoT,\uyN).
\end{equation}
\end{theorem}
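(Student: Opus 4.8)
The plan is to prove the two estimates symmetrically; I describe the first inequality, the bound on $\icr(\oT,\sD)$, and the second follows by swapping the roles of the two operators. The starting point is the defining property \eqref{eq:pacondition} of partial adjointness together with the structural decompositions available for a base operator pair. First I would set up the relevant orthogonal decompositions: inside $\sD$ one has $\sD=\N(\oT,\sD)\opp \sD^{\boldsymbol\lrcorner}$, and the index of closed range measures precisely the Poincar\'e constant on $\sD^{\boldsymbol\lrcorner}$, so it suffices to bound $\|\vv\|_\xX$ by a multiple of $\|\oT\vv\|_\yY$ for $\vv\in\sD^{\boldsymbol\lrcorner}$. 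The idea is to split such a $\vv$ according to the coarser decomposition coming from $(\oT,\txM)$, namely write $\vv = \vv_0 + \vv_1$ with $\vv_0\in\N(\oT,\txM)$ and $\vv_1\in\txM^{\boldsymbol\lrcorner}$ (orthogonal in $\xX$); then $\oT\vv_1=\oT\vv$, so $\|\vv_1\|_\xX\leqslant\icr(\oT,\txM)\|\oT\vv\|_\yY$ immediately, and the whole problem reduces to controlling the ``low-frequency'' component $\vv_0\in\N(\oT,\txM)$.

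The heart of the argument is therefore estimating $\|\vv_0\|_\xX$. Here I would use the inf-sup constant $\alpha$ from \eqref{eq:alpha}, which pairs $\N(\oT,\xM_{\rm B})$ against $\R(\aoT,\yN_{\rm B})$. One further decomposes $\vv_0$ within $\N(\oT,\txM)$ using the twisted part: a component lying in (the closure of) $\R(\aoT,\tyN)\cap\N(\oT,\txM)$, which is handled by testing against elements of $\asD$ and invoking the partial-adjointness identity \eqref{eq:pacondition} to transfer the pairing onto $\oT\vv$ — this is where $\icr(\aoT,\asD)$ enters, since a preimage under $\aoT$ of the test direction must be chosen with controlled norm — and a component in $\N(\oT,\xM_{\rm B})$, for which the inf-sup condition $\alpha>0$ produces a test function $\vw\in\R(\aoT,\yN_{\rm B})$ realizing $\langle\vv_0,\vw\rangle_\xX\geqslant\alpha\|\vv_0\|_\xX\|\vw\|_\xX$; again \eqref{eq:pacondition} rewrites this pairing in terms of $\oT\vv$ and a preimage whose norm is controlled by $\icr(\oT,\uxM)$ or $\icr(\aoT,\asD)$. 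Bookkeeping the three contributions — $\icr(\oT,\txM)$ from $\vv_1$, $\alpha^{-1}\icr(\aoT,\asD)$ and $\alpha^{-1}\icr(\oT,\txM)$ from the inf-sup step, and $\icr(\oT,\uxM)$ from the ``untwisted'' null component — and collecting like terms yields exactly $(1+\alpha^{-1})\icr(\oT,\txM)+\alpha^{-1}\icr(\aoT,\asD)+\icr(\oT,\uxM)$.

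The finiteness hypothesis $\icr(\aoT,\asD)<\infty$ is used precisely to guarantee that $\R(\aoT,\asD)$ is closed, so that the preimages invoked above exist with norm bounds; without it the pairing-transfer step would produce no finite constant. I would also need the closedness of $\R(\oT,\txM)$, $\R(\aoT,\tyN)$, $\R(\oT,\uxM)$, $\R(\aoT,\uyN)$ (part (1) of Definition \ref{def:basepair}) to justify each of the orthogonal splittings, and the isomorphism clauses (part (2)) to ensure the twisted decomposition of $\N(\oT,\txM)$ is genuinely a direct sum with the pieces I use. The conventions \eqref{eq:alphabeta=1} take care of the degenerate cases where one or more of these subspaces is trivial, so those branches are immediate.

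The main obstacle I anticipate is the careful decomposition of the null-space component $\vv_0\in\N(\oT,\txM)$ and the correct attribution of each term to the right index: one must verify that the twisted part $(\xM_{\rm B},\yN_{\rm B})$ interacts with $\sD$ and $\asD$ in the way the definitions prescribe, and that the preimage selection in the pairing-transfer step can always be made inside the domain on which the relevant index is finite (so that no spurious infinite constant creeps in). Getting the constants to combine into exactly the stated form — rather than a larger multiple — requires choosing the orthogonal complements so that the $\vv_1$ and $\vv_0$ estimates do not double-count the $\icr(\oT,\txM)$ contribution; this is the delicate step, and is presumably where the factor $(1+\alpha^{-1})$ rather than $\alpha^{-1}$ alone comes from.
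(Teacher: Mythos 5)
A preliminary remark: Theorem \ref{thm:chpi} is not proved in this paper at all; it is imported verbatim from \cite{Zhang.S2022padao-arxiv} (Theorem 2.21 there), so your proposal can only be judged on its own terms. On those terms it has a genuine gap at exactly the step that carries the content of the theorem. The opening reduction is fine: working on $\sD^{\boldsymbol\lrcorner}$, writing $\vv=\vv_0+\vv_1$ with $\vv_1\perp\N(\oT,\txM)$, and absorbing $\vv_1$ into $\icr(\oT,\txM)\|\oT\vv\|_\yY$ is correct. The problem is the ``pairing-transfer'' for $\vv_0$. The inf-sup constant $\alpha$ hands you a test direction $\xw\in\R(\aoT,\yN_{\rm B})$, and you then assert that a preimage of $\xw$ under $\aoT$ ``must be chosen with controlled norm'' inside $\asD$, so that \eqref{eq:pacondition} converts $\langle\vv,\xw\rangle_\xX$ into $\langle\oT\vv,\cdot\rangle_\yY$ with the factor $\icr(\aoT,\asD)$. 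But $\R(\aoT,\yN_{\rm B})$ need not be contained in $\R(\aoT,\asD)$: for instance $\sD=\txM$, $\asD=\uyN$ is a legitimate partially adjoint pair, and there $\R(\aoT,\asD)=\R(\aoT,\uyN)$ is $\xX$-orthogonal to $\R(\aoT,\yN_{\rm B})$ by \eqref{eq:ynb}. So the preimage you invoke may not exist, and nothing in the sketch explains how the test direction is to be replaced by one reachable from $\asD$ without degrading the constant $\alpha$; deciding how much of $\R(\aoT,\yN_{\rm B})$ is reachable from $\asD$ relative to the $\vv_0$ that can actually occur is precisely where the structure of partially adjoint pairs (the splittings $\sD=\uxM\oplus(\sD\cap\xM_{\rm B})$, $\asD=\uyN\oplus(\asD\cap\yN_{\rm B})$ and the mutual annihilation of the twisted parts under the pairing) must be used. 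Asserting this step amounts to assuming what is to be proved.

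The bookkeeping of the remaining constants is also not coherent. For $\vv\in\sD^{\boldsymbol\lrcorner}$ the null component $\vv_0$ automatically lies in $\N(\oT,\xM_{\rm B})$: its $\N(\oT,\uxM)$-part vanishes because $\vv\perp\N(\oT,\sD)\supset\N(\oT,\uxM)$ while $\vv_1$ and the $\N(\oT,\xM_{\rm B})$-part are $\xX$-orthogonal to $\N(\oT,\uxM)$. Hence your further splitting of $\vv_0$ into a piece of $\overline{\R(\aoT,\tyN)}\cap\N(\oT,\txM)$ and a piece of $\N(\oT,\xM_{\rm B})$ is unnecessary, and it is also the wrong place to look for the last term: $\icr(\oT,\uxM)$ is a Poincar\'e-type constant for $(\oT,\uxM)$, mapping $\xX$-norms to $\yY$-norms, so ``a preimage under $\aoT$ whose norm is controlled by $\icr(\oT,\uxM)$'' is a category error. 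The natural way this term enters is through the other decomposition, $\vv=\vv_{\rm int}+\vv_{\rm B}$ with $\vv_{\rm int}\in\uxM$ and $\vv_{\rm B}\in\sD\cap\xM_{\rm B}$, where $\oT\vv_{\rm int}\perp\oT\vv_{\rm B}$ and the orthogonality of $\vv$ to $\N(\oT,\uxM)$ let $\icr(\oT,\uxM)$ bound $\|\vv_{\rm int}\|_\xX$ by $\|\oT\vv\|_\yY$ outright, with coefficient one as in the stated bound; in your route this constant either acquires a spurious $\alpha^{-1}$ or has no place at all. Until the transfer step is justified and each constant is attached to a concrete, valid estimate, the proposal does not establish the stated inequalities.
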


\subsection{Polynomial spaces on a simplex}
Denote the set of $k$-indices as
$$
\mathbb{IX}_{k,n}:=\left\{\ixalpha=(\ixalpha_1,\dots,\ixalpha_k)\in\mathbb{N}^k:1\leqslant \ixalpha_1<\ixalpha_2<\dots<\ixalpha_k\leqslant n,\ \mathbb{N}\ \mbox{the\ set\ of\ integers}\right\}. 
$$

Then 
$$
\od^k(\okappa(\dx^{\ixalpha_1}\wedge\dots\wedge\dx^{\ixalpha_{k+1}}))=(k+1)\dx^{\ixalpha_1}\wedge\dots\wedge\dx^{\ixalpha_{k+1}},
$$
and
$$
\odelta_k(\star\okappa\star(\dx^{\ixalpha_1}\wedge\dots\wedge \dx^{\ixalpha_{k-1}}))=(-1)^{kn-n-1} (n-k+1)(\dx^{\ixalpha_1}\wedge\dots\wedge\dx^{\ixalpha_{k-1}}),
$$
where $\okappa$ is the Koszul operator 
$$
\okappa(\dx^{\ixalpha_1}\wedge\dots\wedge\dx^{\ixalpha_k})=\sum_{j=1}^k(-1)^{j+1}x^{\ixalpha_j}\dx^{\ixalpha_1}\wedge\dots\wedge\dx^{\ixalpha_{j-1}}\wedge\dx^{\ixalpha_{j+1}}\wedge\dots\wedge\dx^{\ixalpha_k}.
$$

Given $T$ a simplex, denote on $T$ $\tilde{x}^j=x^j-c_j$ where $c_j$ is a constant such that $\int_T\tilde{x}^j=0$. Denote a simplex dependent Koszul operator 
$$
\okappa_T(\dx^{\ixalpha_1}\wedge\dots\wedge\dx^{\ixalpha_k}):=\sum_{j=1}^k(-1)^{(j+1)}\tilde{x}^{\ixalpha_j}\dx^{\ixalpha_1}\wedge\dots\dx^{\ixalpha_{j-1}}\wedge\dx^{\ixalpha_{j+1}}\wedge\dots\wedge\dx^{\ixalpha_k},\ \mbox{for}\ \ixalpha\in\mathbb{IX}_{k,n}.
$$
Then
$$
\od^{k-1}\okappa_T(\dx^{\ixalpha_1}\wedge\dots\dx^{\ixalpha_k})=k\dx^{\ixalpha_1}\wedge\dots\dx^{\ixalpha_k}. 
$$
In this part and in the sequel, we make the convention that, for $\ixalpha\in\mathbb{IX}_{k,n}$, we use $\ixbeta$ for one in $\mathbb{IX}_{n-k,n}$, such that $\ixalpha$ and $\ixbeta$ partition $\{1,2,\dots,n\}$. For $\ixalpha\in\mathbb{IX}_{k,n}$, following \cite{Zhang.S2022primalddelta-arxiv}, denote 
$$
\tilde\fmu_{\odelta,T}^{\ixalpha}=\sum_{j=1}^k[(\tilde{x}^{\ixalpha_j})^2-c^{\ixalpha_j}]\dx^{\ixalpha_1}\wedge\dots\wedge\dx^{\ixalpha_k}, 
$$
and
$$
\tilde\fmu_{\od,T}^{\ixalpha}=\sum_{j=1}^{n-k}[(\tilde{x}^{\ixbeta_j})^2-c^{\ixbeta_j}]\dx^{\ixalpha_1}\wedge\dx^{\ixalpha_2}\wedge\dots\wedge\dx^{\ixalpha_k}, 
$$
where $c^{\ixalpha_j}$ and $c^{\ixbeta_j}$ are constants such that $\int_T [(\tilde x^{\ixalpha_j})^2-c^{\ixalpha_j}]=0$ and $\int_T[(\tilde x^{\ixbeta_j})^2-c^{\ixbeta_j}]=0$. Then, (\cite{Zhang.S2022primalddelta-arxiv})
$$
\od^k\tilde\fmu_{\odelta,T}^{\ixalpha}=0,\quad \odelta_k\tilde\fmu_{\odelta,T}^{\ixalpha}=2(-1)^n\okappa_T(\dx^{\ixalpha_1}\wedge\dx^{\ixalpha_2}\wedge\dots\wedge \dx^{\ixalpha_k}),
$$

$$
\odelta_k\tilde\fmu_{\od,T}^{\ixalpha}=0,\quad \mbox{and},\quad \od^k\tilde\fmu_{\od,T}^{\ixalpha}=2(-1)^{n(1+k)+1}\star(\okappa_T(\star (\dx^{\ixalpha_1}\wedge\dots\dx^{\ixalpha_k}))).
$$

Denote, following \cite{Zhang.S2022primalddelta-arxiv},
\begin{equation}\label{eq:defhd}
\mathcal{H}^2_{\od}\Lambda^k(T):={\rm span}\left\{\tilde\fmu_{\od,T}^{\ixalpha}:\ixalpha\in\mathbb{IX}_{k,n}\right\},
\end{equation}
and
\begin{equation}\label{eq:defhdelta}
\mathcal{H}^2_{\odelta}\Lambda^k(T):={\rm span}\left\{\tilde\fmu_{\odelta,T}^{\ixalpha}:\ixalpha\in\mathbb{IX}_{k,n}\right\}.
\end{equation}

\begin{lemma}\label{lem:surjecdde}(\cite{Zhang.S2022primalddelta-arxiv})
\begin{enumerate}
\item $\od^k$ is bijective from $\okappa_T(\mathcal{P}_0\Lambda^{k+1})$ onto $\mathcal{P}_0\Lambda^{k+1}$, and bijective from $\mathcal{H}^2_{\od}\Lambda^k(T)$ onto $\star\okappa_T\star(\mathcal{P}_0\Lambda^k(T))$.
\item $\odelta_k$ is bijective from $\star\okappa_T\star(\mathcal{P}_0\Lambda^{k-1})$ onto $\mathcal{P}_0\Lambda^{k-1}$, and bijective from $\mathcal{H}^2_{\odelta}\Lambda^k(T)$ onto $\okappa_T(\mathcal{P}_0\Lambda^k(T))$.
\end{enumerate}
\end{lemma}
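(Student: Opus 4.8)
The plan is to prove all four bijectivity statements by the same short linear-algebra argument, since they share one structure: for each operator I would exhibit an explicit linear ``section'' with image in the stated domain, read off from the identities already recorded that the operator composed with this section is a nonzero scalar multiple of the identity, and then conclude bijectivity by a dimension count. One preliminary remark: the quoted identities stated with the standard Koszul operator $\okappa$ transfer verbatim with $\okappa_T$ in its place, because $\okappa-\okappa_T$ sends a constant-coefficient form to a constant-coefficient form, and such forms are annihilated by $\od$ and, after the relevant applications of $\star$, by $\odelta_k$.

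First I would record the single auxiliary fact needed: for $1\leqslant j\leqslant n$, $\okappa_T$ is injective on $\mathcal{P}_0\Lambda^j(T)$. This is immediate from $\od^{j-1}\okappa_T(\dx^{\ixalpha_1}\wedge\dots\wedge\dx^{\ixalpha_j})=j\,\dx^{\ixalpha_1}\wedge\dots\wedge\dx^{\ixalpha_j}$, i.e. $\od^{j-1}\circ\okappa_T=j\cdot\mathrm{Id}$ on $\mathcal{P}_0\Lambda^j(T)$; hence $\okappa_T$ is an isomorphism of $\mathcal{P}_0\Lambda^j(T)$ onto $\okappa_T(\mathcal{P}_0\Lambda^j(T))$. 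Since $\star$ is an isomorphism carrying constant-coefficient forms to constant-coefficient forms, $\star\okappa_T\star$ is likewise injective on $\mathcal{P}_0\Lambda^\ell(T)$ whenever $0\leqslant\ell\leqslant n-1$. In the regime $1\leqslant k\leqslant n-1$ the Koszul degrees that occur all lie in $[1,n]$, so every domain appearing in the lemma has the dimension ($\binom{n}{k+1}$, $\binom{n}{k}$, or $\binom{n}{k-1}$) of the corresponding constant-form target.

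Then the four claims follow. For the first half of (1): $\od^k\circ\okappa_T=(k+1)\,\mathrm{Id}$ on $\mathcal{P}_0\Lambda^{k+1}(T)$, so $\od^k$ restricted to $\okappa_T(\mathcal{P}_0\Lambda^{k+1}(T))$ equals $(k+1)$ times the inverse of the isomorphism $\okappa_T|_{\mathcal{P}_0\Lambda^{k+1}(T)}$, hence is a bijection onto $\mathcal{P}_0\Lambda^{k+1}(T)$. For the second half of (1): let $\Phi$ be the linear extension of $\dx^{\ixalpha_1}\wedge\dots\wedge\dx^{\ixalpha_k}\mapsto\tilde\fmu_{\od,T}^\ixalpha$, with image in $\mathcal{H}^2_{\od}\Lambda^k(T)$; the identity for $\od^k\tilde\fmu_{\od,T}^\ixalpha$ gives $\od^k\circ\Phi=2(-1)^{n(1+k)+1}\,\star\okappa_T\star$ on $\mathcal{P}_0\Lambda^k(T)$, which is injective, so $\Phi$ is injective, $\dim\mathcal{H}^2_{\od}\Lambda^k(T)=\binom{n}{k}$, $\Phi$ is a bijection, and therefore $\od^k$ is a bijection from $\mathcal{H}^2_{\od}\Lambda^k(T)$ onto $\star\okappa_T\star(\mathcal{P}_0\Lambda^k(T))$. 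Part (2) is the mirror image: for its first half the identity $\odelta_k(\star\okappa_T\star(\dx^{\ixalpha_1}\wedge\dots\wedge\dx^{\ixalpha_{k-1}}))=(-1)^{kn-n-1}(n-k+1)\,\dx^{\ixalpha_1}\wedge\dots\wedge\dx^{\ixalpha_{k-1}}$ exhibits $\odelta_k$ on $\star\okappa_T\star(\mathcal{P}_0\Lambda^{k-1}(T))$ as a nonzero multiple of the inverse of $\star\okappa_T\star|_{\mathcal{P}_0\Lambda^{k-1}(T)}$, and for its second half $\odelta_k\tilde\fmu_{\odelta,T}^\ixalpha=2(-1)^n\,\okappa_T(\dx^{\ixalpha_1}\wedge\dots\wedge\dx^{\ixalpha_k})$ plays the role of the $\od^k\tilde\fmu_{\od,T}^\ixalpha$ identity. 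I do not expect a genuine obstacle here; the only thing demanding attention is the bookkeeping of the first two paragraphs --- the harmless $\okappa\leftrightarrow\okappa_T$ interchange and checking that $1\leqslant k\leqslant n-1$ keeps the relevant form-degrees in range --- with no analytic estimate or topological input involved.
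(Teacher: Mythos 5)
The paper itself offers no proof of this lemma: it is quoted verbatim from \cite{Zhang.S2022primalddelta-arxiv}, so there is no in-paper argument to compare against. Your proof is correct and is precisely the elementary verification those preliminary identities are set up to support: $\od^{k}\circ\okappa_T=(k+1)\,\mathrm{Id}$ on $\mathcal{P}_0\Lambda^{k+1}(T)$, $\odelta_k\circ\star\okappa_T\star=(-1)^{kn-n-1}(n-k+1)\,\mathrm{Id}$ on $\mathcal{P}_0\Lambda^{k-1}(T)$, and the formulas for $\od^k\tilde\fmu_{\od,T}^{\ixalpha}$ and $\odelta_k\tilde\fmu_{\odelta,T}^{\ixalpha}$ reduce all four claims to injectivity of $\okappa_T$ (hence of $\star\okappa_T\star$) on constant forms of degree between $1$ and $n$, which your range check $1\leqslant k\leqslant n-1$ secures, and your remark that $\okappa-\okappa_T$ maps constant forms to constant forms (killed by $\od$, and by $\odelta$ after $\star$) correctly transfers the two identities stated with $\okappa$; the dimension count via the map $\Phi$ is harmless but not even needed, since injectivity and surjectivity of $\od^k$ on ${\rm span}\{\tilde\fmu_{\od,T}^{\ixalpha}\}$ already follow directly from $\od^k\circ\Phi$ being a nonzero multiple of the injective map $\star\okappa_T\star$.
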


\begin{lemma}\label{lem:hfreq}\cite{Zhang.S2022primalddelta-arxiv}
There exists a constant $C_{k,n}$, depending on the regularity of $T$, such that
\begin{equation}
\|\fmu\|_{L^2\Lambda^k(T)}\leqslant C_{k,n}h_T\|\odelta_k\fmu\|_{L^2\Lambda^{k-1}(T)},\ \ \mbox{for}\ \fmu\in \star\okappa_T\star(\mathcal{P}_0\Lambda^{k-1}(T))+\mathcal{H}^2_{\odelta}\Lambda^k(T),
\end{equation}
and
\begin{equation}
\|\fmu\|_{L^2\Lambda^k(T)}\leqslant C_{k,n}h_T\|\od^k\fmu\|_{L^2\Lambda^{k+1}(T)},\ \ \mbox{for}\ \fmu\in \okappa_T(\mathcal{P}_0\Lambda^{k+1}(T))+\mathcal{H}^2_{\od}\Lambda^k(T).
\end{equation}
\end{lemma}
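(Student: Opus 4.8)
The plan is to reduce both inequalities---each a finite-dimensional statement---to a norm equivalence plus a dilation argument, and to derive the second from the first by Hodge duality. I will concentrate on the $\odelta_k$-inequality.

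First I would isolate the algebraic structure of $V:=\star\okappa_T\star(\mathcal{P}_0\Lambda^{k-1}(T))+\mathcal{H}^2_{\odelta}\Lambda^k(T)$. By Lemma~\ref{lem:surjecdde}, $\odelta_k$ maps $\star\okappa_T\star(\mathcal{P}_0\Lambda^{k-1}(T))$ bijectively onto the constant forms $\mathcal{P}_0\Lambda^{k-1}(T)$ and maps $\mathcal{H}^2_{\odelta}\Lambda^k(T)$ bijectively onto $\okappa_T(\mathcal{P}_0\Lambda^k(T))$. Forms in $\okappa_T(\mathcal{P}_0\Lambda^k(T))$ are linear in the mean-free coordinates $\tilde{x}^j$, hence integrate to zero over $T$, so the two image spaces meet only in $\{0\}$; this shows at once that the sum defining $V$ is direct and that $\odelta_k$ is injective on $V$. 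On a finite-dimensional space, injectivity of $\odelta_k$ makes $\fmu\mapsto\|\odelta_k\fmu\|_{L^2\Lambda^{k-1}(T)}$ a norm, equivalent to $\|\cdot\|_{L^2\Lambda^k(T)}$, so $\|\fmu\|_{L^2\Lambda^k(T)}\leqslant C(T)\,\|\odelta_k\fmu\|_{L^2\Lambda^{k-1}(T)}$ already holds on each of the two summands.

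The delicate point is extracting the factor $h_T$ with a constant depending on $T$ only through its shape regularity, and I would handle this summand by summand (which is why the splitting is convenient). Each summand consists of polynomial $k$-forms homogeneous of a fixed degree in $\tilde{x}$ up to the mean-correction constants (degree $1$ for $\star\okappa_T\star(\mathcal{P}_0\Lambda^{k-1})$, degree $2$ for $\mathcal{H}^2_{\odelta}\Lambda^k(T)$, with the corrections $c^{\ixalpha_j}$ scaling like $h_T^2$), while $\odelta_k$ lowers that degree by one. Performing the dilation $T\mapsto\lambda T$---under which $\tilde{x}\mapsto\lambda\tilde{x}$, $\int_{\lambda T}=\lambda^{n}\int_T$, and $\odelta_k$ gains a factor $\lambda^{-1}$---and collecting powers shows that on each summand the quotient $\|\fmu\|_{L^2\Lambda^k}/\|\odelta_k\fmu\|_{L^2\Lambda^{k-1}}$ scales exactly like $\lambda$, whence $C(T)=C_{k,n}h_T$ with $C_{k,n}$ governed by the shape of $T$ alone. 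To reassemble, write $\fmu=\fmu_1+\fmu_2$ along the direct sum; then $\odelta_k\fmu_1$ is a constant form while $\odelta_k\fmu_2\in\okappa_T(\mathcal{P}_0\Lambda^k(T))$ has vanishing integral, so the two are $L^2\Lambda^{k-1}(T)$-orthogonal, giving $\|\odelta_k\fmu_1\|^2+\|\odelta_k\fmu_2\|^2=\|\odelta_k\fmu\|^2$, and the triangle inequality on the left yields $\|\fmu\|_{L^2\Lambda^k(T)}\leqslant C_{k,n}h_T(\|\odelta_k\fmu_1\|+\|\odelta_k\fmu_2\|)\leqslant\sqrt{2}\,C_{k,n}h_T\,\|\odelta_k\fmu\|_{L^2\Lambda^{k-1}(T)}$.

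Finally, for the $\od^k$-inequality I would use the Hodge star: $\star$ is an $L^2$-isometry with $\odelta_{n-k}\star=\pm\,\star\od^k$, and from the explicit generators it carries $\okappa_T(\mathcal{P}_0\Lambda^{k+1}(T))+\mathcal{H}^2_{\od}\Lambda^k(T)$ onto $\star\okappa_T\star(\mathcal{P}_0\Lambda^{n-k-1}(T))+\mathcal{H}^2_{\odelta}\Lambda^{n-k}(T)$; applying the first inequality with $k$ replaced by $n-k$ to $\star\fmu$ then gives the claim. I expect the main obstacle to be nothing more than the scaling bookkeeping of the dilation step---verifying that the exponent of $h_T$ is exactly one and that no dependence on the size of $T$ (as opposed to its shape) survives; everything else is immediate from Lemma~\ref{lem:surjecdde} and the explicit description of the spaces.
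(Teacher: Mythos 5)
The paper does not prove this lemma at all --- it is quoted verbatim from \cite{Zhang.S2022primalddelta-arxiv} --- so there is no in-paper argument to compare against; judged on its own, your proof is sound and is essentially the natural one. The key structural facts you use are all correct: by Lemma \ref{lem:surjecdde}, $\odelta_k$ carries the two summands onto $\mathcal{P}_0\Lambda^{k-1}(T)$ and $\okappa_T(\mathcal{P}_0\Lambda^k(T))$ respectively; the latter has coefficients $\pm\tilde{x}^{\ixalpha_j}$ with zero mean on $T$, so the two images intersect trivially and are in fact $L^2\Lambda^{k-1}(T)$-orthogonal, which both makes the sum direct (hence $\odelta_k$ injective on it) and lets you recombine the summand-wise bounds with only a $\sqrt{2}$ loss; and the Hodge-star reduction of the $\od^k$-inequality to the $\odelta_{n-k}$-inequality is legitimate, since $\star$ is an $L^2$ isometry, $\odelta_{n-k}\star=\pm\star\od^k$, and $\star$ maps $\okappa_T(\mathcal{P}_0\Lambda^{k+1}(T))$ and $\mathcal{H}^2_{\od}\Lambda^k(T)$ onto $\star\okappa_T\star(\mathcal{P}_0\Lambda^{n-k-1}(T))$ and $\mathcal{H}^2_{\odelta}\Lambda^{n-k}(T)$ (the generators $\tilde\fmu_{\od,T}^{\ixalpha}$ go to $\pm\tilde\fmu_{\odelta,T}^{\ixbeta}$). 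The only place where your sketch is thinner than it should be is the claim that the dilation argument alone yields a constant ``governed by the shape of $T$ alone'': dilation gives the exact factor $h_T$, but the spaces involve the Euclidean Hodge star, hence are invariant only under similarities, not general affine maps, so uniformity over all simplices of a given shape-regularity class does not follow from scaling by itself. You need one further (standard) ingredient --- either an explicit computation of the norm-equivalence constants in terms of the moments $\fint_T\tilde{x}^i\tilde{x}^j$, which are controlled by the regularity parameter, or a compactness argument over the set of unit-diameter simplices with bounded shape regularity. This is a routine completion and matches the stated dependence ``on the regularity of $T$'', but as written it is asserted rather than proved.
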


Denote by $\mathbf{P}^k_{0,T}$ the $L^2$ projection onto $\mathcal{P}_0\Lambda^k(T)$.  The lemma follows by Lemma \ref{lem:hfreq} directly. 
\begin{lemma}\label{lem:hfonT}
There exists a constant $C_{k,n}$, depending on the regularity of $T$, such that
\begin{multline}
\|\fmu\|_{L^2\Lambda^k(T)}\leqslant C_{k,n}h_T(\|\odelta_k\fmu\|_{L^2\Lambda^{k-1}(T)}+\|\od^k\fmu\|_{L^2\Lambda^{k-1}(T)}),
\\ 
\mbox{for}\ \fmu\in \star\okappa_T\star(\mathcal{P}_0\Lambda^{k-1}(T))+\okappa_T(\mathcal{P}_0\Lambda^{k+1}(T))+\mathcal{H}^2_{\odelta}\Lambda^k(T)+\mathcal{H}^2_{\od}\Lambda^k(T),
\end{multline}
and
\begin{multline}
\|\fmu-\mathbf{P}^k_{0,T}\fmu\|_{L^2\Lambda^k(T)}\leqslant C_{k,n}h_T(\|\odelta_k\fmu\|_{L^2\Lambda^{k-1}(T)}+\|\od^k\fmu\|_{L^2\Lambda^{k-1}(T)}),
\\ 
\mbox{for}\ \fmu\in \mathcal{P}_0\Lambda^k(T)+\star\okappa_T\star(\mathcal{P}_0\Lambda^{k-1}(T))+\okappa_T(\mathcal{P}_0\Lambda^{k+1}(T))+\mathcal{H}^2_{\odelta}\Lambda^k(T)+\mathcal{H}^2_{\od}\Lambda^k(T).
\end{multline}
\end{lemma}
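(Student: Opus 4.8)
The plan is to obtain the first inequality directly from Lemma \ref{lem:hfreq} by a decomposition argument, and then to get the second inequality from the first by subtracting the $\mathcal{P}_0\Lambda^k(T)$-component. For the first estimate, write $\fmu = \fmu_\odelta + \fmu_\od$ with $\fmu_\odelta \in \star\okappa_T\star(\mathcal{P}_0\Lambda^{k-1}(T)) + \mathcal{H}^2_{\odelta}\Lambda^k(T)$ and $\fmu_\od \in \okappa_T(\mathcal{P}_0\Lambda^{k+1}(T)) + \mathcal{H}^2_{\od}\Lambda^k(T)$. The key structural facts, recorded just before Lemma \ref{lem:surjecdde}, are that $\od^k$ annihilates $\star\okappa_T\star(\mathcal{P}_0\Lambda^{k-1}(T))$ and $\mathcal{H}^2_{\odelta}\Lambda^k(T)$, while $\odelta_k$ annihilates $\okappa_T(\mathcal{P}_0\Lambda^{k+1}(T))$ and $\mathcal{H}^2_{\od}\Lambda^k(T)$. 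Hence $\od^k\fmu = \od^k\fmu_\od$ and $\odelta_k\fmu = \odelta_k\fmu_\odelta$. Applying the two inequalities of Lemma \ref{lem:hfreq} to $\fmu_\odelta$ and $\fmu_\od$ separately gives $\|\fmu_\odelta\|_{L^2\Lambda^k(T)} \leqslant C_{k,n}h_T\|\odelta_k\fmu\|_{L^2\Lambda^{k-1}(T)}$ and $\|\fmu_\od\|_{L^2\Lambda^k(T)} \leqslant C_{k,n}h_T\|\od^k\fmu\|_{L^2\Lambda^{k+1}(T)}$, and the triangle inequality $\|\fmu\|_{L^2\Lambda^k(T)} \leqslant \|\fmu_\odelta\|_{L^2\Lambda^k(T)} + \|\fmu_\od\|_{L^2\Lambda^k(T)}$ finishes the first claim (noting that the superscript on the norm of $\od^k\fmu$ in the statement should read $k+1$).

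For the second estimate, given $\fmu$ in the larger space $\mathcal{P}_0\Lambda^k(T) + \star\okappa_T\star(\mathcal{P}_0\Lambda^{k-1}(T)) + \okappa_T(\mathcal{P}_0\Lambda^{k+1}(T)) + \mathcal{H}^2_{\odelta}\Lambda^k(T) + \mathcal{H}^2_{\od}\Lambda^k(T)$, observe that $\fmu - \mathbf{P}^k_{0,T}\fmu$ lies in the subspace treated in the first part plus possibly the orthogonal complement of $\mathcal{P}_0\Lambda^k(T)$ within that subspace. More precisely, decompose $\fmu = \fmu_0 + \fmu_\sharp$ with $\fmu_0 \in \mathcal{P}_0\Lambda^k(T)$ and $\fmu_\sharp$ in the four remaining summands; then $\mathbf{P}^k_{0,T}\fmu_\sharp$ is again a constant form, so $\fmu - \mathbf{P}^k_{0,T}\fmu = \fmu_\sharp - \mathbf{P}^k_{0,T}\fmu_\sharp$, which again belongs to $\star\okappa_T\star(\mathcal{P}_0\Lambda^{k-1}(T)) + \okappa_T(\mathcal{P}_0\Lambda^{k+1}(T)) + \mathcal{H}^2_{\odelta}\Lambda^k(T) + \mathcal{H}^2_{\od}\Lambda^k(T)$ \emph{provided} that $\mathbf{P}^k_{0,T}$ maps each of these four spaces into itself — which it does not literally, since $\mathcal{P}_0\Lambda^k(T)$ is not contained in them, but the constant $\mathbf{P}^k_{0,T}\fmu_\sharp$ can simply be absorbed by noting that $\od^k$ and $\odelta_k$ kill constants, so both sides are unchanged upon replacing $\fmu_\sharp$ by $\fmu_\sharp - \mathbf{P}^k_{0,T}\fmu_\sharp$ and one applies the first inequality to $\fmu_\sharp$ directly, using $\|\fmu_\sharp - \mathbf{P}^k_{0,T}\fmu_\sharp\|_{L^2\Lambda^k(T)} \leqslant \|\fmu_\sharp\|_{L^2\Lambda^k(T)}$ since $\mathbf{P}^k_{0,T}$ is an orthogonal projection.

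The only genuinely delicate point is the well-definedness and internal directness needed to make the decompositions above legitimate: one must be sure that the sums of subspaces appearing in the statement behave as expected under $\od^k$ and $\odelta_k$, i.e. that there is no hidden cancellation forcing, say, a nonconstant element of $\star\okappa_T\star(\mathcal{P}_0\Lambda^{k-1}(T)) + \mathcal{H}^2_{\odelta}\Lambda^k(T)$ to coincide with one of $\okappa_T(\mathcal{P}_0\Lambda^{k+1}(T)) + \mathcal{H}^2_{\od}\Lambda^k(T)$ in a way that spoils the splitting $\fmu = \fmu_\odelta + \fmu_\od$. This is controlled by Lemma \ref{lem:surjecdde}, which identifies the images of $\od^k$ and $\odelta_k$ on these building blocks precisely; I expect this bookkeeping, rather than any analytic estimate, to be the main thing to check carefully, and the paper's remark that "the lemma follows by Lemma \ref{lem:hfreq} directly" indicates it is indeed routine once the decomposition is set up.
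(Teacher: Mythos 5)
Your proof is correct and is essentially the paper's own argument: the paper merely states that the lemma ``follows by Lemma \ref{lem:hfreq} directly,'' and your splitting $\fmu=\fmu_\odelta+\fmu_\od$ (so that $\od^k\fmu=\od^k\fmu_\od$, $\odelta_k\fmu=\odelta_k\fmu_\odelta$), the application of Lemma \ref{lem:hfreq} to each piece plus the triangle inequality, and, for the second bound, dropping the constant part and using that $\mathbf{P}^k_{0,T}$ is an orthogonal projection while $\od^k,\odelta_k$ kill constants, is exactly that direct argument (and you are right that the exponent in the $\od^k\fmu$ norm should read $k+1$). One small correction of attribution only: the annihilation facts $\od^k\bigl(\star\okappa_T\star(\mathcal{P}_0\Lambda^{k-1}(T))\bigr)=0$ and $\odelta_k\bigl(\okappa_T(\mathcal{P}_0\Lambda^{k+1}(T))\bigr)=0$ are not literally recorded before Lemma \ref{lem:surjecdde} (only the analogous identities for $\mathcal{H}^2_{\od}\Lambda^k(T)$ and $\mathcal{H}^2_{\odelta}\Lambda^k(T)$ are), but they are standard Koszul-operator identities and your use of them is sound.
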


\subsection{Conforming and nonconforming Whitney forms for exterior differential forms}

The space of Whitney forms, the lowest-degree trimmed polynomial $k$-forms, associated with the operator $\od^k$ is denoted by (\cite{Arnold.D;Falk.R;Winther.R2006acta,Arnold.D;Falk.R;Winther.R2010bams,Arnold.D2018feec})
$$
\mathcal{P}^-_1\Lambda^k=\mathcal{P}_0\Lambda^k+\okappa(\mathcal{P}_0\Lambda^{k+1}).
$$
We denote the space associated with the operator $\odelta_k$ by
$$
\mathcal{P}^{*,-}_1\Lambda^k:=\star(\mathcal{P}^-_1\Lambda^{n-k})=\mathcal{P}_0\Lambda^k+\star\okappa\star(\mathcal{P}_0\Lambda^{k-1}).
$$

For $\Xi$ a subdomain of $\Omega$, we denote by $E_\Xi^\Omega$ the extension from $L^1_{\rm loc}(\Xi)$ to $L^1_{\rm loc}(\Omega)$, the spaces of locally integrable functions, respectively. Namely, 
$$
E_\Xi^\Omega: L^1_{\rm loc}(\Xi)\to L^1_{\rm loc}(\Omega),\ \ 
E_\Xi^\Omega v=\left\{\begin{array}{ll}
v,&\mbox{on}\,\Xi,
\\
0,&\mbox{else},
\end{array}\right.
\ \mbox{for}\,v\in L^1_{\rm loc}(\Xi).
$$
We use the same notation $L^1_{\rm loc}$ for both scalar and non-scalar locally integrable functions, and, here and in the sequel, use the same notation $E_\Xi^\Omega$ for both scalar and non-scalar functions. 

Let $\mathcal{G}_\Omega=\{\mathcal{G}_h\}$ be a set of shape regular simplicial subdivisions of $\Omega$. On a $\mathcal{G}_h$, define formally the product of a set of function spaces $\left\{\Upsilon(T)\right\}_{T\in\mathcal{G}_h}$ defined cell by cell such that $E_T^\Omega\Upsilon(T)$ for all $T\in\mathcal{G}_h$ are compatible, 
$$
\prod_{T\in\mathcal{G}_h}\Upsilon(T):=\sum_{T\in\mathcal{G}_h} E_T^\Omega\Upsilon(T),
$$
and the summation is direct. The $\displaystyle \prod_{T\in\mathcal{G}_h}\Upsilon(T)$ defined this way is essentially the tensor product of all $\Upsilon(T)$. Denote
$$
\displaystyle\mathcal{P}^-_1\Lambda^k(\mathcal{G}_h):=\prod_{T\in\mathcal{G}_h}\mathcal{P}^-_1\Lambda^k(T); \ \displaystyle\mathcal{P}^{*,-}_1\Lambda^k(\mathcal{G}_h):=\prod_{T\in\mathcal{G}_h}\mathcal{P}^{*,-}_1\Lambda^k(T); \ \displaystyle\mathcal{P}_0\Lambda^k(\mathcal{G}_h)=\prod_{T\in\mathcal{G}_h}\mathcal{P}_0\Lambda^k(T).
$$


Denote the conforming finite element spaces with Whitney forms by
$$
\fW_h\Lambda^k:=\mathcal{P}^-_1(\mathcal{G}_h)\cap H\Lambda^k,\ \ \fW_{h0}\Lambda^k:=\mathcal{P}^-_1(\mathcal{G}_h)\cap H_0\Lambda^k,
$$
and 
$$
\fW^*_h\Lambda^k:=\mathcal{P}^{*,-}_1(\mathcal{G}_h)\cap H^*\Lambda^k,\ \ \fW^*_{h0}\Lambda^k:=\mathcal{P}^{*,-}_1(\mathcal{G}_h)\cap H^*_0\Lambda^k.
$$
Then 
$$
\fW^*_h\Lambda^k=\star \fW_h\Lambda^{n-k},\ \ \mbox{and}\ \ \fW^*_{h0}\Lambda^k=\star \fW_{h0}\Lambda^{n-k}.
$$
Denote 
$$
\hf_h\Lambda^k:=\N(\od^k,\fW_h\Lambda^k)\omp \R(\od^{k-1},\fW_h\Lambda^{k-1}),\ \ \ \hf_{h0}\Lambda^k:=\N(\od^k,\fW_{h0}\Lambda^k)\omp \R(\od^{k-1},\fW_{h0}\Lambda^{k-1}),
$$
$$
\hf^*_h\Lambda^k:=\N(\odelta_k,\fW^*_h\Lambda^k)\omp \R(\odelta_{k+1},\fW^*_h\Lambda^{k+1}),\ \mbox{and}\ \hf^*_{h0}\Lambda^k:=\N(\odelta_k,\fW^*_{h0}\Lambda^k)\omp \R(\odelta_{k+1},\fW^*_{h0}\Lambda^{k+1}).
$$
Then
$$
\hf_h\Lambda^k=\hf^*_{h0}\Lambda^k,\ \ \mbox{and}\ \ \hf_{h0}\Lambda^k=\hf^*_h\Lambda^k.
$$
\begin{lemma}(\cite{Arnold.D2018feec})
$\hf_h\Lambda^k$ and $\hf_{h0}\Lambda^k$ are isomorphic to $\hf\Lambda^k$ and $\hf_0\Lambda^k$, respectively.
\end{lemma}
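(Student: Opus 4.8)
The plan is to reduce the statement to a count of Betti numbers. Both $\hf\Lambda^k$ and $\hf_h\Lambda^k$ are finite-dimensional real vector spaces: the discrete one trivially, and the continuous one because $\Omega$ is a bounded Lipschitz domain, so that the $L^2$ Hodge theory applies (the relevant ranges being closed). Moreover the very definitions exhibit orthogonal decompositions $\N(\od^k,H\Lambda^k)=\R(\od^{k-1},H\Lambda^{k-1})\opp\hf\Lambda^k$ and $\N(\od^k,\fW_h\Lambda^k)=\R(\od^{k-1},\fW_h\Lambda^{k-1})\opp\hf_h\Lambda^k$, so that $\hf\Lambda^k$ (resp.\ $\hf_h\Lambda^k$) is, via $L^2$-orthogonal projection, canonically isomorphic to the $k$-th cohomology of the $L^2$ de Rham complex (resp.\ of the Whitney complex $(\fW_h\Lambda^\bullet,\od)$); the same holds with homogeneous boundary conditions. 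Hence the whole lemma follows once I show $\dim\hf\Lambda^k=\dim\hf_h\Lambda^k$ and $\dim\hf_0\Lambda^k=\dim\hf_{h0}\Lambda^k$, since finite-dimensional spaces of equal dimension are isomorphic.

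Next I would compute both sides topologically. On the continuous side Hodge--de Rham theory gives $\dim\hf\Lambda^k=b_k(\Omega)$, the $k$-th Betti number, and $\dim\hf_0\Lambda^k=b_k(\Omega,\partial\Omega)=b_{n-k}(\Omega)$, consistent with the Poincar\'e--Lefschetz duality $\hf\Lambda^{n-k}=\star\hf_0\Lambda^k$ recalled above. On the discrete side the key classical input is Whitney's theorem: the de Rham map $\fomega\mapsto\big(\sigma\mapsto\int_\sigma\fomega\big)$ is an isomorphism of cochain complexes from $(\fW_h\Lambda^\bullet,\od)$ onto the simplicial cochain complex $C^\bullet(\mathcal{G}_h;\mathbb{R})$, with inverse the Whitney interpolant; hence the cohomology of $\fW_h\Lambda^\bullet$ is the simplicial, and therefore topological, cohomology of $|\mathcal{G}_h|=\bar\Omega$, giving $\dim\hf_h\Lambda^k=b_k(\Omega)$. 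The homogeneous case is identical with $C^\bullet(\mathcal{G}_h;\mathbb{R})$ replaced by the subcomplex of cochains vanishing on boundary simplices, whose cohomology is $H^\bullet(\Omega,\partial\Omega;\mathbb{R})$, so $\dim\hf_{h0}\Lambda^k=b_k(\Omega,\partial\Omega)$. Matching the two lists finishes the proof. For later use in the error analysis one actually wants the isomorphism realized by a \emph{concrete} operator; there I would instead invoke a bounded commuting cochain projection $\pi_h\colon H\Lambda^\bullet\to\fW_h\Lambda^\bullet$ (and its trace-preserving analogue onto $\fW_{h0}\Lambda^\bullet$) as constructed in \cite{Arnold.D;Falk.R;Winther.R2010bams,Arnold.D2018feec}: since $\pi_h$ fixes $\fW_h$ and commutes with $\od$, it splits the inclusion $\fW_h\Lambda^\bullet\hookrightarrow H\Lambda^\bullet$ on cohomology, so the inclusion induces an injection $\hf_h\Lambda^k\hookrightarrow\hf\Lambda^k$, which the dimension count above promotes to an isomorphism.

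The only genuinely technical ingredients are the two I would cite rather than reprove: Whitney's identification of the cohomology of lowest-order trimmed forms with (relative) simplicial cohomology, and --- if one insists on an $h$-uniformly $L^2$-bounded, $\od$-commuting projection onto $\fW_h\Lambda^k$ --- the regularized interpolation of Christiansen--Winther / Sch\"oberl (the naive canonical Whitney interpolant commutes with $\od$ and fixes $\fW_h$ but is not bounded on all of $H\Lambda^k$, so a smoothing step is needed). I expect the subtlest point to be the homogeneous-boundary version, where the smoothing has to be localized away from $\partial\Omega$ so as not to spoil the vanishing trace, exactly as in the FEEC treatment of the $H_0\Lambda^k$ complex in \cite{Arnold.D;Falk.R;Winther.R2006acta}; but for the bare isomorphism claimed in the lemma the dimension count of the first paragraph already suffices and sidesteps all of this.
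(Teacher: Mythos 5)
Your proof is correct, and there is essentially nothing in the paper to compare it with: the lemma is stated with a citation to \cite{Arnold.D2018feec} and no proof is given there. Your route --- identify $\hf_h\Lambda^k$ (resp.\ $\hf_{h0}\Lambda^k$) with the $k$-th cohomology of the Whitney complex via the discrete Hodge decomposition, invoke Whitney's de~Rham theorem to equate its dimension with $b_k(\Omega)$ (resp.\ with the relative Betti number $b_k(\Omega,\partial\Omega)$), and match these with $\dim\hf\Lambda^k$ and $\dim\hf_0\Lambda^k$ supplied by $L^2$ Hodge theory on a bounded Lipschitz domain --- is exactly the standard FEEC argument found in the cited reference, and the dimension count does suffice because the lemma claims only an abstract isomorphism. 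Your closing caveat is also accurate: the smoothed, $h$-uniformly bounded commuting projections are needed only for quantitative statements (closeness of the discrete and continuous harmonic spaces, error bounds), which this paper imports separately through Lemma \ref{lem:classicalmix} and the results of \cite{Arnold.D;Falk.R;Winther.R2006acta}, not through this lemma.
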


Following \cite{Zhang.S2022padao-arxiv}, denote the accompanied-by-conforming (ABC) finite element spaces with Whitney forms by
$$
\fW^{\rm abc}_h\Lambda^k:=\left\{\fmu_h\in \mathcal{P}^-_1\Lambda^k(\mathcal{G}_h):\sum_{T\in\mathcal{G}_h}\langle\od^k\fmu_h,\feta_h\rangle_{L^2\Lambda^{k+1}(T)}-\langle\fmu_h,\odelta_{k+1}\feta_h\rangle_{L^2\Lambda^k(T)}=0,\ \forall\,\feta_h\in \fW^*_{h0}\Lambda^{k+1}\right\},
$$
$$
\fW^{\rm abc}_{h0}\Lambda^k:=\left\{\fmu_h\in \mathcal{P}^-_1\Lambda^k(\mathcal{G}_h):\sum_{T\in\mathcal{G}_h}\langle\od^k\fmu_h,\feta_h\rangle_{L^2\Lambda^{k+1}(T)}-\langle\fmu_h,\odelta_{k+1}\feta_h\rangle_{L^2\Lambda^k(T)}=0,\ \forall\,\feta_h\in \fW^*_h\Lambda^{k+1}\right\},
$$
$$
\fW^{*,\rm abc}_h\Lambda^k:=\left\{\fmu_h\in \mathcal{P}^{*,-}_1\Lambda^k(\mathcal{G}_h):\sum_{T\in\mathcal{G}_h}\langle\odelta_k\fmu_h,\ftau_h\rangle_{L^2\Lambda^{k-1}(T)}-\langle\fmu_h,\od^{k-1}\ftau_h\rangle_{L^2\Lambda^k(T)}=0,\ \forall\,\ftau_h\in \fW_{h0}\Lambda^{k-1}\right\},
$$
and
$$
\fW^{*,\rm abc}_{h0}\Lambda^k:=\left\{\fmu_h\in \mathcal{P}^{*,-}_1\Lambda^k(\mathcal{G}_h):\sum_{T\in\mathcal{G}_h}\langle\odelta_k\fmu_h,\ftau_h\rangle_{L^2\Lambda^{k-1}(T)}-\langle\fmu_h,\od^{k-1}\ftau_h\rangle_{L^2\Lambda^k(T)}=0,\ \forall\,\ftau_h\in \fW_h\Lambda^{k-1}\right\}.
$$
Note that, $\fW^{\rm abc}_{h(0)}\Lambda^0$ and $\fW^{*,\rm abc}_{h(0)}\Lambda^n$ are basically the famous lowest-degree Crouzeix-Raviart element spaces \cite{Crouzeix.M;Raviart.P1973}. Besides, we have, for example, 
$$
\fW_h\Lambda^k=\left\{\fmu_h\in\mathcal{P}^-_1\Lambda^k(\mathcal{G}_h):\sum_{T\in\mathcal{G}_h}\langle\od^k\fmu_h,\feta_h\rangle_{L^2\Lambda^{k+1}(T)}-\langle\fmu_h,\odelta_{k+1}\feta_h\rangle_{L^2\Lambda^k(T)}=0,\ \forall\,\feta_h\in \fW^{*,\rm abc}_{h0}\Lambda^{k+1}\right\}.
$$

\begin{lemma}(\cite{Arnold.D;Falk.R;Winther.R2006acta}\cite{Zhang.S2022padao-arxiv})\label{lem:piwfs}
There exists a constant $C_{k,n}$, depending on the regularity of $\mathcal{G}_h$, such that 
$$
\icr(\od^k,\fW_h\Lambda^k)\leqslant C_{k,n},\ \ \mbox{and},\ \ \icr(\od^k,\fW_{h0}\Lambda^k)\leqslant C_{k,n},
$$
and 
$$
\icr(\od^k_h,\fW^{\rm abc}_h\Lambda^k)\leqslant C_{k,n},\ \ \mbox{and},\ \ \icr(\od^k_h,\fW^{\rm abc}_{h0}\Lambda^k)\leqslant C_{k,n}.
$$
\end{lemma}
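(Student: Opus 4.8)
The plan is to split the four indices of closed range into the two conforming ones, handled by the classical finite element exterior calculus argument, and the two nonconforming (ABC) ones, reduced to the conforming ones by the quantified closed range theorem (Theorem~\ref{thm:chpi}). For the conforming estimates I would argue as follows: let $\fmu_h$ lie in the $L^2$-orthogonal complement of $\N(\od^k,\fW_h\Lambda^k)$ inside $\fW_h\Lambda^k$; the continuous Poincar\'e inequality on $H\Lambda^k(\Omega)$, whose constant $C_\Omega$ depends only on $\Omega$, produces $\fomega\in H\Lambda^k$ orthogonal to $\N(\od^k,H\Lambda^k)$ with $\od^k\fomega=\od^k\fmu_h$ and $\|\fomega\|_{H\Lambda^k}\leqslant C_\Omega\|\od^k\fmu_h\|_{L^2\Lambda^{k+1}}$, where we use that $\od^k\fmu_h\in\mathcal{P}_0\Lambda^{k+1}(\mathcal{G}_h)\cap H\Lambda^{k+1}\subset\fW_h\Lambda^{k+1}$ lies in the range; applying the uniformly bounded commuting cochain projection $\pi_h$ onto $\fW_h\Lambda^\bullet$ from \cite{Arnold.D;Falk.R;Winther.R2006acta}, and invoking $\od^k\pi_h\fomega=\pi_h\od^k\fomega=\pi_h\od^k\fmu_h=\od^k\fmu_h$, one gets $\fmu_h-\pi_h\fomega\in\N(\od^k,\fW_h\Lambda^k)$, hence $\|\fmu_h\|_{L^2\Lambda^k}^2=\langle\fmu_h,\pi_h\fomega\rangle_{L^2\Lambda^k}$ and therefore $\|\fmu_h\|_{L^2\Lambda^k}\leqslant\|\pi_h\fomega\|_{L^2\Lambda^k}\leqslant C\|\fomega\|_{H\Lambda^k}\leqslant C\,C_\Omega\|\od^k\fmu_h\|_{L^2\Lambda^{k+1}}$. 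The estimate for $\fW_{h0}\Lambda^k$ is identical, run on the subcomplex with essential boundary conditions.

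For the ABC estimates I would use that, by \cite{Zhang.S2022padao-arxiv}, the pairs $\bigl[(\od^k_h,\fW^{\rm abc}_h\Lambda^k),(\odelta_{k+1},\fW^*_{h0}\Lambda^{k+1})\bigr]$ and $\bigl[(\od^k_h,\fW^{\rm abc}_{h0}\Lambda^k),(\odelta_{k+1},\fW^*_h\Lambda^{k+1})\bigr]$ are partially adjoint based on the broken base operator pair $\bigl[(\od^k_h,\mathcal{P}^-_1\Lambda^k(\mathcal{G}_h)),(\odelta_{k+1},\mathcal{P}^{*,-}_1\Lambda^{k+1}(\mathcal{G}_h))\bigr]$, and that for this base pair the space $\uxM$ of \eqref{eq:uxm} works out, after an integration by parts of the interelement and boundary trace terms, to be the conforming space $\fW_{h0}\Lambda^k$. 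Theorem~\ref{thm:chpi} then yields
\begin{multline*}
\icr(\od^k_h,\fW^{\rm abc}_h\Lambda^k)\leqslant(1+\alpha^{-1})\,\icr(\od^k_h,\mathcal{P}^-_1\Lambda^k(\mathcal{G}_h))\\+\alpha^{-1}\,\icr(\odelta_{k+1},\fW^*_{h0}\Lambda^{k+1})+\icr(\od^k,\fW_{h0}\Lambda^k),
\end{multline*}
and the analogous bound for $\fW^{\rm abc}_{h0}\Lambda^k$ with $\fW^*_h\Lambda^{k+1}$ in place of $\fW^*_{h0}\Lambda^{k+1}$. Each term on the right I would bound separately: $\icr(\od^k_h,\mathcal{P}^-_1\Lambda^k(\mathcal{G}_h))\leqslant C_{k,n}h\leqslant C_{k,n}\operatorname{diam}(\Omega)$, since the $L^2(T)$-orthogonal complement of $\N(\od^k,\mathcal{P}^-_1\Lambda^k(T))=\mathcal{P}_0\Lambda^k(T)$ is precisely $\okappa_T(\mathcal{P}_0\Lambda^{k+1}(T))$, on which Lemma~\ref{lem:hfreq} gives $\|\fmu\|_{L^2\Lambda^k(T)}\leqslant C_{k,n}h_T\|\od^k\fmu\|_{L^2\Lambda^{k+1}(T)}$; next, $\icr(\odelta_{k+1},\fW^*_{h0}\Lambda^{k+1})=\icr(\od^{n-k-1},\fW_{h0}\Lambda^{n-k-1})$ by the Hodge-star isometry $\fW^*_{h0}\Lambda^{k+1}=\star\fW_{h0}\Lambda^{n-k-1}$, which is the conforming estimate already established (and likewise $\icr(\odelta_{k+1},\fW^*_h\Lambda^{k+1})=\icr(\od^{n-k-1},\fW_h\Lambda^{n-k-1})$); and $\icr(\od^k,\fW_{h0}\Lambda^k)$ is once more the conforming estimate.

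The main obstacle, and the only place where genuinely new work is needed, is the uniform-in-$h$ lower bound $\alpha\geqslant c_{k,n}>0$ (together with the companion $\beta$, which is also what guarantees $\icr(\odelta_{k+1},\fW^*_{h0}\Lambda^{k+1})<\infty$ so that Theorem~\ref{thm:chpi} is applicable). This amounts to checking that the two base operator pairs above really are base operator pairs with non-degenerate twisted parts: here $\N(\od^k_h,\xM_{\rm B})$ and $\R(\odelta_{k+1},\yN_{\rm B})$ are isomorphic copies of the spaces of discrete harmonic Whitney forms, so the lower bound encodes the nontrivial topology of $\Omega$ and is exactly what a careless choice of discrete harmonic spaces would forfeit as $h\to0$. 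I would derive it from the finite element exterior calculus fact that $\hf_h\Lambda^k$ and $\hf_{h0}\Lambda^k$ are isomorphic to $\hf\Lambda^k$ and $\hf_0\Lambda^k$ through $L^2$-stable maps, combined with the elementary local estimates of Lemmas~\ref{lem:surjecdde}--\ref{lem:hfonT}; this is the verification performed in \cite{Zhang.S2022padao-arxiv}, and in the conforming route of the first paragraph the very same difficulty is what is packaged inside the construction of the uniformly bounded commuting projection $\pi_h$.
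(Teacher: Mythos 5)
The paper never proves this lemma: it is quoted from \cite{Arnold.D;Falk.R;Winther.R2006acta} and \cite{Zhang.S2022padao-arxiv}, so your attempt can only be judged against those sources. Your first paragraph (the conforming bounds for $\fW_h\Lambda^k$ and $\fW_{h0}\Lambda^k$ via the continuous Poincar\'e inequality and an $L^2$-bounded commuting cochain projection) is precisely the Arnold--Falk--Winther argument and is correct; the overall skeleton of your ABC half (Theorem \ref{thm:chpi} over a broken base pair, the Hodge-star identity $\icr(\odelta_{k+1},\fW^*_{h0}\Lambda^{k+1})=\icr(\od^{n-k-1},\fW_{h0}\Lambda^{n-k-1})$, and the local Koszul estimate $\icr(\od^k_h,\mathcal{P}^-_1\Lambda^k(\mathcal{G}_h))\leqslant C_{k,n}h$) is also the route of \cite{Zhang.S2022padao-arxiv}.

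However, two concrete claims in the ABC half are wrong, and they are linked. First, $\uxM\neq\fW_{h0}\Lambda^k$. In \eqref{eq:uxm} the test space is the \emph{fully broken} $\tyN=\mathcal{P}^{*,-}_1\Lambda^{k+1}(\mathcal{G}_h)$, so one may test with $\feta$ supported on a single simplex $T$; elementwise integration by parts turns the defining condition into $\int_{\partial T}\mathrm{tr}\,\fmu\wedge\mathrm{tr}(\star\feta)=0$ for every $T$ and every $\feta\in\mathcal{P}^{*,-}_1\Lambda^{k+1}(T)$, which is a purely local constraint. Hence $\uxM$ is a product of local subspaces (for Whitney forms trivial or nearly so), and in fact $\fW_{h0}\Lambda^k\not\subset\uxM$, since a conforming $\fmu_h$ generally has $\langle\od^k\fmu_h,\feta\rangle_{L^2\Lambda^{k+1}(T)}-\langle\fmu_h,\odelta_{k+1}\feta\rangle_{L^2\Lambda^k(T)}\neq0$ against a one-element test. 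Second, and as a consequence, the constant $\alpha$ of this broken base pair does not ``encode the topology of $\Omega$'' and cannot be obtained from the isomorphism $\hf_h\Lambda^k\cong\hf\Lambda^k$: with $\uxM$ and $\uyN$ local, the twisted part $(\xM_{\rm B},\yN_{\rm B})$ is essentially the whole broken pair, $\N(\od^k_h,\xM_{\rm B})$ and $\R(\odelta_{k+1,h},\yN_{\rm B})$ both sit inside the piecewise-constant space, and $\alpha,\beta$ are cellwise inf-sup constants bounded below using only shape regularity. This is exactly how the present paper itself deploys the framework in Section \ref{sec:fes}: for its analogous broken base pair it notes that the twisted part is the pair itself, takes $\uxM=\{0\}$ and $\alpha=1$, and proves the cellwise inf-sup bound in Lemma \ref{lem:isocellT}. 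The topological content of the final estimate is carried entirely by $\icr(\odelta_{k+1},\fW^*_{h0}\Lambda^{k+1})$, i.e. by the conforming bound of your first paragraph, not by $\alpha$. So the ``main obstacle'' you identify is aimed at the wrong object; what is actually needed is the local verification of the base-pair property and of $\alpha,\beta$, after which your displayed inequality (with $\icr(\od^k_h,\uxM)$, a harmless or vanishing local term, in place of $\icr(\od^k,\fW_{h0}\Lambda^k)$) closes the argument as in \cite{Zhang.S2022padao-arxiv}.
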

Here and in the sequel, we use the subscript ``$\cdot_h$" to denote the piecewise operation on $\mathcal{G}_h$.
\begin{remark}
By Lemma \ref{lem:piwfs}, it follows that 
$$
\icr(\odelta_k,\fW^*_h\Lambda^k)\leqslant C_{n-k,n},\ \ \mbox{and},\ \ \icr(\odelta_k,\fW^*_{h0}\Lambda^k)\leqslant C_{n-k,n},
$$
and
$$
\icr(\odelta_{k,h},\fW^{*,\rm abc}_h\Lambda^k)\leqslant C_{n-k,n},\ \ \mbox{and},\ \ \icr(\odelta_{k,h},\fW^{*,\rm abc}_{h0}\Lambda^k)\leqslant C_{n-k,n}.
$$
\end{remark}

Denote
$$
\hf^{*,\rm abc}_h\Lambda^k:=\N(\odelta_{k,h},\fW^{*,\rm abc}_h\Lambda^k)\omp \R(\odelta_{k-1,h},\fW^{*,\rm abc}_h\Lambda^{k-1}),\ \ \hf^{\rm abc}_h\Lambda^k:=\N(\od^k_h,\fW^{\rm abc}_h\Lambda^k)\omp \R(\od^{k-1}_h,\fW^{\rm abc}_h\Lambda^{k-1}),
$$
$$
\hf^{*,\rm abc}_{h0}\Lambda^k:=\N(\odelta_{k,h},\fW^{*,\rm abc}_{h0}\Lambda^k)\omp \R(\odelta_{k-1,h},\fW^{*,\rm abc}_{h0}\Lambda^{k-1}),\ \mbox{and},\ \hf^{\rm abc}_{h0}\Lambda^k:=\N(\od^k_h,\fW^{\rm abc}_{h0}\Lambda^k)\omp \R(\od^{k-1}_h,\fW^{\rm abc}_{h0}\Lambda^{k-1}).
$$

\begin{lemma}[Discrete Poincar\'e-Lefschetz duality, \cite{Zhang.S2022padao-arxiv}]
$$
\hf_h\Lambda^k=\star\hf^{\rm abc}_{h0}\Lambda^{n-k},\ \ \mbox{and}\ \ \hf_{h0}\Lambda^k=\star\hf^{\rm abc}_h\Lambda^{n-k}.
$$
\end{lemma}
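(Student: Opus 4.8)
The plan is to reduce both identities, via the Hodge star, to the coincidence of two spaces of discrete harmonic forms, and then to establish that coincidence by a local polynomial computation together with the ``dual'' characterisation of the conforming Whitney spaces recalled above.

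First I would dispose of the Hodge star. Since $\star$ is an $L^2$-isometry that carries $\mathcal{P}^-_1\Lambda^{n-k}(\mathcal{G}_h)$ onto $\mathcal{P}^{*,-}_1\Lambda^k(\mathcal{G}_h)$ and intertwines, up to a fixed sign, $\od^{n-k}$ with $\odelta_k$ and $\od^{n-k-1}$ with $\odelta_{k+1}$, and since $\fW^*_h\Lambda^{n-k+1}=\star\fW_h\Lambda^{k-1}$, applying $\star$ term by term to the defining relations of $\fW^{\rm abc}_{h0}\Lambda^{n-k}$ turns them into precisely those of $\fW^{*,\rm abc}_{h0}\Lambda^k$; hence $\star\fW^{\rm abc}_{h0}\Lambda^{n-k}=\fW^{*,\rm abc}_{h0}\Lambda^k$, and, $\star$ preserving kernels, ranges and $L^2$-orthogonality, $\star\hf^{\rm abc}_{h0}\Lambda^{n-k}=\hf^{*,\rm abc}_{h0}\Lambda^k$; the same computation with the boundary conditions interchanged gives $\star\hf^{\rm abc}_h\Lambda^{n-k}=\hf^{*,\rm abc}_h\Lambda^k$. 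It therefore suffices to prove $\hf_h\Lambda^k=\hf^{*,\rm abc}_{h0}\Lambda^k$ and $\hf_{h0}\Lambda^k=\hf^{*,\rm abc}_h\Lambda^k$.

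For the first of these, the key local fact, immediate from $\od^k\okappa=(k+1)\,\mathrm{Id}$ on $\mathcal{P}_0\Lambda^{k+1}(T)$ and $\odelta_k\star\okappa\star=(-1)^{kn-n-1}(n-k+1)\,\mathrm{Id}$ on $\mathcal{P}_0\Lambda^{k-1}(T)$ (Lemma \ref{lem:surjecdde}) together with the direct-sum splittings of the trimmed spaces, is that $\N(\od^k,\mathcal{P}^-_1\Lambda^k(T))=\mathcal{P}_0\Lambda^k(T)=\N(\odelta_k,\mathcal{P}^{*,-}_1\Lambda^k(T))$. Hence $\N(\od^k,\fW_h\Lambda^k)$ and $\N(\odelta_{k,h},\fW^{*,\rm abc}_{h0}\Lambda^k)$ consist of piecewise constant $k$-forms, and $\R(\od^{k-1},\fW_h\Lambda^{k-1})$, $\R(\odelta_{k+1,h},\fW^{*,\rm abc}_{h0}\Lambda^{k+1})$ likewise lie in $\mathcal{P}_0\Lambda^k(\mathcal{G}_h)$. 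Restricting to piecewise constant forms, the characterisation $\fW_h\Lambda^k=\{\fmu_h\in\mathcal{P}^-_1\Lambda^k(\mathcal{G}_h):\sum_T(\langle\od^k\fmu_h,\feta_h\rangle-\langle\fmu_h,\odelta_{k+1}\feta_h\rangle)=0,\ \forall\,\feta_h\in\fW^{*,\rm abc}_{h0}\Lambda^{k+1}\}$ recalled above shows that a piecewise constant $\fmu$ lies in $\fW_h\Lambda^k$ iff $\fmu\perp\R(\odelta_{k+1,h},\fW^{*,\rm abc}_{h0}\Lambda^{k+1})$, while, since $\odelta_k$ annihilates piecewise constants, the defining relation of $\fW^{*,\rm abc}_{h0}\Lambda^k$ shows that such a $\fmu$ lies in $\fW^{*,\rm abc}_{h0}\Lambda^k$ iff $\fmu\perp\R(\od^{k-1},\fW_h\Lambda^{k-1})$. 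Together with the inclusions $\R(\od^{k-1},\fW_h\Lambda^{k-1})\subseteq\N(\od^k,\fW_h\Lambda^k)$ and $\R(\odelta_{k+1,h},\fW^{*,\rm abc}_{h0}\Lambda^{k+1})\subseteq\N(\odelta_{k,h},\fW^{*,\rm abc}_{h0}\Lambda^k)$ (the latter being the complex property of the ABC spaces, obtained by testing the defining relation of a form in $\fW^{*,\rm abc}_{h0}\Lambda^{k+1}$ against $\od^{k-1}$ of a conforming Whitney form and using $\od^k\od^{k-1}=0$), this identifies both $\hf_h\Lambda^k$ and $\hf^{*,\rm abc}_{h0}\Lambda^k$ with
\[
\big\{\fmu\in\mathcal{P}_0\Lambda^k(\mathcal{G}_h):\ \fmu\perp\R(\od^{k-1},\fW_h\Lambda^{k-1})\ \text{and}\ \fmu\perp\R(\odelta_{k+1,h},\fW^{*,\rm abc}_{h0}\Lambda^{k+1})\big\},
\]
proving the identity. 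The second identity follows by the same argument after interchanging the homogeneous and free boundary conditions throughout (the dual characterisation of $\fW_{h0}\Lambda^k$ being the boundary-condition counterpart of the one recalled above).

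The step I expect to demand the most care is the Hodge-star reduction: one has to check that the sign factors produced when $\star$ is commuted past $\od$, $\odelta$ and the $L^2$ pairings are uniform across all terms of each defining relation, so that the vanishing conditions are genuinely preserved, and that the relevant test spaces correspond under $\star$ via $\fW^*_h\Lambda^{n-k+1}=\star\fW_h\Lambda^{k-1}$ and its boundary-condition counterpart. The remainder is routine, given the local identities of Lemma \ref{lem:surjecdde} and the dual characterisations of the conforming Whitney spaces already in the text.
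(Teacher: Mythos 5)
The paper itself gives no proof of this lemma---it is quoted from \cite{Zhang.S2022padao-arxiv}---so there is nothing internal to compare against; judged on its own, your argument is correct and derives the identity from facts the paper does state. The Hodge-star reduction you flagged as delicate does go through: with the convention $\odelta_k=(-1)^{kn}\star\od^{n-k}\star$ and $\star\star=(-1)^{k(n-k)}$ on $k$-forms, the two terms in the defining relation of $\fW^{\rm abc}_{h0}\Lambda^{n-k}$ acquire the \emph{same} sign $(-1)^{n(k+1)+1}$ under $\star$, and $\star\fW^*_h\Lambda^{n-k+1}=\fW_h\Lambda^{k-1}$, so $\star\fW^{\rm abc}_{h0}\Lambda^{n-k}=\fW^{*,\rm abc}_{h0}\Lambda^k$ and hence $\star\hf^{\rm abc}_{h0}\Lambda^{n-k}=\hf^{*,\rm abc}_{h0}\Lambda^k$ (likewise with the boundary conditions interchanged). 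The second half is also sound: by Lemma \ref{lem:surjecdde} the local kernels of $\od^k$ on $\mathcal{P}^-_1\Lambda^k(T)$ and of $\odelta_k$ on $\mathcal{P}^{*,-}_1\Lambda^k(T)$ are both $\mathcal{P}_0\Lambda^k(T)$ and the relevant ranges are piecewise constant, so the dual characterization of $\fW_h\Lambda^k$ together with the defining relation of $\fW^{*,\rm abc}_{h0}\Lambda^k$, plus the two inclusions of ranges into kernels (your complex-property argument for the ABC side is the right one), identify both $\hf_h\Lambda^k$ and $\hf^{*,\rm abc}_{h0}\Lambda^k$ with the piecewise constant $k$-forms orthogonal to $\R(\od^{k-1},\fW_h\Lambda^{k-1})$ and to $\R(\odelta_{k+1,h},\fW^{*,\rm abc}_{h0}\Lambda^{k+1})$; in effect you re-derive part of the Hodge decompositions of Lemma \ref{lem:hoddecconst}, which the paper likewise only cites.

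Two caveats keep this from being a fully self-contained proof. First, you stated but did not execute the sign verification; it works out (both terms carry the same parity), but it is the one place where a slip would silently change the transformed relation, so it should be written out. Second, the nontrivial direction of the dual characterizations you invoke---that the jump conditions against $\fW^{*,\rm abc}_{h0}\Lambda^{k+1}$ (resp.\ $\fW^{*,\rm abc}_{h}\Lambda^{k+1}$) force membership in $H\Lambda^k$ (resp.\ $H_0\Lambda^k$)---is itself a result of \cite{Zhang.S2022padao-arxiv} that this paper records without proof, and only ``for example'' in the first case; for the second identity you need its boundary-condition counterpart, which you correctly note but which is nowhere displayed in the text. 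Your proof is therefore correct relative to those stated facts rather than independent of the partially-adjoint framework, which is an acceptable standard here but worth acknowledging explicitly.
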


\begin{lemma}\cite{Zhang.S2022padao-arxiv}\label{lem:hoddecconst}
The Hodge decompositions hold:
\begin{multline*}
\qquad\mathcal{P}_0\Lambda^k(\mathcal{G}_h)=\R(\od^{k-1},\fW_h\Lambda^{k-1})\opp\hf_h\Lambda^k \opp \R(\odelta_{k+1,h},\fW^{*,\rm abc}_{h0}\Lambda^{k+1})
\\
=\R(\od^{k-1},\fW_{h0}\Lambda^{k-1})\opp\hf_{h0}\Lambda^k \opp \R(\odelta_{k+1,h},\fW^{*,\rm abc}_h\Lambda^{k+1}).\qquad
\end{multline*}
\end{lemma}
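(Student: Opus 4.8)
The plan is to read the first displayed identity as an orthogonal direct-sum decomposition and prove it in three short steps — containment of the three summands in $\mathcal{P}_0\Lambda^k(\mathcal{G}_h)$, pairwise $L^2$-orthogonality, and exhaustion — the second identity then following by an entirely parallel argument. First, the containments: on each simplex $T$, the Koszul homotopy relation $\od^{j}\okappa+\okappa\od^{j}$ acting as a nonzero scalar on constant forms shows that $\od^{k-1}$ maps $\mathcal{P}^-_1\Lambda^{k-1}(T)=\mathcal{P}_0\Lambda^{k-1}(T)+\okappa(\mathcal{P}_0\Lambda^k(T))$ into $\mathcal{P}_0\Lambda^k(T)$, so $\R(\od^{k-1},\fW_h\Lambda^{k-1})\subset\mathcal{P}_0\Lambda^k(\mathcal{G}_h)$; dually, by Lemma \ref{lem:surjecdde}, $\odelta_{k+1}$ maps $\mathcal{P}^{*,-}_1\Lambda^{k+1}(T)=\mathcal{P}_0\Lambda^{k+1}(T)+\star\okappa\star(\mathcal{P}_0\Lambda^k(T))$ into $\mathcal{P}_0\Lambda^k(T)$, so $\R(\odelta_{k+1,h},\fW^{*,\rm abc}_{h0}\Lambda^{k+1})\subset\mathcal{P}_0\Lambda^k(\mathcal{G}_h)$; and any element of $\N(\od^k,\fW_h\Lambda^k)$ is forced to be piecewise constant by the same homotopy relation, so $\N(\od^k,\fW_h\Lambda^k)=\mathcal{P}_0\Lambda^k(\mathcal{G}_h)\cap H\Lambda^k$, whence $\hf_h\Lambda^k\subset\mathcal{P}_0\Lambda^k(\mathcal{G}_h)$ and, since $\R(\od^{k-1},\fW_h\Lambda^{k-1})$ is a closed subspace of $\N(\od^k,\fW_h\Lambda^k)$ (Lemma \ref{lem:piwfs}), $\N(\od^k,\fW_h\Lambda^k)=\R(\od^{k-1},\fW_h\Lambda^{k-1})\opp\hf_h\Lambda^k$.

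Next, orthogonality. That $\R(\od^{k-1},\fW_h\Lambda^{k-1})\perp\hf_h\Lambda^k$ is the definition of $\hf_h\Lambda^k$. For $\feta_h\in\fW^{*,\rm abc}_{h0}\Lambda^{k+1}$ and any $\fvartheta_h\in\fW_h\Lambda^k$, the defining relation of the ABC space gives $\langle\odelta_{k+1,h}\feta_h,\fvartheta_h\rangle_{L^2\Lambda^k}=\sum_{T}\langle\feta_h,\od^k\fvartheta_h\rangle_{L^2\Lambda^{k+1}(T)}$; taking $\fvartheta_h\in\N(\od^k,\fW_h\Lambda^k)$ makes the right side vanish, so $\R(\odelta_{k+1,h},\fW^{*,\rm abc}_{h0}\Lambda^{k+1})$ is orthogonal to $\N(\od^k,\fW_h\Lambda^k)=\R(\od^{k-1},\fW_h\Lambda^{k-1})\opp\hf_h\Lambda^k$. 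Thus the three-fold sum is an orthogonal direct sum; call it $W$, so $W\subseteq\mathcal{P}_0\Lambda^k(\mathcal{G}_h)$.

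It remains to see $W=\mathcal{P}_0\Lambda^k(\mathcal{G}_h)$, i.e.\ $W^\perp=\{0\}$ in $\mathcal{P}_0\Lambda^k(\mathcal{G}_h)$. Let $\phi_h\in\mathcal{P}_0\Lambda^k(\mathcal{G}_h)$ be orthogonal to $W$. Since $\mathcal{P}_0\Lambda^k(\mathcal{G}_h)\subset\mathcal{P}^-_1\Lambda^k(\mathcal{G}_h)$ and $\od^k\phi_h=0$ on each cell, orthogonality of $\phi_h$ to $\R(\odelta_{k+1,h},\fW^{*,\rm abc}_{h0}\Lambda^{k+1})$ is exactly the statement that
\[
\sum_{T\in\mathcal{G}_h}\Big(\langle\od^k\phi_h,\feta_h\rangle_{L^2\Lambda^{k+1}(T)}-\langle\phi_h,\odelta_{k+1}\feta_h\rangle_{L^2\Lambda^k(T)}\Big)=0\qquad\text{for all }\feta_h\in\fW^{*,\rm abc}_{h0}\Lambda^{k+1},
\]
which, by the characterization $\fW_h\Lambda^k=\{\fmu_h\in\mathcal{P}^-_1\Lambda^k(\mathcal{G}_h):\sum_{T}(\langle\od^k\fmu_h,\feta_h\rangle-\langle\fmu_h,\odelta_{k+1}\feta_h\rangle)=0\ \forall\,\feta_h\in\fW^{*,\rm abc}_{h0}\Lambda^{k+1}\}$ quoted above, says precisely $\phi_h\in\fW_h\Lambda^k$; with $\od^k\phi_h=0$ this puts $\phi_h\in\N(\od^k,\fW_h\Lambda^k)=\R(\od^{k-1},\fW_h\Lambda^{k-1})\opp\hf_h\Lambda^k\subseteq W$, and $\phi_h\perp W$ then forces $\phi_h=0$.

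The genuine content — and the only step that is not bookkeeping — is the characterization of $\fW_h\Lambda^k$ used in the last step: testing against the ABC space $\fW^{*,\rm abc}_{h0}\Lambda^{k+1}$ must recover exactly the conforming space $\fW_h\Lambda^k$, i.e.\ informally the ABC space must be rich enough to sense every admissible inter-element jump of a broken Whitney form. This is where the theory of partially adjoint operators of \cite{Zhang.S2022padao-arxiv} enters: it identifies the pair built from $(\od^{k-1},\fW_h\Lambda^{k-1})$, $(\odelta_{k+1},\fW^{*,\rm abc}_{h0}\Lambda^{k+1})$ and their broken counterparts $\mathcal{P}^-_1\Lambda^{\bullet}(\mathcal{G}_h)$, $\mathcal{P}^{*,-}_1\Lambda^{\bullet}(\mathcal{G}_h)$ as a base operator pair (Definition \ref{def:basepair}), from whose structure the quoted characterization follows. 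Finally, the second displayed identity is obtained by rerunning the three steps with $\fW_{h0}\Lambda^{k-1}$ and $\fW^{*,\rm abc}_h\Lambda^{k+1}$ in place of $\fW_h\Lambda^{k-1}$ and $\fW^{*,\rm abc}_{h0}\Lambda^{k+1}$, these being matched by the analogous duality characterization of $\fW_{h0}\Lambda^k$.
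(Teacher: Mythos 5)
Your argument is correct, and it is worth noting that the paper itself offers no proof of Lemma \ref{lem:hoddecconst} at all: the statement is simply imported from \cite{Zhang.S2022padao-arxiv}. Your three-step scheme (containment, pairwise orthogonality, exhaustion) works as written: the ranges $\R(\od^{k-1},\fW_h\Lambda^{k-1})$ and $\R(\odelta_{k+1,h},\fW^{*,\rm abc}_{h0}\Lambda^{k+1})$ land in $\mathcal{P}_0\Lambda^k(\mathcal{G}_h)$ by the Koszul formulas, $\N(\od^k,\fW_h\Lambda^k)=\R(\od^{k-1},\fW_h\Lambda^{k-1})\opp\hf_h\Lambda^k$ is the definition of $\hf_h\Lambda^k$, the defining test of the ABC space gives $\R(\odelta_{k+1,h},\fW^{*,\rm abc}_{h0}\Lambda^{k+1})\perp\N(\od^k,\fW_h\Lambda^k)$, and for a piecewise constant $\phi_h$ orthogonal to the whole sum the quoted identity $\fW_h\Lambda^k=\{\fmu_h\in\mathcal{P}^-_1\Lambda^k(\mathcal{G}_h):\sum_T\langle\od^k\fmu_h,\feta_h\rangle-\langle\fmu_h,\odelta_{k+1}\feta_h\rangle=0,\ \forall\feta_h\in\fW^{*,\rm abc}_{h0}\Lambda^{k+1}\}$ forces $\phi_h\in\N(\od^k,\fW_h\Lambda^k)$ and hence $\phi_h=0$. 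What this buys, compared with the paper's bare citation, is an explicit reduction of the Hodge decomposition to a single nontrivial ingredient; but be clear about what you have and have not proved: that duality characterization of $\fW_h\Lambda^k$ is itself stated in the paper without proof (``for example'') and comes from the same reference, and for the second identity you need its analogue for $\fW_{h0}\Lambda^k$ tested against $\fW^{*,\rm abc}_h\Lambda^{k+1}$, which the paper never states explicitly --- so that step is an appeal to the partially-adjoint framework of \cite{Zhang.S2022padao-arxiv} rather than to anything available in the present text. In other words, you have not produced an independent proof of the lemma but a correct and transparent derivation of it from the (unproved) mutual-duality characterizations of the conforming and ABC Whitney spaces, which is exactly where the cited theory concentrates the difficulty; flagging that dependence, as you did, is the right thing to do.
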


\section{Finite element space and Poincar\'e inequality}
\label{sec:fes}

\subsection{A pair of adjoint operators associated with Hodge Laplacian}

Denote 
$$\oT:L^2\Lambda^k\to L^2\Lambda^{k+1}\times L^2\Lambda^{k-1},\ \ \mbox{by}\ \ \oT\fmu:=(\od^k\fmu,\odelta_k\fmu),
$$ 
and 
$$
\aoT:L^2\Lambda^{k+1}\times L^2\Lambda^{k-1}\to L^2\Lambda^k,\ \ \mbox{by}\ \ \aoT(\feta,\ftau):=\odelta_{k+1}\feta+\od^{k-1}\ftau.
$$ 
Then formally, the Hodge Laplacian $\odelta_{k+1}\od^k+\od^{k-1}\odelta_k=\aoT\circ\oT$. 

\begin{lemma}
The operator $(\oT,H\Lambda^k\cap H^*_0\Lambda^k)$ is closed, and its range $\R(\oT,H\Lambda^k\cap H^*_0\Lambda^k)$ is closed.
\end{lemma}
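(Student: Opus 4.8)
The plan is to prove the two assertions in turn. For closedness of $\oT$: since $(\od^k,H\Lambda^k)$ is a closed operator and $(\odelta_k,H^*_0\Lambda^k)$ is a closed operator (the latter because $H^*_0\Lambda^k$ is the closure of $\mathcal{C}_0^\infty\Lambda^k$ inside the complete space $H^*\Lambda^k$), whenever $\fmu_j\in H\Lambda^k\cap H^*_0\Lambda^k$, $\fmu_j\to\fmu$ in $L^2\Lambda^k$ and $\oT\fmu_j=(\od^k\fmu_j,\odelta_k\fmu_j)\to(\feta,\ftau)$, one gets $\fmu\in H\Lambda^k$ with $\od^k\fmu=\feta$ and $\fmu\in H^*_0\Lambda^k$ with $\odelta_k\fmu=\ftau$, so $\fmu\in H\Lambda^k\cap H^*_0\Lambda^k$ and $\oT\fmu=(\feta,\ftau)$; equivalently, the $\oT$-graph norm on $H\Lambda^k\cap H^*_0\Lambda^k$ is the complete norm of the intersection of the two graphs.

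For the range: recall that a closed operator has closed range precisely when its index of closed range is finite, so it suffices to bound $\icr(\oT,H\Lambda^k\cap H^*_0\Lambda^k)$, i.e.\ to establish a uniform Poincar\'e inequality on $(H\Lambda^k\cap H^*_0\Lambda^k)^{\boldsymbol\lrcorner}$. The key (classical) input I would invoke is the $L^2$ Hodge theory of the de Rham complex on a bounded Lipschitz domain (see \cite{Arnold.D;Falk.R;Winther.R2010bams,Arnold.D2018feec} and the references therein, the continuous counterparts of the discrete facts recalled in Section \ref{sec:pre}): the ranges $\R(\od^{k-1},H\Lambda^{k-1})$ and $\R(\odelta_{k+1},H^*_0\Lambda^{k+1})$ are closed, $\hf\Lambda^k$ is finite-dimensional, the $L^2$-orthogonal Hodge decomposition $L^2\Lambda^k=\R(\od^{k-1},H\Lambda^{k-1})\opp\hf\Lambda^k\opp\R(\odelta_{k+1},H^*_0\Lambda^{k+1})$ holds, and there is $C$ with $\|\fmu\|_{L^2\Lambda^k}\leqslant C\|\od^k\fmu\|_{L^2\Lambda^{k+1}}$ for $\fmu\in H\Lambda^k$ orthogonal to $\N(\od^k,H\Lambda^k)$ and $\|\fmu\|_{L^2\Lambda^k}\leqslant C\|\odelta_k\fmu\|_{L^2\Lambda^{k-1}}$ for $\fmu\in H^*_0\Lambda^k$ orthogonal to $\N(\odelta_k,H^*_0\Lambda^k)$. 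Using the identity $\R(\od^{k-1},H\Lambda^{k-1})^\perp=\N((\od^{k-1})^*)=\N(\odelta_k,H^*_0\Lambda^k)$ (the maximal $\od^{k-1}$ and the minimal $\odelta_k$ being adjoint, which needs no closedness), the definition of $\hf\Lambda^k$ gives $\hf\Lambda^k=\N(\od^k,H\Lambda^k)\cap\N(\odelta_k,H^*_0\Lambda^k)=\N(\oT,H\Lambda^k\cap H^*_0\Lambda^k)$, which is therefore finite-dimensional and closed in $L^2\Lambda^k$; likewise $\N(\od^k,H\Lambda^k)=\R(\od^{k-1},H\Lambda^{k-1})\opp\hf\Lambda^k$ and $\N(\odelta_k,H^*_0\Lambda^k)=\R(\odelta_{k+1},H^*_0\Lambda^{k+1})\opp\hf\Lambda^k$.

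Finally, for $\fmu\in(H\Lambda^k\cap H^*_0\Lambda^k)^{\boldsymbol\lrcorner}$ we have $\fmu\perp\hf\Lambda^k$, so its Hodge decomposition is $\fmu=\fmu_{\od}+\fmu_{\odelta}$ with $\fmu_{\od}\in\R(\od^{k-1},H\Lambda^{k-1})$ and $\fmu_{\odelta}\in\R(\odelta_{k+1},H^*_0\Lambda^{k+1})$. Being a $\od$-image, $\fmu_{\od}$ lies in $H\Lambda^k$ with $\od^k\fmu_{\od}=0$; hence $\fmu_{\odelta}=\fmu-\fmu_{\od}\in H\Lambda^k$ with $\od^k\fmu_{\odelta}=\od^k\fmu$. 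Being a codifferential image of an $H^*_0$-form, $\fmu_{\odelta}$ lies in $H^*_0\Lambda^k$ with $\odelta_k\fmu_{\odelta}=0$; hence $\fmu_{\od}=\fmu-\fmu_{\odelta}\in H^*_0\Lambda^k$ with $\odelta_k\fmu_{\od}=\odelta_k\fmu$. Since moreover $\fmu_{\od}\perp\R(\odelta_{k+1},H^*_0\Lambda^{k+1})\opp\hf\Lambda^k=\N(\odelta_k,H^*_0\Lambda^k)$ and $\fmu_{\odelta}\perp\R(\od^{k-1},H\Lambda^{k-1})\opp\hf\Lambda^k=\N(\od^k,H\Lambda^k)$, the two componentwise Poincar\'e inequalities yield
\[
\|\fmu\|_{L^2\Lambda^k}\leqslant\|\fmu_{\od}\|_{L^2\Lambda^k}+\|\fmu_{\odelta}\|_{L^2\Lambda^k}\leqslant C\big(\|\odelta_k\fmu\|_{L^2\Lambda^{k-1}}+\|\od^k\fmu\|_{L^2\Lambda^{k+1}}\big)\leqslant\sqrt2\,C\,\|\oT\fmu\|,
\]
so $\icr(\oT,H\Lambda^k\cap H^*_0\Lambda^k)\leqslant\sqrt2\,C<\infty$ and $\R(\oT,H\Lambda^k\cap H^*_0\Lambda^k)$ is closed (concretely: a convergent sequence $\oT\fmu_j$ has the same limit as $\oT(\fmu_j-\mathbf{P}_{\hf}\fmu_j)$, and the estimate makes $\fmu_j-\mathbf{P}_{\hf}\fmu_j\in(H\Lambda^k\cap H^*_0\Lambda^k)^{\boldsymbol\lrcorner}$ Cauchy in $L^2\Lambda^k$, whence closedness of $\oT$ identifies the limit in the range). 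The only non-elementary ingredient is the Lipschitz-domain $L^2$ Hodge theory --- equivalently, the compact embedding $H\Lambda^k\cap H^*_0\Lambda^k\hookrightarrow L^2\Lambda^k$ of Weck--Picard type, from which the Poincar\'e estimate could alternatively be obtained by a compactness contradiction --- while the only mildly delicate bookkeeping is that the Hodge components $\fmu_{\od},\fmu_{\odelta}$ of an element of $H\Lambda^k\cap H^*_0\Lambda^k$ inherit the boundary condition built into $H^*_0$; both are classical and cited rather than reproved.
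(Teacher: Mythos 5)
Your proposal is correct. The first half (closedness of the operator) is exactly what the paper means by ``can be proved by definition'': $(\od^k,H\Lambda^k)$ is closed as the maximal operator and $(\odelta_k,H^*_0\Lambda^k)$ is closed by construction as a graph-norm closure, so the intersection carries a complete graph norm for $\oT$. For the closedness of the range, however, you take a genuinely different (though closely related) route. The paper splits the \emph{domain}: it decomposes $H\Lambda^k$ and $H^*_0\Lambda^k$ via the Helmholtz/Hodge decomposition, observes that $(\N(\od^k,H\Lambda^k))^\perp$ lies in $\N(\odelta_k,H^*_0\Lambda^k)$ and vice versa, and thereby identifies $\R(\oT,H\Lambda^k\cap H^*_0\Lambda^k)=\R(\od^k,H\Lambda^k)\times\R(\odelta_k,H^*_0\Lambda^k)$, a product of two classically closed subspaces (it also reads off $\N(\oT,\cdot)=\hf\Lambda^k$ along the way). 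You instead split the \emph{element}: for $\fmu$ in the domain orthogonal to $\hf\Lambda^k$ you use the $L^2$ Hodge decomposition $\fmu=\fmu_{\od}+\fmu_{\odelta}$, check that each component inherits membership in both $H\Lambda^k$ and $H^*_0\Lambda^k$ (the complex property of the maximal $\od$ and minimal $\odelta$, which you correctly flag as the only delicate bookkeeping), and apply the two componentwise Poincar\'e inequalities to bound $\icr(\oT,H\Lambda^k\cap H^*_0\Lambda^k)$, from which closed range follows via the already-proved closedness of $\oT$. Both arguments rest on the same classical Lipschitz-domain $L^2$ Hodge theory (closed ranges of $\od$ and of the minimal $\odelta$, finite-dimensional $\hf\Lambda^k$, componentwise Poincar\'e inequalities), and you also recover the kernel identification $\N(\oT,\cdot)=\hf\Lambda^k$. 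What your route buys is the quantitative continuous Poincar\'e inequality for $\oT$ (the continuous counterpart of Lemma \ref{lem:pifes}) as a by-product; what the paper's route buys is the explicit identification of the range as a product, which is structurally parallel to the discrete development that follows. Either proof is acceptable for the lemma as stated.
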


\begin{proof}
We begin with the structure of $H\Lambda^k\cap H\Lambda^*_0\Lambda^k$. Firstly, decompose 
$$
H\Lambda^k=\N(\od^k,H\Lambda^k)\opp (\N(\od^k,H\Lambda^k))^\perp
=\R(\od^{k-1},H\Lambda^{k-1})\opp\hf\Lambda^k\opp (\N(\od^k,H\Lambda^k))^\perp.
$$
By the Helmholtz decomposition, $(\N(\od^k,H\Lambda^k))^\perp\subset H\Lambda^k\cap \N(\odelta_k,H^*_0\Lambda^k)$. Namely, 
$$
\R(\od^k,(\N(\od^k,H\Lambda^k))^\perp)=\R(\od^k,H\Lambda^k),\ \ \mbox{and}\ \R(\odelta_k,(\N(\od^k,H\Lambda^k))^\perp)=\{0\}.
$$
Similarly, decompose 
$H^*_0\Lambda^k=\N(\odelta_k,H^*_0\Lambda^k)\opp (\N(\odelta_k,H^*_0\Lambda^k))^\perp$, and 
$$
\R(\od^k,(\N(\odelta_k,H^*_0\Lambda^k))^\perp)=\{0\},\ \ \mbox{and}\ \R(\odelta_k,(\N(\odelta_k,H^*_0\Lambda^k))^\perp)=\R(\odelta_k,H^*_0\Lambda^k).
$$
It then follows that 
$$
\R(\oT,H\Lambda^k\cap H^*_0\Lambda^k)=\R(\od^k,H\Lambda^k)\times\R(\odelta_k,H^*_0\Lambda^k).
$$
Simultaneously, 
\begin{multline*}
\N(\oT,H\Lambda^k\cap H^*_0\Lambda^k)=\N(\od^k,H\Lambda^k)\cap \N(\odelta_k,H^*_0\Lambda^k)
\\
=[\R(\od^{k-1},H\Lambda^{k-1})\opp \hf\Lambda^k ]\cap [\R(\odelta_{k+1},H^*_0\Lambda^{k+1})\opp\hf^*_0\Lambda^k]=\hf\Lambda^k.
\end{multline*}
The closeness of $(\oT,H\Lambda^k\cap H^*_0\Lambda^k)$ can be proved by definition. The proof is completed. 
\end{proof}

\begin{lemma}
The adjoint operator of $(\oT,H\Lambda^k\cap H^*_0\Lambda^k)$ is $(\aoT,H^*_0\Lambda^{k+1}\times H\Lambda^{k-1})$.
\end{lemma}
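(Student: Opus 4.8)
The plan is to show that the two operators $(\oT, H\Lambda^k\cap H^*_0\Lambda^k)$ and $(\aoT, H^*_0\Lambda^{k+1}\times H\Lambda^{k-1})$ satisfy the defining relation of adjoints: for all $\fmu$ in the domain of $\oT$ and all $(\feta,\ftau)$ in the domain of $\aoT$,
$$
\langle \oT\fmu,(\feta,\ftau)\rangle_{L^2\Lambda^{k+1}\times L^2\Lambda^{k-1}} = \langle\fmu,\aoT(\feta,\ftau)\rangle_{L^2\Lambda^k},
$$
and moreover that $H^*_0\Lambda^{k+1}\times H\Lambda^{k-1}$ is exactly the \emph{maximal} domain for which this holds. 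Unwinding the definitions of $\oT$ and $\aoT$, the identity to be verified reads
$$
\langle\od^k\fmu,\feta\rangle_{L^2\Lambda^{k+1}} + \langle\odelta_k\fmu,\ftau\rangle_{L^2\Lambda^{k-1}}
= \langle\fmu,\odelta_{k+1}\feta\rangle_{L^2\Lambda^k} + \langle\fmu,\od^{k-1}\ftau\rangle_{L^2\Lambda^k}.
$$
This splits into two separate integration-by-parts statements: $\langle\od^k\fmu,\feta\rangle=\langle\fmu,\odelta_{k+1}\feta\rangle$, and $\langle\odelta_k\fmu,\ftau\rangle=\langle\fmu,\od^{k-1}\ftau\rangle$.

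First I would establish the inclusion $(\aoT, H^*_0\Lambda^{k+1}\times H\Lambda^{k-1})\subset (\oT, H\Lambda^k\cap H^*_0\Lambda^k)^*$. For the first integration-by-parts relation: since $\fmu\in H\Lambda^k$ has no boundary constraint but $\feta\in H^*_0\Lambda^{k+1}$ vanishes at the boundary in the appropriate trace sense, the boundary term in Green's formula for the pair $(\od^k,\odelta_{k+1})$ drops, giving $\langle\od^k\fmu,\feta\rangle=\langle\fmu,\odelta_{k+1}\feta\rangle$; this is the standard duality $(\od^k,H\Lambda^k)^*\supset(\odelta_{k+1},H^*_0\Lambda^{k+1})$. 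Symmetrically, for the second relation, $\fmu\in H^*_0\Lambda^k$ vanishes at the boundary while $\ftau\in H\Lambda^{k-1}$ is unconstrained, so the boundary term in Green's formula for $(\odelta_k,\od^{k-1})$ drops, giving $\langle\odelta_k\fmu,\ftau\rangle=\langle\fmu,\od^{k-1}\ftau\rangle$; this is the duality $(\odelta_k,H^*_0\Lambda^k)^*\supset(\od^{k-1},H\Lambda^{k-1})$, obtainable from the previous one via the Hodge star. Combining, the product operator $\aoT$ on $H^*_0\Lambda^{k+1}\times H\Lambda^{k-1}$ is contained in $\oT^*$.

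For the reverse inclusion $\oT^*\subset (\aoT, H^*_0\Lambda^{k+1}\times H\Lambda^{k-1})$, suppose $(\feta,\ftau)\in L^2\Lambda^{k+1}\times L^2\Lambda^{k-1}$ lies in the domain of $\oT^*$, i.e.\ there is $\fzeta\in L^2\Lambda^k$ with $\langle\od^k\fmu,\feta\rangle+\langle\odelta_k\fmu,\ftau\rangle=\langle\fmu,\fzeta\rangle$ for all $\fmu\in H\Lambda^k\cap H^*_0\Lambda^k$. The idea is to test against suitable subclasses of $\fmu$ to decouple the two terms. Testing first with $\fmu\in \mathcal{C}_0^\infty\Lambda^k$ (which lies in the intersection and is dense enough to probe interior regularity) shows, in the distributional sense, that $\odelta_{k+1}\feta+\od^{k-1}\ftau=\fzeta\in L^2\Lambda^k$. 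To separate $\odelta_{k+1}\feta$ from $\od^{k-1}\ftau$ and to identify the boundary conditions, I would test against the two families $\fmu\in(\N(\od^k,H\Lambda^k))^\perp\subset H\Lambda^k\cap\N(\odelta_k,H^*_0\Lambda^k)$ and $\fmu\in(\N(\odelta_k,H^*_0\Lambda^k))^\perp$ identified in the preceding lemma: on the first family $\odelta_k\fmu=0$ and $\od^k\fmu$ ranges over all of $\R(\od^k,H\Lambda^k)$, forcing $\feta\in H^*\Lambda^{k+1}$ with the homogeneous trace condition characterizing $H^*_0\Lambda^{k+1}$; symmetrically the second family forces $\ftau\in H\Lambda^{k-1}$.

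The main obstacle is the careful bookkeeping of the boundary conditions: one must argue that the absence of a boundary constraint on $\fmu$ in the $H\Lambda^k$-direction forces the \emph{full} homogeneous boundary condition on $\feta$ (i.e.\ membership in $H^*_0\Lambda^{k+1}$, not merely $H^*\Lambda^{k+1}$), and dually for $\ftau$. This is where the Lipschitz-domain trace theory and the density of smooth forms enter, and where I would be most careful; the interior (distributional) part is routine. Once both inclusions are established, the equality of domains follows, completing the proof.
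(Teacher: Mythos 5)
Your strategy is sound, but it is organized differently from the paper's proof, so it is worth comparing the two. The paper never argues element-by-element on the domain of $\oT^*$: it computes the kernel and range of the candidate $(\aoT,H^*_0\Lambda^{k+1}\times H\Lambda^{k-1})$ and, using the Hodge decompositions together with the identities $\R(\oT,H\Lambda^k\cap H^*_0\Lambda^k)=\R(\od^k,H\Lambda^k)\times\R(\odelta_k,H^*_0\Lambda^k)$ and $\N(\oT,H\Lambda^k\cap H^*_0\Lambda^k)=\hf\Lambda^k$ from the preceding lemma, observes that $L^2\Lambda^k=\N(\oT,\cdot)\opp\R(\aoT,\cdot)$ and $L^2\Lambda^{k+1}\times L^2\Lambda^{k-1}=\R(\oT,\cdot)\opp\N(\aoT,\cdot)$; hence the candidate has the same kernel and range as the true adjoint, and this combined with the Green identity (your forward inclusion, which is identical in both arguments) yields equality of the operators. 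You instead take an arbitrary element of the domain of $\oT^*$ and show directly that it lies in the candidate domain, decoupling the two components by testing with the families $(\N(\od^k,H\Lambda^k))^\perp$ and $(\N(\odelta_k,H^*_0\Lambda^k))^\perp$. That uses the same structural facts from the preceding lemma, just deployed pointwise rather than through a kernel/range comparison; both routes are legitimate.

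The one place your sketch stops short is exactly the step you flag, and you do not need Lipschitz trace theory to close it. From $\langle\od^k\fmu,\feta\rangle_{L^2\Lambda^{k+1}}=\langle\fmu,\fzeta\rangle_{L^2\Lambda^k}$ for all $\fmu\in(\N(\od^k,H\Lambda^k))^\perp\cap H\Lambda^k$, pass to an arbitrary $\fmu\in H\Lambda^k$ by subtracting its $L^2$-orthogonal projection onto the (closed) kernel $\N(\od^k,H\Lambda^k)$, which changes neither $\od^k\fmu$ nor membership in $H\Lambda^k$: with $\mathbf{P}$ the projection onto $(\N(\od^k,H\Lambda^k))^\perp$, this gives $\langle\od^k\fmu,\feta\rangle_{L^2\Lambda^{k+1}}=\langle\fmu,\mathbf{P}\fzeta\rangle_{L^2\Lambda^k}$ for every $\fmu\in H\Lambda^k$, which is precisely the statement that $\feta$ belongs to the domain of the adjoint of $(\od^k,H\Lambda^k)$, i.e.\ $\feta\in H^*_0\Lambda^{k+1}$ with $\odelta_{k+1}\feta=\mathbf{P}\fzeta$ --- the same mutual adjointness of $(\od^k,H\Lambda^k)$ and $(\odelta_{k+1},H^*_0\Lambda^{k+1})$ you already invoked for the forward inclusion. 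The mirror argument gives $\ftau\in H\Lambda^{k-1}$, and testing with $\fmu\in\hf\Lambda^k$ shows $\fzeta\perp\hf\Lambda^k$, so the two projected pieces recombine to $\fzeta$ and $\aoT(\feta,\ftau)=\fzeta$. With that paragraph supplied your proof is complete; the preliminary test against $\mathcal{C}_0^\infty\Lambda^k$, which only controls the sum $\odelta_{k+1}\feta+\od^{k-1}\ftau$ distributionally, is then not needed.
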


\begin{proof}
Firstly, 
$$
\R(\aoT, H^*_0\Lambda^{k+1}\times H\Lambda^{k-1})=\R(\odelta_{k+1},H^*_0\Lambda^{k+1})\opp \R(\od^{k-1},H\Lambda^{k-1}),
$$
and 
$$
\N(\aoT, H^*_0\Lambda^{k+1}\times H\Lambda^{k-1})=\N(\odelta_{k+1},H^*_0\Lambda^{k+1})\times\N(\od^{k-1},H\Lambda^{k-1}).
$$
Namely, 
$$
L^2\Lambda^k=\N(\oT,H\Lambda^k\cap H^*_0\Lambda^k)\opp \R(\aoT, H^*_0\Lambda^{k+1}\times H\Lambda^{k-1})
$$
and 
$$
L^2\Lambda^{k+1}\times L^2\Lambda^{k-1}=\R(\oT,H\Lambda^k\cap H^*_0\Lambda^k)\opp \N(\aoT, H^*_0\Lambda^{k+1}\times H\Lambda^{k-1}). 
$$
Therefore, $(\aoT,H^*_0\Lambda^{k+1}\times H\Lambda^{k-1})$ has the same range and kernel spaces as the adjoint operator of $(\oT,H\Lambda^k\cap H^*_0\Lambda^k)$. Further, it is easy to verify that, for any $\fmu\in H\Lambda^k\cap H^*_0\Lambda^k$, $\feta\in H^*_0\Lambda^{k+1}$, and $\ftau\in H\Lambda^{k-1}$,
\begin{equation}
\langle\od^k\fmu,\feta\rangle_{L^2\Lambda^{k+1}}+\langle\odelta_k\fmu,\ftau\rangle_{L^2\Lambda^{k-1}}-\langle\fmu,(\odelta_{k+1}\feta+\od^{k-1}\ftau)\rangle_{L^2\Lambda^k}=0.
\end{equation}
Hence $(\aoT,H^*_0\Lambda^{k+1}\times H\Lambda^{k-1})$ is the adjoint operator of $(\oT,H\Lambda^k\cap H^*_0\Lambda^k)$. 
\end{proof}

\subsection{Base operator pair for discretization}

We use the following notation: (``$\cdot^{\rm m}$" for minimal)
\begin{itemize}
\item $\pddemL^k(T):=\mathcal{P}_0\Lambda^k(T)+\okappa_T(\mathcal{P}_0\Lambda^{k+1}(T))+\star\okappa_T\star(\mathcal{P}_0\Lambda^{k-1}(T))$;
\item $\pddempdL^k(T):=\pddemL^k(T)+\mathcal{H}^2_{\od}(T)$; \  $\pddempdeL^k(T):=\pddemL^k(T)+\mathcal{H}^2_{\odelta}(T)$;
\item $\mathbf{P}_{\odelta\times\od}^k(T):=\mathcal{P}^{*,-}_1\Lambda^{k+1}(T)\times \mathcal{P}^-_1\Lambda^{k-1}(T).$
\end{itemize}

Direct calculation leads to the lemma below. 
\begin{lemma}\label{lem:cell1-1}
Given $\fmu\in \pddempdeL^k(T)$, $\fmu=0$ if and only if, for any $(\feta,\ftau)\in \mathbf{P}_{\odelta\times\od}^k(T)$, 
$$
\langle\od^k\fomega,\feta\rangle_{L^2\Lambda^{k+1}(T)}+\langle\odelta_k\fomega,\ftau\rangle_{L^2\Lambda^{k-1}(T)}-\langle\fomega,(\odelta_{k+1}\feta+\od^{k-1}\ftau)\rangle_{L^2\Lambda^k(T)}=0.
$$ 
\end{lemma}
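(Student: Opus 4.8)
The forward implication is immediate, so the content is the converse: if the displayed identity holds for every $(\feta,\ftau)\in\mathbf{P}_{\odelta\times\od}^k(T)$, then $\fmu=0$. The plan is a ``peeling'' argument along the decomposition
\[
\pddempdeL^k(T)=\mathcal{P}_0\Lambda^k(T)\oplus\okappa_T(\mathcal{P}_0\Lambda^{k+1}(T))\oplus\star\okappa_T\star(\mathcal{P}_0\Lambda^{k-1}(T))\oplus\mathcal{H}^2_{\odelta}\Lambda^k(T),
\]
which is direct by polynomial degree, the last summand having a nonzero homogeneous quadratic leading part and, by the choice of the constants $c^{\ixalpha_j}$, zero mean over $T$. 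Accordingly I write $\fmu=\fmu_0+\okappa_T\fvarsigma+\star\okappa_T\star\fvarphi+\fzeta$ with $\fmu_0\in\mathcal{P}_0\Lambda^k(T)$, $\fvarsigma\in\mathcal{P}_0\Lambda^{k+1}(T)$, $\fvarphi\in\mathcal{P}_0\Lambda^{k-1}(T)$ and $\fzeta\in\mathcal{H}^2_{\odelta}\Lambda^k(T)$.

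The first step is to record how $\od^k$ and $\odelta_k$ act on the four summands. A short computation (using that a wedge, or interior product, with a repeated index vanishes) shows that $\od^k$ annihilates $\mathcal{P}_0\Lambda^k(T)+\star\okappa_T\star(\mathcal{P}_0\Lambda^{k-1}(T))$ and that $\odelta_k$ annihilates $\mathcal{P}_0\Lambda^k(T)+\okappa_T(\mathcal{P}_0\Lambda^{k+1}(T))$; together with $\od^k\fzeta=0$, $\odelta_k\fzeta\in\okappa_T(\mathcal{P}_0\Lambda^k(T))$ (the identities recalled above Lemma \ref{lem:surjecdde}), and the fact that $\od^k$ acts on $\okappa_T(\mathcal{P}_0\Lambda^{k+1}(T))$ as multiplication by $k+1$, this yields
\[
\od^k\fmu=(k+1)\fvarsigma,\qquad \odelta_k\fmu=\odelta_k(\star\okappa_T\star\fvarphi)+\odelta_k\fzeta,
\]
where $\odelta_k(\star\okappa_T\star\fvarphi)\in\mathcal{P}_0\Lambda^{k-1}(T)$ and $\odelta_k\fzeta\in\okappa_T(\mathcal{P}_0\Lambda^k(T))$ has zero mean over $T$.

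Now I remove the summands in turn by specializing $(\feta,\ftau)$. Taking $\feta\in\mathcal{P}_0\Lambda^{k+1}(T)$, $\ftau=0$ kills $\odelta_{k+1}\feta$ and leaves $\langle(k+1)\fvarsigma,\feta\rangle=0$, so $\fvarsigma=0$. Taking $\feta=0$, $\ftau\in\mathcal{P}_0\Lambda^{k-1}(T)$ kills $\od^{k-1}\ftau$, and since $\odelta_k\fzeta$ has zero mean this forces the constant form $\odelta_k(\star\okappa_T\star\fvarphi)$ to vanish; the bijectivity in Lemma \ref{lem:surjecdde} then gives $\fvarphi=0$. Next, taking $\feta\in\star\okappa\star(\mathcal{P}_0\Lambda^k(T))$, $\ftau=0$: now $\fmu=\fmu_0+\fzeta$ with $\od^k\fmu=0$, while $\odelta_{k+1}\feta$ ranges over all of $\mathcal{P}_0\Lambda^k(T)$ (Lemma \ref{lem:surjecdde} with $k+1$ in place of $k$, which remains valid for $\star\okappa\star$ since $\okappa$ and $\okappa_T$ differ only by a $\mathcal{P}_0$-valued operator, which $\odelta_{k+1}$ kills), so $\langle\fmu_0+\fzeta,\fg\rangle=0$ for every constant $k$-form $\fg$; as $\mathbf{P}^k_{0,T}\fzeta=0$, this gives $\fmu_0=0$.

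It remains to show $\fzeta=0$, which I expect to be the crux. At this point $\fmu=\fzeta$ with $\od^k\fmu=0$; moreover $\odelta_{k+1}\feta+\od^{k-1}\ftau$ is always a constant $k$-form while $\fzeta$ has zero mean, so the hypothesis collapses to $\langle\odelta_k\fzeta,\ftau\rangle=0$ for all $\ftau\in\mathcal{P}^-_1\Lambda^{k-1}(T)$. The decisive observation — and the reason $\mathcal{H}^2_{\odelta}$ is the right enrichment — is that $\odelta_k\fzeta\in\okappa_T(\mathcal{P}_0\Lambda^k(T))\subset\mathcal{P}_0\Lambda^{k-1}(T)+\okappa(\mathcal{P}_0\Lambda^k(T))=\mathcal{P}^-_1\Lambda^{k-1}(T)$ lies inside the test space itself (the tilde in $\okappa_T$ contributing only a constant form); choosing $\ftau=\odelta_k\fzeta$ then gives $\|\odelta_k\fzeta\|_{L^2\Lambda^{k-1}(T)}^2=0$, hence $\odelta_k\fzeta=0$, hence $\fzeta=0$ by the injectivity of $\odelta_k$ on $\mathcal{H}^2_{\odelta}\Lambda^k(T)$ (Lemma \ref{lem:surjecdde}). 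Therefore $\fmu=0$. As a consistency check, one has $\dim\pddempdeL^k(T)=2\binom{n}{k}+\binom{n}{k+1}+\binom{n}{k-1}=\binom{n+2}{k+1}=\dim\mathbf{P}_{\odelta\times\od}^k(T)$, so the bilinear form in the lemma is in fact a perfect pairing, and the secondary subtlety above — disentangling $\fmu_0$ from $\fzeta$, both of which are $\od^k$-closed — is exactly what the identity $\mathbf{P}^k_{0,T}\fzeta=0$ resolves.
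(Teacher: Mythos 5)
Your proof is correct and is precisely the kind of argument the paper leaves implicit (its ``proof'' is only the phrase ``Direct calculation leads to the lemma below''): the sequential elimination of the $\mathcal{P}_0\Lambda^k$, $\okappa_T(\mathcal{P}_0\Lambda^{k+1})$, $\star\okappa_T\star(\mathcal{P}_0\Lambda^{k-1})$ and $\mathcal{H}^2_{\odelta}\Lambda^k(T)$ components uses only the mapping identities recalled before Lemma \ref{lem:surjecdde} and the zero-mean normalizations, and every test pair you specialize to does lie in $\mathbf{P}_{\odelta\times\od}^k(T)$ — in particular $\odelta_k\fzeta\in\okappa_T(\mathcal{P}_0\Lambda^k(T))\subset\mathcal{P}^-_1\Lambda^{k-1}(T)$, which is the decisive choice. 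The closing dimension count $2\binom{n}{k}+\binom{n}{k+1}+\binom{n}{k-1}$ on both sides is also correct, so your remark that the bilinear form is a nondegenerate pairing is consistent with how the space is used later in the partial-adjointness argument.
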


\begin{lemma}\label{lem:isocellT}
\begin{enumerate}
\item $\N(\oT,\pddempdeL^k(T))=\R(\aoT,\mathbf{P}_{\odelta\times\od}^k(T))$. 
\item There exists a constant $C_{k,n}>0$, depending on the regularity of $T$, such that 
\begin{multline}\label{eq:baseT}
\inf_{(\feta,\ftau)\in\R(\oT,\pddempdeL^k(T))}\sup_{(\fzeta,\fsigma)\in \N(\aoT,\mathbf{P}_{\odelta\times\od}^k(T))}\frac{\langle\fzeta,\feta\rangle_{L^2\Lambda^{k+1}(T)}+\langle\fsigma,\ftau\rangle_{L^2\Lambda^{k-1}(T)}}{(\|\fzeta\|_{L^2\Lambda^{k+1}(T)}+\|\fsigma\|_{L^2\Lambda^{k-1}(T)})(\|\feta\|_{L^2\Lambda^{k+1}(T)}+\|\ftau\|_{L^2\Lambda^{k-1}(T)})}
\\
=\inf_{(\fzeta,\fsigma)\in \N(\aoT,\mathbf{P}_{\odelta\times\od}^k(T))}\sup_{(\feta,\ftau)\in\R(\oT,\pddempdeL^k(T))}\frac{\langle\fzeta,\feta\rangle_{L^2\Lambda^{k+1}(T)}+\langle\fsigma,\ftau\rangle_{L^2\Lambda^{k-1}(T)}}{(\|\fzeta\|_{L^2\Lambda^{k+1}(T)}+\|\fsigma\|_{L^2\Lambda^{k-1}(T)})(\|\feta\|_{L^2\Lambda^{k+1}(T)}+\|\ftau\|_{L^2\Lambda^{k-1}(T)})}\geqslant C_{k,n}.
\end{multline}
\end{enumerate}
\end{lemma}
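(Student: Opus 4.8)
The plan is to verify the two claims by reducing them, via the Hodge-type splittings of the local spaces $\pddempdeL^k(T)$ and $\mathbf{P}_{\odelta\times\od}^k(T)$, to the bijectivity and Poincaré-type estimates already recorded in Lemmas \ref{lem:surjecdde}, \ref{lem:hfreq} and \ref{lem:hfonT}. First I would decompose the domain space as a (generally non-orthogonal but direct) sum
$\pddempdeL^k(T)=\mathcal{P}_0\Lambda^k(T)\oplus\okappa_T(\mathcal{P}_0\Lambda^{k+1}(T))\oplus\star\okappa_T\star(\mathcal{P}_0\Lambda^{k-1}(T))\oplus\mathcal{H}^2_{\odelta}(T)$, and the target space as $\mathbf{P}_{\odelta\times\od}^k(T)=\big(\mathcal{P}_0\Lambda^{k+1}(T)\oplus\star\okappa\star(\mathcal{P}_0\Lambda^k(T))\big)\times\big(\mathcal{P}_0\Lambda^{k-1}(T)\oplus\okappa(\mathcal{P}_0\Lambda^k(T))\big)$. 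Under $\oT\fmu=(\od^k\fmu,\odelta_k\fmu)$, Lemma \ref{lem:surjecdde} tells us that $\od^k$ is bijective from $\okappa_T(\mathcal{P}_0\Lambda^{k+1}(T))$ onto $\mathcal{P}_0\Lambda^{k+1}(T)$ and $\odelta_k$ is bijective from $\star\okappa_T\star(\mathcal{P}_0\Lambda^{k-1}(T))$ onto $\mathcal{P}_0\Lambda^{k-1}(T)$, while the constant forms $\mathcal{P}_0\Lambda^k(T)$ and the ``harmonic bubbles'' $\mathcal{H}^2_{\odelta}(T)$ are in $\N(\od^k,\cdot)$; moreover $\odelta_k$ maps $\mathcal{H}^2_\odelta(T)$ bijectively onto $\okappa_T(\mathcal{P}_0\Lambda^k(T))$. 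Tracking these pieces shows precisely which summands of $\mathbf{P}_{\odelta\times\od}^k(T)$ lie in $\R(\oT,\pddempdeL^k(T))$ and which lie in its complement — this is the bookkeeping core of the argument.

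For part (1), $\fmu\in\N(\oT,\pddempdeL^k(T))$ means $\od^k\fmu=0$ and $\odelta_k\fmu=0$; decomposing $\fmu$ as above, the $\od^k=0$ condition kills the $\okappa_T(\mathcal{P}_0\Lambda^{k+1})$ component, and then $\odelta_k=0$ forces a compensation between the $\star\okappa_T\star(\mathcal{P}_0\Lambda^{k-1})$ and $\mathcal{H}^2_\odelta(T)$ components (since $\odelta_k$ is bijective on each and annihilates $\mathcal{P}_0\Lambda^k$), leaving exactly the space $\mathcal{P}_0\Lambda^k(T)\oplus\{$the graph of that compensation$\}$, which by Lemma \ref{lem:surjecdde}(2) is $\R(\aoT,\mathbf{P}_{\odelta\times\od}^k(T))$ restricted appropriately; I would then match this against $\R(\aoT,\mathbf{P}_{\odelta\times\od}^k(T))=\R(\odelta_{k+1},\mathcal{P}^{*,-}_1\Lambda^{k+1}(T))+\R(\od^{k-1},\mathcal{P}^-_1\Lambda^{k-1}(T))$, using that $\odelta_{k+1}$ is onto $\mathcal{P}_0\Lambda^k(T)$ from $\mathcal{P}^{*,-}_1\Lambda^{k+1}(T)$ and onto $\okappa(\mathcal{P}_0\Lambda^k(T))$ and similarly for $\od^{k-1}$. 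Dimension counting on the simplex $T$, together with the fact that $\aoT$ is injective on the relevant complementary pieces, upgrades the inclusion to equality. Lemma \ref{lem:cell1-1} provides the dual characterization that makes this identification clean.

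For part (2), the inf–sup quantity is between $\R(\oT,\pddempdeL^k(T))$ and $\N(\aoT,\mathbf{P}_{\odelta\times\od}^k(T))$ inside the finite-dimensional space $L^2\Lambda^{k+1}(T)\times L^2\Lambda^{k-1}(T)$; the two expressions are equal by the standard symmetry of inf–sup constants, so it suffices to bound one from below by a positive constant $C_{k,n}$. Because everything is finite-dimensional and scale-homogeneous, positivity is equivalent to the statement that $\R(\oT,\pddempdeL^k(T))$ and $\N(\aoT,\mathbf{P}_{\odelta\times\od}^k(T))$ together span $L^2\Lambda^{k+1}(T)\times L^2\Lambda^{k-1}(T)$ (a direct-sum, transversality condition); a homogeneity/compactness argument on the unit sphere then yields a uniform-in-$h_T$ constant after the usual affine scaling to a reference simplex (the regularity of $T$ enters exactly here). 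To establish transversality I would again use the component analysis: $\R(\oT,\pddempdeL^k(T))$ contains $\mathcal{P}_0\Lambda^{k+1}(T)\times\mathcal{P}_0\Lambda^{k-1}(T)$ plus the graph-type piece coming from $\mathcal{H}^2_\odelta(T)$ landing in $\{0\}\times\okappa(\mathcal{P}_0\Lambda^k(T))$, whereas $\N(\aoT,\mathbf{P}_{\odelta\times\od}^k(T))=\N(\odelta_{k+1},\mathcal{P}^{*,-}_1\Lambda^{k+1}(T))\times\N(\od^{k-1},\mathcal{P}^-_1\Lambda^{k-1}(T))$ supplies the missing directions $\star\okappa\star(\mathcal{P}_0\Lambda^k(T))$ in the first factor and $\okappa(\mathcal{P}_0\Lambda^k(T))$ in the second; checking that the overlap is only $\{0\}$ is the decisive point. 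I expect the main obstacle to be precisely this transversality/dimension verification — making sure the $\mathcal{H}^2_\odelta(T)$ enrichment contributes exactly the one extra direction needed so that $\R(\oT,\cdot)$ and $\N(\aoT,\cdot)$ are complementary (not merely independent with a gap, and not overlapping), since the inf–sup constant degenerates to zero the instant these two subspaces fail to be transversal. Once transversality is in hand, Lemma \ref{lem:hfonT} (or directly Lemma \ref{lem:hfreq}) supplies the quantitative $h_T$-scaling that turns qualitative positivity into the stated estimate with a constant depending only on $k$, $n$ and the shape-regularity of $T$.
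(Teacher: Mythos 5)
Your overall strategy (component bookkeeping via Lemma \ref{lem:surjecdde}, then a finite-dimensional scaling argument) is in the right spirit, but two of your concrete identifications are wrong, and the second one breaks the proof of \eqref{eq:baseT}. In part (1) there is no nontrivial ``graph of compensation'' in $\N(\oT,\pddempdeL^k(T))$: $\odelta_k$ maps $\star\okappa_T\star(\mathcal{P}_0\Lambda^{k-1}(T))$ onto $\mathcal{P}_0\Lambda^{k-1}(T)$ and $\mathcal{H}^2_{\odelta}\Lambda^k(T)$ onto $\okappa_T(\mathcal{P}_0\Lambda^k(T))$, and these two images intersect only at $0$, so both components must vanish and the kernel is exactly $\mathcal{P}_0\Lambda^k(T)$; likewise $\odelta_{k+1}$ does \emph{not} map $\mathcal{P}^{*,-}_1\Lambda^{k+1}(T)$ onto anything in $\okappa(\mathcal{P}_0\Lambda^k(T))$ --- the range of $\aoT$ on $\mathbf{P}_{\odelta\times\od}^k(T)$ is just $\mathcal{P}_0\Lambda^k(T)$, which is how the paper settles (1) in one line.

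The serious gap is in part (2). You identify $\N(\aoT,\mathbf{P}_{\odelta\times\od}^k(T))$ with the product $\N(\odelta_{k+1},\mathcal{P}^{*,-}_1\Lambda^{k+1}(T))\times\N(\od^{k-1},\mathcal{P}^-_1\Lambda^{k-1}(T))=\mathcal{P}_0\Lambda^{k+1}(T)\times\mathcal{P}_0\Lambda^{k-1}(T)$, but the kernel of the \emph{sum} $\aoT(\feta,\ftau)=\odelta_{k+1}\feta+\od^{k-1}\ftau$ is strictly larger: it also contains the coupled pairs $\bigl((-1)^{kn}\tfrac{k}{n-k}\star\okappa_T\star\Lambda^{\ixalpha},\,\okappa_T\Lambda^{\ixalpha}\bigr)$ in which the two contributions cancel. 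These diagonal pairs are exactly what is needed to test the $\okappa_T(\mathcal{P}_0\Lambda^k(T))$ component of $\ftau$ in $\R(\oT,\pddempdeL^k(T))=\mathcal{P}_0\Lambda^{k+1}(T)\times\mathcal{P}^-_1\Lambda^{k-1}(T)$; with only the product of kernels the inf--sup would be $0$, since $(\boldsymbol 0,\okappa_T\Lambda^{\ixalpha})$ is $L^2$-orthogonal to all constant pairs. (Your sentence that the product-of-kernels ``supplies the missing directions $\star\okappa\star(\mathcal{P}_0\Lambda^k)$ and $\okappa(\mathcal{P}_0\Lambda^k)$'' is inconsistent with that identification --- those directions occur in $\N(\aoT,\cdot)$ only as coupled pairs.) Your positivity criterion is also not the right one: the two subspaces cannot ``together span'' the infinite-dimensional $L^2\Lambda^{k+1}(T)\times L^2\Lambda^{k-1}(T)$, and their intersection is \emph{not} $\{0\}$ (both contain $\mathcal{P}_0\Lambda^{k+1}(T)\times\mathcal{P}_0\Lambda^{k-1}(T)$, which is harmless --- indeed helpful --- for the inf--sup); what positivity requires is $\R(\oT,\cdot)\cap\N(\aoT,\cdot)^\perp=\{0\}$ together with equal dimensions, or, as the paper does, an explicit construction: for each $(\feta,\ftau)$ in the range choose $\fzeta=\feta$ plus the diagonal pair matching the $\okappa_T(\mathcal{P}_0\Lambda^k(T))$ part of $\ftau$, which gives $\langle\fzeta,\feta\rangle+\langle\fsigma,\ftau\rangle=\|\feta\|^2+\|\ftau\|^2$ with $\|\fzeta\|+\|\fsigma\|\leqslant C_{k,n}(\|\feta\|+\|\ftau\|)$ by shape regularity. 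As it stands, your verification step would either fail outright or certify the wrong condition, so the argument needs to be repaired along these lines before the compactness/scaling finish can be invoked.
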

\begin{proof}
For the first item, evidently, $\N(\oT,\pddempdeL^k(T))=\mathcal{P}_0\Lambda^k(T)=\R(\aoT,\mathbf{P}_{\odelta\times\od}^k(T))$.

For the second item, note that $\R(\oT,\pddempdeL^k(T))=\mathcal{P}_0\Lambda^{k+1}(T)\times \mathcal{P}^-_1\Lambda^{k-1}(T)$ and \begin{multline*}
\N(\aoT,\mathbf{P}_{\odelta\times\od}^k(T))=\mathcal{P}_0\Lambda^{k+1}(T)\times\mathcal{P}_0\Lambda^{k-1}(T)
\\
\oplus{\rm span}\left\{\left((-1)^{kn}\frac{k}{n-k}\star\okappa_T\star(\dx^{\ixalpha_1}\wedge\dots\wedge\dx^{\ixalpha_k}),\okappa_T(\dx^{\ixalpha_1}\wedge\dots\wedge\dx^{\ixalpha_k})\right),\ \ixalpha\in\mathbb{IX}_{k,n}\right\}.
\end{multline*}
Given $(\feta,\ftau)\in \R(\oT,\pddempdeL^k(T))$, $\feta\in\mathcal{P}_0\Lambda^{k+1}(T)$, and $\ftau=\ftau_0+\ftau_1$, such that $\ftau_0\in\mathcal{P}_0\Lambda^{k-1}(T)$ and $\ftau_1\in\okappa_T(\mathcal{P}_0\Lambda^k(T))$, we choose $\fzeta_0=\feta$, $\fsigma_0=\ftau_0$, and $(\fzeta_1,\fsigma_1)\in {\rm span}\Big\{\Big((-1)^{kn}k/(n-k)\star\okappa_T\star(\dx^{\ixalpha_1}\wedge\dots\wedge\dx^{\ixalpha_k}),\okappa_T(\dx^{\ixalpha_1}\wedge\dots\wedge\dx^{\ixalpha_k})\Big),\ \ixalpha\in\mathbb{IX}_{k,n}\Big\}$ such that $\fsigma_1=\ftau_1$, and set $(\fzeta,\fsigma)=(\fzeta_0+\fzeta_1,\fsigma_0+\fsigma_1)$. Then
$$
\langle\fzeta,\feta\rangle_{L^2\Lambda^{k+1}(T)}+\langle\fsigma,\ftau\rangle_{L^2\Lambda^{k-1}(T)}=\langle\feta,\feta\rangle_{L^2\Lambda^{k+1}(T)}+\langle\ftau,\ftau\rangle_{L^2\Lambda^{k-1}(T)},
$$
and 
$$
\|\fzeta\|_{L^2\Lambda^{k+1}(T)}+\|\fsigma\|_{L^2\Lambda^{k-1}(T)}\leqslant C_{k,n} (\|\feta\|_{L^2\Lambda^{k+1}(T)}+\|\ftau\|_{L^2\Lambda^{k-1}(T)}),
$$
for some $C_{k,n}$ depending on the shape regularity of $T$. This proves 
$$
\inf_{(\feta,\ftau)\in\R(\oT,\pddempdeL^k(T))}\sup_{(\fzeta,\fsigma)\in \N(\aoT,\mathbf{P}_{\odelta\times\od}^k(T))}\frac{\langle\fzeta,\feta\rangle_{L^2\Lambda^{k+1}(T)}+\langle\fsigma,\ftau\rangle_{L^2\Lambda^{k-1}(T)}}{(\|\fzeta\|_{L^2\Lambda^{k+1}(T)}+\|\fsigma\|_{L^2\Lambda^{k-1}(T)})(\|\feta\|_{L^2\Lambda^{k+1}(T)}+\|\ftau\|_{L^2\Lambda^{k-1}(T)})}\geqslant C_{k,n}>0. 
$$
The other part of the assertion follows the same way. The proof is completed.  
\end{proof}
Denote $\displaystyle\pddempdeL^k(\mathcal{G}_h):=\prod_{T\in\mathcal{G}_h}\pddempdeL^k(T)$ and $\displaystyle\mathbf{P}_{\odelta\times\od}^k(\mathcal{G}_h):=\prod_{T\in\mathcal{G}_h}\mathbf{P}_{\odelta\times\od}^k(T).$

\begin{lemma}
The pair $\Big[(\oT_h,\pddempdeL^k(\mathcal{G}_h)):L^2\Lambda^k\to L^2\Lambda^{k+1}\times L^2\Lambda^{k-1}, (\aoT_h,\mathbf{P}_{\odelta\times\od}^k(\mathcal{G}_h)):L^2\Lambda^{k+1}\times L^2\Lambda^{k-1}\to L^2\Lambda^k\Big]$ is a base operator pair. 
\end{lemma}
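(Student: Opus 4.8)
The plan is to reduce everything to the cell-local statements in Lemmas \ref{lem:cell1-1} and \ref{lem:isocellT}, exploiting that both $\pddempdeL^k(\mathcal{G}_h)$ and $\mathbf{P}_{\odelta\times\od}^k(\mathcal{G}_h)$ are direct products of cell-wise spaces carrying no inter-element constraints, and that $\oT_h$, $\aoT_h$ act cell by cell. Since all the spaces occurring in Definition \ref{def:basepair} are then finite-dimensional, condition (1) — closedness of $\R(\oT_h,\pddempdeL^k(\mathcal{G}_h))$, of $\R(\aoT_h,\mathbf{P}_{\odelta\times\od}^k(\mathcal{G}_h))$, and of the two ranges over the untwisted parts — is automatic, so the real content is the computation of the twisted parts and the two isomorphisms of condition (2).

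First I would identify the untwisted parts $\uxM$ and $\uyN$ of \eqref{eq:uxm}--\eqref{eq:uyn}. Because the $L^2$ pairings split as sums over $T\in\mathcal{G}_h$ and a test form may be taken supported on a single cell, the defining conditions decouple: writing $B_T(\fmu,(\feta,\ftau))=\langle\od^k\fmu,\feta\rangle_{L^2\Lambda^{k+1}(T)}+\langle\odelta_k\fmu,\ftau\rangle_{L^2\Lambda^{k-1}(T)}-\langle\fmu,\odelta_{k+1}\feta+\od^{k-1}\ftau\rangle_{L^2\Lambda^k(T)}$, one gets that $\uxM$ is the product over $T$ of the left radical of $B_T$ on $\pddempdeL^k(T)\times\mathbf{P}_{\odelta\times\od}^k(T)$, and $\uyN$ the product of the right radicals. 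Lemma \ref{lem:cell1-1} says the left radical is trivial; combined with the dimension identity $\dim\pddempdeL^k(T)=\dim\mathbf{P}_{\odelta\times\od}^k(T)=2\binom{n}{k}+\binom{n}{k+1}+\binom{n}{k-1}$ (immediate for the second space, and following for the first from Lemma \ref{lem:surjecdde} and the direct-sum structure of $\pddempdeL^k(T)$ established in \cite{Zhang.S2022primalddelta-arxiv}), $B_T$ is a perfect pairing, so its right radical is trivial as well. Hence $\uxM=\uyN=\{0\}$; consequently the defining conditions in \eqref{eq:xmb}--\eqref{eq:ynb} are vacuous, so $\xM_{\rm B}=\pddempdeL^k(\mathcal{G}_h)$ and $\yN_{\rm B}=\mathbf{P}_{\odelta\times\od}^k(\mathcal{G}_h)$, and $\R(\oT_h,\uxM)=\R(\aoT_h,\uyN)=\{0\}$ is closed, finishing condition (1).

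It then remains to verify the two isomorphisms of condition (2). For the first, taking the product over $T$ of the cell-wise identity $\N(\oT,\pddempdeL^k(T))=\R(\aoT,\mathbf{P}_{\odelta\times\od}^k(T))=\mathcal{P}_0\Lambda^k(T)$ from Lemma \ref{lem:isocellT}(1) gives $\N(\oT_h,\xM_{\rm B})=\mathcal{P}_0\Lambda^k(\mathcal{G}_h)=\R(\aoT_h,\yN_{\rm B})$; being literally equal, they are isomorphic. For the second, the cell-wise two-sided inf-sup \eqref{eq:baseT} of Lemma \ref{lem:isocellT}(2) states that the $L^2$ duality pairing between $\R(\oT,\pddempdeL^k(T))$ and $\N(\aoT,\mathbf{P}_{\odelta\times\od}^k(T))$ is nondegenerate with a uniform positive constant, hence in finite dimensions induces an isomorphism of these two cell-local spaces; since $\R(\oT_h,\xM_{\rm B})$ and $\N(\aoT_h,\yN_{\rm B})$ are the corresponding products over $\mathcal{G}_h$ and the pairing is block-diagonal across cells, the inf-sup, and hence the isomorphism, persists globally. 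This verifies Definition \ref{def:basepair}.

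The only genuinely non-formal ingredient is the nondegeneracy of the cell-local form $B_T$ in its \emph{second} argument, equivalently the dimension identity for $\pddempdeL^k(T)$; I expect that to be the one point needing care. Everything else is a routine globalization of Lemmas \ref{lem:cell1-1}--\ref{lem:isocellT}, together with the elementary observations that in finite dimensions all subspaces are closed and that a bilinear form obeying a two-sided inf-sup is a perfect duality pairing.
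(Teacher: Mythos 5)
Your proposal is correct and takes essentially the same route as the paper: both reduce everything to the cell-local Lemmas \ref{lem:cell1-1} and \ref{lem:isocellT}, observe that the twisted part of the pair is the pair itself, and then globalize the cell-wise kernel/range identity and the inf-sup bound over the product structure to verify Definition \ref{def:basepair}. The only minor difference is that you establish triviality of the right radical of the cell pairing by a dimension count (equal dimensions plus Lemma \ref{lem:cell1-1}), where the paper simply asserts the symmetric statement ``similarly,'' and you make explicit the finite-dimensional closedness of the ranges, which the paper leaves implicit.
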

\begin{proof}
Firstly, given $\fmu_h\in \pddempdeL^k(\mathcal{G}_h)$, $\fmu_h=0$ if and only if 
$$
\sum_{T\in\mathcal{G}_h}
\langle\od^k\fmu_h,\feta_h\rangle_{L^2\Lambda^{k+1}(T)}+\langle\odelta_k\fmu_h,\ftau_h\rangle_{L^2\Lambda^{k-1}(T)}-\langle\fmu_h,(\odelta_{k+1}\feta_h+\od^{k-1}\ftau_h)\rangle_{L^2\Lambda^k(T)}=0,\ \forall\,(\feta_h,\ftau_h)\in \mathbf{P}_{\odelta\times\od}^k(\mathcal{G}_h).
$$ 
Similarly, given $(\feta_h,\ftau_h)\in \mathbf{P}_{\odelta\times\od}^k(\mathcal{G}_h)$, $(\feta_h,\ftau_h)=(0,0)$ if and only if
$$
\sum_{T\in\mathcal{G}_h}
\langle\od^k\fmu_h,\feta\rangle_{L^2\Lambda^{k+1}(T)}+\langle\odelta_k\fmu_h,\ftau\rangle_{L^2\Lambda^{k-1}(T)}-\langle\fmu_h,(\odelta_{k+1}\feta+\od^{k-1}\ftau)\rangle_{L^2\Lambda^k(T)}=0,\ \forall\,\fmu_h\in \pddempdeL^k(\mathcal{G}_h).
$$ 
Therefore, the twisted part of $(\pddempdeL^k(\mathcal{G}_h),\mathbf{P}_{\odelta\times\od}^k(\mathcal{G}_h))$ is the pair itself. 

It is easy to obtain:
$$
\R(\oT_h,\pddempdeL^k(\mathcal{G}_h))=\prod_{T\in\mathcal{G}_h}\R(\oT,\pddempdeL^k(T)),\ \ 
\N(\oT_h,\pddempdeL^k(\mathcal{G}_h))=\prod_{T\in\mathcal{G}_h}\N(\oT,\pddempdeL^k(T)),
$$
$$
\R(\aoT_h,\mathbf{P}_{\odelta\times\od}^k(\mathcal{G}_h))=\prod_{T\in\mathcal{G}_h}\R(\aoT,\mathbf{P}_{\odelta\times\od}^k(T)),\ \mbox{and} \ \  \N(\aoT_h,\mathbf{P}_{\odelta\times\od}^k(\mathcal{G}_h))=\prod_{T\in\mathcal{G}_h}\N(\aoT,\mathbf{P}_{\odelta\times\od}^k(T)).
$$
Therefore, by Lemma \ref{lem:isocellT}, 
$$
\N(\oT_h,\pddempdeL^k(\mathcal{G}_h))=\R(\aoT_h,\mathbf{P}_{\odelta\times\od}^k(\mathcal{G}_h)),
$$
and 
\begin{multline*}
\inf_{(\feta_h,\ftau_h)\in\R(\oT_h,\pddempdeL^k(\mathcal{G}_h))}\sup_{(\fzeta_h,\fsigma_h)\in \N(\aoT_h,\mathbf{P}_{\odelta\times\od}^k(\mathcal{G}_h))}\frac{\langle\fzeta_h,\feta_h\rangle_{L^2\Lambda^{k+1}}+\langle\fsigma_h,\ftau_h\rangle_{L^2\Lambda^{k-1}}}{(\|\fzeta_h\|_{L^2\Lambda^{k+1}}+\|\fsigma_h\|_{L^2\Lambda^{k-1}})(\|\feta_h\|_{L^2\Lambda^{k+1}}+\|\ftau_h\|_{L^2\Lambda^{k-1}})}
\\
=\inf_{(\fzeta_h,\fsigma_h)\in \N(\aoT_h,\mathbf{P}_{\odelta\times\od}^k(\mathcal{G}_h))}\sup_{(\feta_h,\ftau_h)\in\R(\oT_h,\pddempdeL^k(\mathcal{G}_h))}\frac{\langle\fzeta_h,\feta_h\rangle_{L^2\Lambda^{k+1}}+\langle\fsigma_h,\ftau_h\rangle_{L^2\Lambda^{k-1}}}{(\|\fzeta_h\|_{L^2\Lambda^{k+1}}+\|\fsigma_h\|_{L^2\Lambda^{k-1}})(\|\feta_h\|_{L^2\Lambda^{k+1}}+\|\ftau_h\|_{L^2\Lambda^{k-1}})}
\\
=\inf_{T\in\mathcal{G}_h}\left(\inf_{(\fzeta,\fsigma)\in \N(\aoT,\mathbf{P}_{\odelta\times\od}^k(T))}\sup_{(\feta,\ftau)\in\R(\oT,\pddempdeL^k(T))}\frac{\langle\fzeta,\feta\rangle_{L^2\Lambda^{k+1}(T)}+\langle\fsigma,\ftau\rangle_{L^2\Lambda^{k-1}(T)}}{(\|\fzeta\|_{L^2\Lambda^{k+1}(T)}+\|\fsigma\|_{L^2\Lambda^{k-1}(T)})(\|\feta\|_{L^2\Lambda^{k+1}(T)}+\|\ftau\|_{L^2\Lambda^{k-1}(T)})}\right)\geqslant C_{k,n}.
\end{multline*}
Therefore $\Big[(\oT_h,\pddempdeL^k(\mathcal{G}_h)):L^2\Lambda^k\to L^2\Lambda^{k+1}\times L^2\Lambda^{k-1}, (\aoT_h,\mathbf{P}_{\odelta\times\od}^k(\mathcal{G}_h)):L^2\Lambda^{k+1}\times L^2\Lambda^{k-1}\to L^2\Lambda^k\Big]$ is a base operator pair by Definition \ref{def:basepair}. The proof is completed. 
\end{proof}

\begin{lemma}
For $\left[(\oT_h,\sD_h),(\aoT_h,\asD_h)\right]$ partially adjoint based on $[(\oT_h,\pddempdeL^k(\mathcal{G}_h)), (\aoT_h,\mathbf{P}_{\odelta\times\od}^k(\mathcal{G}_h))]$,  if $\icr(\aoT_h,\asD_h)<\infty$,
\begin{equation}
\icr(\oT_h,\sD_h)\leqslant 2\cdot\icr(\oT_h,\pddempdeL^k(\mathcal{G}_h))+\icr(\aoT_h,\asD_h).
\end{equation}
\end{lemma}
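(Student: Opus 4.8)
The plan is to read the inequality directly off the quantified closed range theorem, Theorem~\ref{thm:chpi}, applied to the base operator pair $[(\oT_h,\pddempdeL^k(\mathcal{G}_h)),(\aoT_h,\mathbf{P}_{\odelta\times\od}^k(\mathcal{G}_h))]$ established just above, after observing that the two structural quantities occurring there collapse to trivial values, namely $\alpha=1$ and $\icr(\oT_h,\uxM)=0$. With these substitutions the first inequality of Theorem~\ref{thm:chpi} reads $\icr(\oT_h,\sD_h)\leqslant(1+\alpha^{-1})\,\icr(\oT_h,\pddempdeL^k(\mathcal{G}_h))+\alpha^{-1}\,\icr(\aoT_h,\asD_h)+\icr(\oT_h,\uxM)=2\,\icr(\oT_h,\pddempdeL^k(\mathcal{G}_h))+\icr(\aoT_h,\asD_h)$, which is exactly the assertion; and the hypothesis $\icr(\aoT_h,\asD_h)<\infty$ is precisely the one that branch of Theorem~\ref{thm:chpi} requires.

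To see $\alpha=1$: in the proof that the pair above is a base operator pair it was shown that its twisted part $(\xM_{\rm B},\yN_{\rm B})$ equals the pair itself, and that $\mathcal{N}(\oT_h,\pddempdeL^k(\mathcal{G}_h))=\mathcal{R}(\aoT_h,\mathbf{P}_{\odelta\times\od}^k(\mathcal{G}_h))$, both being $\mathcal{P}_0\Lambda^k(\mathcal{G}_h)$ by Lemma~\ref{lem:isocellT} applied cell by cell. Hence in the definition \eqref{eq:alpha} the infimum and the supremum run over one and the same subspace $S$ of $L^2\Lambda^k$; if $S=\{0\}$ then $\alpha=1$ by the convention \eqref{eq:alphabeta=1}, and otherwise, for each nonzero $\vv\in S$ the choice $\vw=\vv\in S$ realises the ratio $1$ in the supremum while Cauchy--Schwarz bounds it above by $1$, so $\alpha=1$ in all cases.

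To see $\icr(\oT_h,\uxM)=0$: by \eqref{eq:uxm}, $\uxM$ consists of those $\fmu_h\in\pddempdeL^k(\mathcal{G}_h)$ for which $\sum_{T\in\mathcal{G}_h}\big(\langle\fmu_h,\odelta_{k+1}\feta_h+\od^{k-1}\ftau_h\rangle_{L^2\Lambda^k(T)}-\langle\od^k\fmu_h,\feta_h\rangle_{L^2\Lambda^{k+1}(T)}-\langle\odelta_k\fmu_h,\ftau_h\rangle_{L^2\Lambda^{k-1}(T)}\big)$ vanishes for all $(\feta_h,\ftau_h)\in\mathbf{P}_{\odelta\times\od}^k(\mathcal{G}_h)$; since both of these spaces split as products over the cells, Lemma~\ref{lem:cell1-1} applied on each $T$ forces $\fmu_h=0$, whence $\uxM=\{0\}$ and $\icr(\oT_h,\uxM)=0$ by \eqref{eq:deficr}.

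I expect no genuine obstacle here, since the substantive work — that the pair is a base operator pair, that the relevant kernels and ranges coincide, and the cellwise nondegeneracy of the pairing in Lemma~\ref{lem:cell1-1} — has already been carried out; the only points needing care are matching the sign convention of the pairing in \eqref{eq:uxm} against that in Lemma~\ref{lem:cell1-1}, and confirming that $\alpha=1$ regardless of whether $\mathcal{N}(\oT_h,\pddempdeL^k(\mathcal{G}_h))$ happens to be trivial.
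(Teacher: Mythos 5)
Your proposal is correct and follows essentially the same route as the paper's own (one-line) proof: apply the first branch of Theorem \ref{thm:chpi} to the base pair $[(\oT_h,\pddempdeL^k(\mathcal{G}_h)),(\aoT_h,\mathbf{P}_{\odelta\times\od}^k(\mathcal{G}_h))]$ with $\alpha=1$ and $\uxM=\{0\}$, so that $\icr(\oT_h,\uxM)=0$ and the constant becomes $1+\alpha^{-1}=2$. The details you supply --- that $\alpha=1$ because $\mathcal{N}(\oT_h,\pddempdeL^k(\mathcal{G}_h))$ and $\mathcal{R}(\aoT_h,\mathbf{P}_{\odelta\times\od}^k(\mathcal{G}_h))$ coincide (both equal to $\mathcal{P}_0\Lambda^k(\mathcal{G}_h)$, so the inf-sup is attained by $\vw=\vv$ and bounded by Cauchy--Schwarz), and that $\uxM=\{0\}$ by the cellwise unisolvence of Lemma \ref{lem:cell1-1} --- are exactly the facts the paper leaves implicit from the preceding base-operator-pair lemma.
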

\begin{proof}
The lemma can be proved by noting $(\oT,\txM)=(\oT_h,\pddempdeL^k(\mathcal{G}_h))$, $(\aoT,\tyN)=(\aoT_h,\mathbf{P}_{\odelta\times\od}^k(\mathcal{G}_h))$ and $\uxM=\{0\}$ in Theorem \ref{thm:chpi} with $\alpha=1$. 
\end{proof}

\subsection{Finite element space $\fV_{\od\cap\odelta}^{\rm m, +\odelta}\Lambda^k$ and discrete Poincar\'e inequality}
On the subdivision $\mathcal{G}_h$ of $\Omega$, define 
\begin{multline}\label{eq:deffems}
\fV_{\od\cap\odelta}^{\rm m, +\odelta}\Lambda^k:=\Big\{\fmu_h\in\pddempdeL^k(\mathcal{G}_h):
\langle\od^k_h\fmu_h,\feta_h\rangle_{L^2\Lambda^{k+1}}-\langle\fmu_h,\odelta_{k+1,h}\feta_h\rangle_{L^2\Lambda^k}=0,\ \forall\,\feta_h\in\fW^{*,\rm abc}_{h0}\Lambda^{k+1},
\\
\mbox{and}\ \ \langle\odelta_{k,h}\fmu_h,\ftau_h\rangle_{L^2\Lambda^{k-1}}-\langle\fmu_h,\od^{k-1}\ftau_h\rangle_{L^2\Lambda^k}=0,\ \forall\,\ftau_h\in\fW_h\Lambda^{k-1}
\Big\}.
\end{multline}

\begin{lemma}
The pair $\left[(\oT_h,\fV_{\od\cap\odelta}^{\rm m, +\odelta}\Lambda^k),(\aoT_h,\fW^{*,\rm abc}_{h0}\Lambda^{k+1}\times\fW_h\Lambda^{k-1})\right]$ is partially adjoint based on $\left[(\oT_h,\pddempdeL^k(\mathcal{G}_h)), (\aoT_h,\mathbf{P}_{\odelta\times\od}^k(\mathcal{G}_h))\right]$
\end{lemma}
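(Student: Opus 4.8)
The plan is to check the two defining identities of Definition~\ref{def:pao} directly, with $(\oT,\txM)=(\oT_h,\pddempdeL^k(\mathcal{G}_h))$, $(\aoT,\tyN)=(\aoT_h,\mathbf{P}_{\odelta\times\od}^k(\mathcal{G}_h))$, $\sD=\fV_{\od\cap\odelta}^{\rm m, +\odelta}\Lambda^k$ and $\asD=\fW^{*,\rm abc}_{h0}\Lambda^{k+1}\times\fW_h\Lambda^{k-1}$; here $\sD\subset\txM$ and $\asD\subset\tyN$ by construction, so the operator inclusions demanded by Definition~\ref{def:pao} hold automatically. For $\fmu_h\in\txM$ and $(\feta_h,\ftau_h)\in\tyN$ set $b(\fmu_h,(\feta_h,\ftau_h)):=\langle\fmu_h,\aoT_h(\feta_h,\ftau_h)\rangle_{L^2\Lambda^k}-\langle\oT_h\fmu_h,(\feta_h,\ftau_h)\rangle_{L^2\Lambda^{k+1}\times L^2\Lambda^{k-1}}$. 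First I would expand $b$ cellwise: since $\aoT_h(\feta_h,\ftau_h)=\odelta_{k+1,h}\feta_h+\od^{k-1}\ftau_h$ and $\oT_h\fmu_h=(\od^k_h\fmu_h,\odelta_{k,h}\fmu_h)$, it splits into a $\feta_h$-part $\sum_T(\langle\fmu_h,\odelta_{k+1}\feta_h\rangle_{L^2\Lambda^k(T)}-\langle\od^k\fmu_h,\feta_h\rangle_{L^2\Lambda^{k+1}(T)})$ and a $\ftau_h$-part $\sum_T(\langle\fmu_h,\od^{k-1}\ftau_h\rangle_{L^2\Lambda^k(T)}-\langle\odelta_k\fmu_h,\ftau_h\rangle_{L^2\Lambda^{k-1}(T)})$. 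Because $\asD$ is a Cartesian product, a vector $(\feta_h,\ftau_h)$ exhausts $\asD$ precisely when $(\feta_h,0)$ and $(0,\ftau_h)$ exhaust the two factors independently, so the condition $b(\fmu_h,\cdot)\equiv0$ on $\asD$ is equivalent (up to sign) to the two requirements in \eqref{eq:deffems}. That is exactly the first identity $\sD=\{\fmu_h\in\txM:b(\fmu_h,\cdot)|_{\asD}=0\}$ of Definition~\ref{def:pao}.

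For the second identity I would begin with the easy inclusion: if $\fmu_h\in\fV_{\od\cap\odelta}^{\rm m, +\odelta}\Lambda^k$ and $(\feta_h,\ftau_h)\in\asD$, the two relations defining $\fV_{\od\cap\odelta}^{\rm m, +\odelta}\Lambda^k$ annihilate both the $\feta_h$-part and the $\ftau_h$-part of $b$, so $b(\fmu_h,(\feta_h,\ftau_h))=0$; hence $\asD\subset\asD':=\{(\feta_h,\ftau_h)\in\tyN:b(\fmu_h,(\feta_h,\ftau_h))=0\ \forall\,\fmu_h\in\sD\}$, and what remains is to prove $\dim\asD'=\dim\asD$. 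For this I would invoke a dimension count. All spaces here are finite dimensional, and the preceding lemma, stating that $[(\oT_h,\pddempdeL^k(\mathcal{G}_h)),(\aoT_h,\mathbf{P}_{\odelta\times\od}^k(\mathcal{G}_h))]$ is a base operator pair, already establishes that the twisted part of $(\pddempdeL^k(\mathcal{G}_h),\mathbf{P}_{\odelta\times\od}^k(\mathcal{G}_h))$ is the pair itself; concretely this says $b(\cdot,(\feta_h,\ftau_h))\equiv0$ forces $(\feta_h,\ftau_h)=0$ and $b(\fmu_h,\cdot)\equiv0$ forces $\fmu_h=0$ (the cellwise content being Lemma~\ref{lem:cell1-1}). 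Therefore $b$ is a perfect pairing between $\txM$ and $\tyN$, so $\dim\txM=\dim\tyN=:N$ and, for any subspace $S$ of one factor, its annihilator in the other has dimension $N-\dim S$. Using the first identity, $\dim\sD=N-\dim\asD$, and therefore $\dim\asD'=N-\dim\sD=\dim\asD$; combined with $\asD\subset\asD'$ this yields $\asD=\asD'$, which is the second identity, and the pair is partially adjoint.

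I do not anticipate a genuine obstacle: the first step is a routine integration-by-parts-type expansion together with the product structure of $\asD$, and the second step is soft finite-dimensional linear algebra whose only substantive input, the nondegeneracy of $b$ on each factor, is already proved. The one thing to keep in view is that this nondegeneracy is exactly the statement that the twisted part of the base operator pair equals the pair itself, which in turn reduces cellwise to Lemma~\ref{lem:cell1-1}, so nothing new has to be shown. If one prefers, the whole conclusion can instead be quoted from the general correspondence in \cite{Zhang.S2022padao-arxiv} between subspaces of $\tyN$ and partially adjoint operators over a base operator pair whose twisted part is the pair itself.
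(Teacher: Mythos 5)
Your proposal is correct, and its first half is exactly the paper's argument: the first identity of Definition~\ref{def:pao} is nothing but a restatement of the definition \eqref{eq:deffems}, using the product structure of $\fW^{*,\rm abc}_{h0}\Lambda^{k+1}\times\fW_h\Lambda^{k-1}$ to split the pairing into its $\feta_h$- and $\ftau_h$-parts. Where you diverge is the second identity. The paper disposes of it in one line by citing Lemma~\ref{lem:cell1-1} (cellwise unisolvence of $\pddempdeL^k(T)$ against $\mathbf{P}_{\odelta\times\od}^k(T)$), leaving the reverse inclusion implicit; you instead prove it by a double-annihilator dimension count: the trivial inclusion $\asD\subset\asD'$, plus the fact that the broken pairing is nondegenerate in both arguments (which the paper's base-operator-pair lemma states explicitly, its cellwise content being Lemma~\ref{lem:cell1-1} and its symmetric counterpart), so that annihilators under the resulting perfect pairing have complementary dimension and $\asD'=\asD$ follows. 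This is sound and, in fact, makes precise what the paper's citation glosses over; the one ingredient worth stating explicitly if you write it up is the cellwise dimension equality
\begin{equation*}
\dim\pddempdeL^k(T)=\dim\mathbf{P}_{\odelta\times\od}^k(T)=2\binom{n}{k}+\binom{n}{k+1}+\binom{n}{k-1},
\end{equation*}
which, combined with Lemma~\ref{lem:cell1-1}, already yields nondegeneracy in the second argument and hence perfectness of the pairing (the global statement then follows cell by cell since both broken spaces are products over $\mathcal{G}_h$ and the pairing has no cross terms). So your route buys a self-contained, purely finite-dimensional justification of the harder inclusion, at the cost of the dimension bookkeeping; the paper's route is shorter but leans on the general correspondence from \cite{Zhang.S2022padao-arxiv}, which you correctly note could also be quoted directly.
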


\begin{proof}
By the definition \eqref{eq:deffems}, 
\begin{multline*}
\fV_{\od\cap\odelta}^{\rm m, +\odelta}\Lambda^k=\Big\{\fmu_h\in\pddempdeL^k(\mathcal{G}_h):\langle\oT_h\fmu_h,(\feta_h,\ftau_h)\rangle-\langle\fmu_h,\aoT_h(\feta_h,\ftau_h)\rangle=0,
\\ 
\forall\,(\feta_h,\ftau_h)\in \fW^{*,\rm abc}_{h0}\Lambda^{k+1}\times\fW_h\Lambda^{k-1}\Big\}.\qquad
\end{multline*}
On the other hand, by Lemma \ref{lem:cell1-1}, 
$$
\fW^{*,\rm abc}_{h0}\Lambda^{k+1}\times\fW_h\Lambda^{k-1}=\Big\{(\feta_h,\ftau_h)\in \mathbf{P}_{\odelta\times\od}^k(\mathcal{G}_h):
\langle\oT_h\fmu_h,(\feta_h,\ftau_h)\rangle-\langle\fmu_h,\aoT_h(\feta_h,\ftau_h)\rangle=0,\ \fmu_h\in\fV_{\od\cap\odelta}^{\rm m, +\odelta}\Lambda^k\Big\}.
$$
The assertion follows by Definition \ref{def:pao}. The proof is completed. 
\end{proof}

\begin{lemma}\label{lem:disharm}
$\N(\oT_h,\fV_{\od\cap\odelta}^{\rm m, +\odelta}\Lambda^k)=\hf_h\Lambda^k$.
\end{lemma}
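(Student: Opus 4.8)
The plan is to establish the two inclusions separately, using the local rigidity estimate of Lemma~\ref{lem:hfonT} together with the Hodge decomposition of Lemma~\ref{lem:hoddecconst}. The guiding observation is that a form $\fmu_h\in\pddempdeL^k(\mathcal{G}_h)$ annihilated by $\oT_h$ must in fact be piecewise constant: on each $T\in\mathcal{G}_h$ one has $\fmu_h|_T\in\pddempdeL^k(T)$, and since $\pddempdeL^k(T)$ is contained in the space appearing in Lemma~\ref{lem:hfonT}, its second estimate gives $\|\fmu_h-\mathbf{P}^k_{0,T}\fmu_h\|_{L^2\Lambda^k(T)}\leqslant C_{k,n}h_T(\|\odelta_k\fmu_h\|_{L^2\Lambda^{k-1}(T)}+\|\od^k\fmu_h\|_{L^2\Lambda^{k+1}(T)})=0$, whence $\fmu_h\in\mathcal{P}_0\Lambda^k(\mathcal{G}_h)$. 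Conversely, for any piecewise constant $k$-form the piecewise operators vanish, $\od^k_h\fmu_h=0$ and $\odelta_{k,h}\fmu_h=0$, so on $\mathcal{P}_0\Lambda^k(\mathcal{G}_h)$ the two defining relations of $\fV_{\od\cap\odelta}^{\rm m, +\odelta}\Lambda^k$ in \eqref{eq:deffems} collapse, respectively, to the $L^2$-orthogonality conditions $\fmu_h\perp\R(\odelta_{k+1,h},\fW^{*,\rm abc}_{h0}\Lambda^{k+1})$ and $\fmu_h\perp\R(\od^{k-1},\fW_h\Lambda^{k-1})$, using here that $\fW_h\Lambda^{k-1}\subset H\Lambda^{k-1}$ so $\od^{k-1}_h=\od^{k-1}$ on that space.

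For the inclusion $\N(\oT_h,\fV_{\od\cap\odelta}^{\rm m, +\odelta}\Lambda^k)\subseteq\hf_h\Lambda^k$: given $\fmu_h$ in this kernel, the observation above shows $\fmu_h\in\mathcal{P}_0\Lambda^k(\mathcal{G}_h)$, and then membership of $\fmu_h$ in $\fV_{\od\cap\odelta}^{\rm m, +\odelta}\Lambda^k$ forces the two orthogonality relations $\fmu_h\perp\R(\odelta_{k+1,h},\fW^{*,\rm abc}_{h0}\Lambda^{k+1})$ and $\fmu_h\perp\R(\od^{k-1},\fW_h\Lambda^{k-1})$. By the Hodge decomposition $\mathcal{P}_0\Lambda^k(\mathcal{G}_h)=\R(\od^{k-1},\fW_h\Lambda^{k-1})\opp\hf_h\Lambda^k\opp\R(\odelta_{k+1,h},\fW^{*,\rm abc}_{h0}\Lambda^{k+1})$ of Lemma~\ref{lem:hoddecconst}, $\fmu_h$ then lies in the middle summand, i.e. $\fmu_h\in\hf_h\Lambda^k$.

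For the reverse inclusion $\hf_h\Lambda^k\subseteq\N(\oT_h,\fV_{\od\cap\odelta}^{\rm m, +\odelta}\Lambda^k)$: Lemma~\ref{lem:hoddecconst} exhibits $\hf_h\Lambda^k$ as an orthogonal summand of $\mathcal{P}_0\Lambda^k(\mathcal{G}_h)$, so any $\fmu_h\in\hf_h\Lambda^k$ is piecewise constant; in particular $\fmu_h\in\mathcal{P}_0\Lambda^k(\mathcal{G}_h)\subset\pddempdeL^k(\mathcal{G}_h)$ and $\oT_h\fmu_h=(\od^k_h\fmu_h,\odelta_{k,h}\fmu_h)=(0,0)$. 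It then remains only to verify the two defining relations of $\fV_{\od\cap\odelta}^{\rm m, +\odelta}\Lambda^k$; by the reduction of the first paragraph these are precisely $\fmu_h\perp\R(\odelta_{k+1,h},\fW^{*,\rm abc}_{h0}\Lambda^{k+1})$ and $\fmu_h\perp\R(\od^{k-1},\fW_h\Lambda^{k-1})$, both of which hold for $\fmu_h\in\hf_h\Lambda^k$ because $\opp$ in Lemma~\ref{lem:hoddecconst} denotes an orthogonal direct sum. Hence $\fmu_h\in\fV_{\od\cap\odelta}^{\rm m, +\odelta}\Lambda^k$ and, being in the kernel of $\oT_h$, $\fmu_h\in\N(\oT_h,\fV_{\od\cap\odelta}^{\rm m, +\odelta}\Lambda^k)$.

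The crux, rather than an obstacle, is the local rigidity furnished by Lemma~\ref{lem:hfonT}: it is what forces a form in the discrete kernel to collapse onto $\mathcal{P}_0\Lambda^k(\mathcal{G}_h)$, after which the statement is simply a reading of the already-established Hodge decomposition of $\mathcal{P}_0\Lambda^k(\mathcal{G}_h)$. The only bookkeeping one must be mildly careful about is to distinguish the piecewise operators $\od^k_h,\odelta_{k,h}$ from the global $\od^k,\odelta_k$ and to record that they coincide on the conforming test space $\fW_h\Lambda^{k-1}$, but no genuine difficulty arises there.
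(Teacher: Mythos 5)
Your proof is correct and follows essentially the same route as the paper: reduce the kernel to piecewise constants, observe that the defining relations of $\fV_{\od\cap\odelta}^{\rm m, +\odelta}\Lambda^k$ collapse to orthogonality against $\R(\od^{k-1},\fW_h\Lambda^{k-1})$ and $\R(\odelta_{k+1,h},\fW^{*,\rm abc}_{h0}\Lambda^{k+1})$, and read off the conclusion from the discrete Hodge decomposition of Lemma~\ref{lem:hoddecconst}. The only cosmetic difference is that you obtain the local rigidity (kernel elements are cellwise constant) from the estimate of Lemma~\ref{lem:hfonT}, whereas the paper takes it as the already-recorded identity $\N(\oT,\pddempdeL^k(T))=\mathcal{P}_0\Lambda^k(T)$ from Lemma~\ref{lem:isocellT}; both are legitimate.
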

\begin{proof}
By Lemma \ref{lem:hoddecconst}, 
\begin{multline*}
\hf_h\Lambda^k=\Big\{\fmu_h\in\mathcal{P}_0\Lambda^k(\mathcal{G}_h): \langle\fmu_h,\odelta_{k+1,h}\feta_h\rangle_{L^2\Lambda^k}=0,\ \forall\,\feta_h\in\fW^{*,\rm abc}_{h0}\Lambda^{k+1},
\\ 
\mbox{and}\ \langle\fmu_h,\od^{k-1}\ftau_h\rangle_{L^2\Lambda^k}=0,\ \forall\,\ftau_h\in\fW_h\Lambda^{k-1}\Big\}=\left\{\fmu_h\in\mathcal{P}_0\Lambda^k(\mathcal{G}_h): \fmu_h\in \fV_{\od\cap\odelta}^{\rm m, +\odelta}\Lambda^k\right\}. \ \mbox{(by\ \eqref{eq:deffems})}
\end{multline*}
The proof is completed. 
\end{proof}

\begin{lemma}\label{lem:pifes}
There exists a constant $C_{k,n}$, depending on the regularity of $\mathcal{G}_h$, such that 
\begin{equation}
\|\fmu_h\|_{L^2\Lambda^k}\leqslant C_{k,n}(\|\od^k_h\fmu_h\|_{L^2\Lambda^{k+1}}+\|\odelta_{k,h}\fmu_h\|_{L^2\Lambda^{k-1}}),\ \ \forall\,\fmu_h\in \fV_{\od\cap\odelta}^{\rm m, +\odelta}\Lambda^k\omp \hf_h\Lambda^k.
\end{equation}
\end{lemma}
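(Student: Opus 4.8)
The discrete Poincaré inequality of Lemma \ref{lem:pifes} is exactly an $\icr$ bound: it asserts $\icr(\oT_h,\sD_h)\leqslant C_{k,n}$ for the partially adjoint operator $(\oT_h,\sD_h)=(\oT_h,\fV_{\od\cap\odelta}^{\rm m,+\odelta}\Lambda^k)$ (note that by Lemma \ref{lem:disharm} the orthogonal complement of $\hf_h\Lambda^k=\N(\oT_h,\sD_h)$ inside $\sD_h$ is precisely $\sD_h^{\boldsymbol\lrcorner}$, so the stated inequality is equivalent to $\icr(\oT_h,\sD_h)<\infty$ with a uniform constant). The strategy is to feed the pair $\left[(\oT_h,\fV_{\od\cap\odelta}^{\rm m,+\odelta}\Lambda^k),(\aoT_h,\fW^{*,\rm abc}_{h0}\Lambda^{k+1}\times\fW_h\Lambda^{k-1})\right]$, which the preceding lemma establishes is partially adjoint based on $\left[(\oT_h,\pddempdeL^k(\mathcal{G}_h)),(\aoT_h,\mathbf{P}_{\odelta\times\od}^k(\mathcal{G}_h))\right]$, into the quantified closed range theorem (Theorem \ref{thm:chpi}), in the already-specialized form of the lemma immediately above (where $\uxM=\{0\}$ and $\alpha=1$), namely
\begin{equation*}
\icr(\oT_h,\fV_{\od\cap\odelta}^{\rm m,+\odelta}\Lambda^k)\leqslant 2\cdot\icr(\oT_h,\pddempdeL^k(\mathcal{G}_h))+\icr(\aoT_h,\fW^{*,\rm abc}_{h0}\Lambda^{k+1}\times\fW_h\Lambda^{k-1}).
\end{equation*}
So it remains to bound the two terms on the right uniformly in $h$.

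\textbf{Bounding the first term.} The operator $(\oT_h,\pddempdeL^k(\mathcal{G}_h))$ acts cell by cell, so $\icr(\oT_h,\pddempdeL^k(\mathcal{G}_h))=\max_{T\in\mathcal{G}_h}\icr(\oT,\pddempdeL^k(T))$. On a single simplex, $\oT\fmu=(\od^k\fmu,\odelta_k\fmu)$ and $\N(\oT,\pddempdeL^k(T))=\mathcal{P}_0\Lambda^k(T)$ (as observed in the proof of Lemma \ref{lem:isocellT}), so $\pddempdeL^k(T)^{\boldsymbol\lrcorner}$ consists of the members of $\star\okappa_T\star(\mathcal{P}_0\Lambda^{k-1}(T))+\okappa_T(\mathcal{P}_0\Lambda^{k+1}(T))+\mathcal{H}^2_{\odelta}\Lambda^k(T)$ (after removing the $\mathcal{P}_0\Lambda^k(T)$ part), and Lemma \ref{lem:hfonT} gives exactly $\|\fmu\|_{L^2\Lambda^k(T)}\leqslant C_{k,n}h_T(\|\odelta_k\fmu\|+\|\od^k\fmu\|)$ on that space. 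Hence $\icr(\oT,\pddempdeL^k(T))\leqslant C_{k,n}h_T\leqslant C_{k,n}\,\mathrm{diam}(\Omega)$, which is uniform in $h$.

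\textbf{Bounding the second term — the main obstacle.} We must show $\icr(\aoT_h,\fW^{*,\rm abc}_{h0}\Lambda^{k+1}\times\fW_h\Lambda^{k-1})\leqslant C_{k,n}$, where $\aoT_h(\feta_h,\ftau_h)=\odelta_{k+1,h}\feta_h+\od^{k-1}\ftau_h$. The kernel of this operator on the product space splits as $\N(\odelta_{k+1,h},\fW^{*,\rm abc}_{h0}\Lambda^{k+1})\times\N(\od^{k-1},\fW_h\Lambda^{k-1})$, so its $\boldsymbol\lrcorner$-space is contained in the product of the two individual $\boldsymbol\lrcorner$-spaces, and $\|\aoT_h(\feta_h,\ftau_h)\|^2=\|\odelta_{k+1,h}\feta_h\|^2+\|\od^{k-1}\ftau_h\|^2$ provided the two range contributions $\R(\odelta_{k+1,h},\fW^{*,\rm abc}_{h0}\Lambda^{k+1})$ and $\R(\od^{k-1},\fW_h\Lambda^{k-1})$ are $L^2$-orthogonal — which is exactly the content of the Hodge decomposition in Lemma \ref{lem:hoddecconst}. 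Granting that orthogonality, one gets $\icr(\aoT_h,\fW^{*,\rm abc}_{h0}\Lambda^{k+1}\times\fW_h\Lambda^{k-1})\leqslant\max\{\icr(\odelta_{k+1,h},\fW^{*,\rm abc}_{h0}\Lambda^{k+1}),\,\icr(\od^{k-1},\fW_h\Lambda^{k-1})\}$, and both of these are $\leqslant C_{k,n}$ by Lemma \ref{lem:piwfs} together with the Remark following it. I expect the orthogonal-splitting bookkeeping for the product operator $\aoT_h$ to be where one must be careful — one needs the cross term $\langle\odelta_{k+1,h}\feta_h,\od^{k-1}\ftau_h\rangle$ to vanish and one needs the $\boldsymbol\lrcorner$-space of the product to be no larger than the product of the $\boldsymbol\lrcorner$-spaces, both of which follow from Lemma \ref{lem:hoddecconst} but deserve an explicit line. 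Assembling the two bounds into the displayed inequality then yields $\|\fmu_h\|_{L^2\Lambda^k}\leqslant C_{k,n}(\|\od^k_h\fmu_h\|_{L^2\Lambda^{k+1}}+\|\odelta_{k,h}\fmu_h\|_{L^2\Lambda^{k-1}})$ for $\fmu_h\in\fV_{\od\cap\odelta}^{\rm m,+\odelta}\Lambda^k\omp\hf_h\Lambda^k$, completing the proof.
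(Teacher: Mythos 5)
Your proposal is correct and follows essentially the same route as the paper: identify the inequality as the bound $\icr(\oT_h,\fV_{\od\cap\odelta}^{\rm m,+\odelta}\Lambda^k)\leqslant C_{k,n}$ (with $\hf_h\Lambda^k=\N(\oT_h,\fV_{\od\cap\odelta}^{\rm m,+\odelta}\Lambda^k)$ by Lemma \ref{lem:disharm}), invoke the specialized quantified closed range estimate for the partially adjoint pair, bound $\icr(\oT_h,\pddempdeL^k(\mathcal{G}_h))$ cell by cell via Lemma \ref{lem:hfonT}, and bound $\icr(\aoT_h,\fW^{*,\rm abc}_{h0}\Lambda^{k+1}\times\fW_h\Lambda^{k-1})$ via the kernel splitting and Lemma \ref{lem:piwfs}. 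Your explicit verification of the cross-term orthogonality through the Hodge decomposition of Lemma \ref{lem:hoddecconst} is a detail the paper leaves implicit, but it is the same argument.
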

\begin{proof}
By Lemma \ref{lem:hfonT} and by the definition of $\icr$, $\icr(\oT,\pddempdeL^k(T))\leqslant C'_{k,n}h_T$, with $C'_{k,n}$ depending on the regularity of $T$. Noting that $\pddempdeL^k(\mathcal{G}_h)=\prod_{T\in\mathcal{G}_h}\pddempdeL^k(T)$, $\displaystyle\icr(\oT_h,\pddempdeL^k(\mathcal{G}_h))=\sup_{T\in\mathcal{G}_h} \icr(\oT,\pddempdeL^k(T))\leqslant C_{k,n}''>0$, with $C_{k,n}''$ depending on the shape regularity of $\mathcal{G}_h$, and if $\mathcal{G}_h$ is quasi-uniform, $\icr(\oT_h,\pddempdeL^k(\mathcal{G}_h))\leqslant C_{k,n}'''h$. 

By Lemma \ref{lem:piwfs}, noting that $\N(\aoT_h,\fW^{*,\rm abc}_{h0}\Lambda^{k+1}\times\fW_h\Lambda^{k-1})=\N(\odelta_{k+1,h},\fW^{*,\rm abc}_{h0}\Lambda^{k+1})\times \N(\od^{k-1},\fW_h\Lambda^{k-1})$, we have $\icr(\aoT_h,\fW^{*,\rm abc}_{h0}\Lambda^{k+1}\times\fW_h\Lambda^{k-1})\leqslant C_{k,n}''''>0$. Therefore, 
$$
\icr(\oT_h,\fV_{\od\cap\odelta}^{\rm m, +\odelta})\leqslant 2 \icr(\oT_h,\pddempdeL^k(\mathcal{G}_h))+\icr(\aoT_h,\fW^{*,\rm abc}_{h0}\Lambda^{k+1}\times\fW_h\Lambda^{k-1})+ C_{k,n}''\leqslant C_{k,n}.
$$ 
Namely 
$$
\|\fmu_h\|_{L^2\Lambda^k}\leqslant C_{k,n}(\|\od^k_h\fmu_h\|_{L^2\Lambda^{k+1}}+\|\odelta_{k,h}\fmu_h\|_{L^2\Lambda^{k-1}}),\ \ \forall\,\fmu_h\in \fV_{\od\cap\odelta}^{\rm m, +\odelta}\Lambda^k\omp \N(\oT_h,\fV_{\od\cap\odelta}^{\rm m, +\odelta}).
$$
The proof is completed.   
\end{proof}

%
%
%
\section{A primal finite element scheme of the Hodge Laplace problem}

\label{sec:fescheme}

We consider the finite element problem: find $\fomega_h\in \fV_{\od\cap\odelta}^{\rm m, +\odelta}\Lambda^k$, such that 
\begin{equation}\label{eq:modelhldisonefield}
\left\{
\begin{array}{rll}
\langle\fomega_h,\fvarsigma_h\rangle_{L^2\Lambda^k}&=0, &\forall\,\fvarsigma_h\in \hf_h\Lambda^k,
\\  
\langle\od^k_h\fomega_h,\od^k_h\fmu_h\rangle_{L^2\Lambda^{k+1}}+\langle\odelta_{k,h}\fomega_h,\odelta_{k,h}\fmu_h\rangle_{L^2\Lambda^{k-1}}&=\langle\ff-\mathbf{P}_{\hf_h\Lambda^k}\ff,\fmu_h\rangle_{L^2\Lambda^k},& \forall\,\fmu_h\in \fV_{\od\cap\odelta}^{\rm m, +\odelta}\Lambda^k.
\end{array}\right.
\end{equation}
$\mathbf{P}_{\hf_h\Lambda^k}$ denotes the projection to $\hf_h\Lambda^k$. 
\begin{lemma}
The system \eqref{eq:modelhldisonefield} is well posed. 
\end{lemma}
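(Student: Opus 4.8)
The plan is to reduce \eqref{eq:modelhldisonefield} to a coercive variational problem on the orthogonal complement of the discrete harmonic forms, to solve it there by Lax--Milgram, and then to verify that the solution so obtained satisfies the full system with test functions ranging over all of $\fV_{\od\cap\odelta}^{\rm m, +\odelta}\Lambda^k$. First I would set $\fV^\perp:=\fV_{\od\cap\odelta}^{\rm m, +\odelta}\Lambda^k\omp\hf_h\Lambda^k$ and introduce the symmetric bilinear form
$$
a_h(\fmu_h,\fvartheta_h):=\langle\od^k_h\fmu_h,\od^k_h\fvartheta_h\rangle_{L^2\Lambda^{k+1}}+\langle\odelta_{k,h}\fmu_h,\odelta_{k,h}\fvartheta_h\rangle_{L^2\Lambda^{k-1}}=\langle\oT_h\fmu_h,\oT_h\fvartheta_h\rangle.
$$
By Lemma \ref{lem:disharm}, $\hf_h\Lambda^k=\N(\oT_h,\fV_{\od\cap\odelta}^{\rm m, +\odelta}\Lambda^k)$, and by the discrete Poincar\'e inequality of Lemma \ref{lem:pifes} there is a constant $c>0$ with $a_h(\fmu_h,\fmu_h)=\|\oT_h\fmu_h\|^2\geqslant c\,\|\fmu_h\|_{L^2\Lambda^k}^2$ for all $\fmu_h\in\fV^\perp$; hence $a_h$ is continuous and coercive on the finite-dimensional space $\fV^\perp$, and Lax--Milgram gives a unique $\fomega_h\in\fV^\perp$ with $a_h(\fomega_h,\fmu_h)=\langle\ff-\mathbf{P}_{\hf_h\Lambda^k}\ff,\fmu_h\rangle_{L^2\Lambda^k}$ for all $\fmu_h\in\fV^\perp$.

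Next I would show that this $\fomega_h$ solves \eqref{eq:modelhldisonefield}. The first equation holds since $\fomega_h\in\fV^\perp$ is $L^2$-orthogonal to $\hf_h\Lambda^k$. For the second, take any $\fmu_h\in\fV_{\od\cap\odelta}^{\rm m, +\odelta}\Lambda^k$ and split $\fmu_h=\fmu_h^{0}+\fmu_h^{\perp}$ with $\fmu_h^{0}\in\hf_h\Lambda^k$ and $\fmu_h^{\perp}\in\fV^\perp$; since $\oT_h\fmu_h^{0}=0$ by Lemma \ref{lem:disharm} and $\ff-\mathbf{P}_{\hf_h\Lambda^k}\ff\perp\hf_h\Lambda^k$, both sides of the identity are unchanged on replacing $\fmu_h$ by $\fmu_h^{\perp}$, so the identity already established on $\fV^\perp$ delivers it for all test functions. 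This is the single point that needs care: \eqref{eq:modelhldisonefield} looks over-determined, the solution being confined to a proper subspace while the variational identity is demanded against the whole space; but both sides annihilate $\hf_h\Lambda^k$ — the left-hand side by Lemma \ref{lem:disharm}, the right-hand side by compatibility of $\ff-\mathbf{P}_{\hf_h\Lambda^k}\ff$ — so the extra test directions impose no further constraint.

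Finally, for uniqueness, any solution of \eqref{eq:modelhldisonefield} lies in $\fV^\perp$ by its first equation, and its second equation, tested only against $\fV^\perp$, is precisely the coercive problem above; hence it coincides with the Lax--Milgram solution. Equivalently, the homogeneous case $\ff=0$ forces $\oT_h\fomega_h=0$ upon testing with $\fomega_h$, whence $\fomega_h=0$ by Lemma \ref{lem:pifes}. I expect no genuine obstacle here: the substance of the argument has already been discharged in Lemma \ref{lem:disharm} and in the discrete Poincar\'e inequality of Lemma \ref{lem:pifes}, and what remains is bookkeeping with the harmonic component.
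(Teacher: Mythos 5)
Your proof is correct and follows essentially the same route as the paper, which simply invokes the discrete Poincar\'e inequality of Lemma \ref{lem:pifes} (together with the stability bound) and leaves the details implicit. Your restriction to $\fV_{\od\cap\odelta}^{\rm m, +\odelta}\Lambda^k\omp\hf_h\Lambda^k$, the Lax--Milgram/coercivity step, and the observation that both sides of the second equation annihilate $\hf_h\Lambda^k$ (via Lemma \ref{lem:disharm} and the compatibility of $\ff-\mathbf{P}_{\hf_h\Lambda^k}\ff$) are exactly the bookkeeping the paper's terse proof takes for granted.
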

\begin{proof}
The existence of a unique solution to \eqref{eq:modelhldisonefield} is straightforward by Lemma \ref{lem:pifes} the discrete Poincar\'e inequality. Further, it holds that 
$$
\|\fomega_h\|_{L^2\Lambda^k}+\|\od^k_h\fomega_h\|_{L^2\Lambda^{k+1}}+\|\odelta_{k,h}\fomega_h\|_{L^2\Lambda^{k-1}}\leqslant C_{k,n}\frac{\langle\ff,\fmu_h\rangle}{\|\od^k_h\fmu_h\|_{L^2\Lambda^{k+1}}+\|\odelta_{k,h}\fmu_h\|_{L^2\Lambda^{k-1}}},
$$
for any $\fmu_h\in \fV_{\od\cap\odelta}^{\rm m, +\odelta}\omp \hf_h\Lambda^k$. The proof is completed. 
\end{proof}

\subsection{Error estimation}
\label{sec:errorest}

Given $\ff\in L^2\Lambda^k$, an equivalent formulation of \eqref{eq:modelhlori} is to find $\fomega\in H\Lambda^k(\Omega)\cap H^*_0(\Omega)$ and $\fvartheta\in \hf\Lambda^k(\Omega)$, such that 
\begin{equation}\label{eq:modelhl}
\left\{
\begin{array}{c}
\langle\fomega,\fvartheta\rangle_{L^2\Lambda^k}=0,\ \ \forall\,\fvartheta\in \hf\Lambda^k,\ \mbox{and}
\\  
\langle\fvartheta,\fmu\rangle_{L^2\Lambda^k}+\langle\od^k\fomega,\od^k\fmu\rangle_{L^2\Lambda^{k+1}}+\langle\odelta_k\fomega,\odelta_k\fmu\rangle_{L^2\Lambda^{k-1}}=\langle\ff,\fmu\rangle_{L^2\Lambda^k},\ \ \forall\,\fmu\in H\Lambda^k\cap H^*_0\Lambda^k.
\end{array}\right.
\end{equation}

We consider the finite element problem for \eqref{eq:modelhl}: find $\fomega_h\in \fV_{\od\cap\odelta}^{\rm m, +\odelta}\Lambda^k$ and $\fvartheta_h\in \hf_h\Lambda^k $, such that
\begin{equation}\label{eq:modelhldis}
\left\{
\begin{array}{rll}
\langle\fomega_h,\fvarsigma_h\rangle_{L^2\Lambda^k}&=0, &\forall\,\fvarsigma_h\in \hf_h\Lambda^k,
\\  
\langle\fvartheta_h,\fmu_h\rangle_{L^2\Lambda^k}+\langle\od^k_h\fomega_h,\od^k_h\fmu_h\rangle_{L^2\Lambda^{k+1}}+\langle\odelta_{k,h}\fomega_h,\odelta_{k,h}\fmu_h\rangle_{L^2\Lambda^{k-1}}&=\langle\ff,\fmu_h\rangle_{L^2\Lambda^k},& \forall\,\fmu_h\in \fV_{\od\cap\odelta}^{\rm m, +\odelta}\Lambda^k.
\end{array}\right.
\end{equation}
Then evidently \eqref{eq:modelhldisonefield} and \eqref{eq:modelhldis} are equivalent, as the solutions (the part of $\fomega_h$) are equal. The main technical results of this paper are Lemmas \ref{lem:estbyreg} and \ref{lem:ests-reg} below.

\begin{lemma}\label{lem:estbyreg}
Let $(\fvartheta,\fomega)$ and $(\fvartheta_h,\fomega_h)$ be the solutions of \eqref{eq:modelhl} and \eqref{eq:modelhldis}, respectively. Then
\begin{multline}
\|\fomega-\fomega_h\|_{L^2\Lambda^k}+\|\od^k_h(\fomega-\fomega_h)\|_{L^2\Lambda^{k+1}}+\|\odelta_{k,h}(\fomega-\fomega_h)\|_{L^2\Lambda^{k-1}}+\|\fvartheta-\fvartheta_h\|_{L^2\Lambda^k}
\\
\leqslant C(\inf_{\ftau_h\in\fW_h\Lambda^{k-1}}\|\odelta_k\fomega-\ftau_h\|_{\od^{k-1}}+\inf_{\fmu_h\in\fW_h\Lambda^k}\|\fomega-\fmu_h\|_{\od^k}+\inf_{\fvarsigma_h\in\hf_h\Lambda^k}\|\fvartheta-\fvarsigma_h\|_{L^2\Lambda^k}+h\|\ff\|_{L^2\Lambda^k}).
\end{multline}
\end{lemma}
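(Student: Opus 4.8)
The plan is to avoid a direct nonconforming analysis of \eqref{eq:modelhldis} --- whose trial space $\fV_{\od\cap\odelta}^{\rm m, +\odelta}\Lambda^k$ carries no useful approximation property on its own --- and instead to pivot through the classical mixed finite element discretization of the Hodge--Laplace problem on the conforming Whitney complex, whose convergence theory is standard. So I would first set up the mixed scheme \eqref{eq:classdisf0}: find $\fsigma_h\in\fW_h\Lambda^{k-1}$, $u_h\in\fW_h\Lambda^k$, $p_h\in\hf_h\Lambda^k$ with $\langle u_h,\fvarsigma_h\rangle_{L^2\Lambda^k}=0$ for all $\fvarsigma_h\in\hf_h\Lambda^k$, $\langle\fsigma_h,\ftau_h\rangle_{L^2\Lambda^{k-1}}=\langle u_h,\od^{k-1}\ftau_h\rangle_{L^2\Lambda^k}$ for all $\ftau_h\in\fW_h\Lambda^{k-1}$, and $\langle\od^{k-1}\fsigma_h,\fmu_h\rangle_{L^2\Lambda^k}+\langle\od^k u_h,\od^k\fmu_h\rangle_{L^2\Lambda^{k+1}}+\langle p_h,\fmu_h\rangle_{L^2\Lambda^k}=\langle\mathbf{P}^k_{0,h}\ff,\fmu_h\rangle_{L^2\Lambda^k}$ for all $\fmu_h\in\fW_h\Lambda^k$, where $\mathbf{P}^k_{0,h}$ denotes the elementwise $L^2$ projection onto $\mathcal{P}_0\Lambda^k(\mathcal{G}_h)$. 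This is the Arnold--Falk--Winther mixed formulation of \eqref{eq:modelhl} with $(\fsigma,u,p):=(\odelta_k\fomega,\fomega,\fvartheta)$; it is well posed (Lemma \ref{lem:piwfs} and the Remark following it furnish the $\icr$-bounds, and $\hf_h\Lambda^k\cong\hf\Lambda^k$), and its error analysis --- using the bounded commuting projection $\pi_h$, the commutativity $\od\pi_h=\pi_h\od$, and the identity $\mathbf{P}_{\hf\Lambda^k}\fomega=0$ --- collapses to
\begin{multline*}
\|\odelta_k\fomega-\fsigma_h\|_{\od^{k-1}}+\|\fomega-u_h\|_{\od^k}+\|\fvartheta-p_h\|_{L^2\Lambda^k}\\
\leqslant C\Big(\inf_{\ftau_h\in\fW_h\Lambda^{k-1}}\|\odelta_k\fomega-\ftau_h\|_{\od^{k-1}}+\inf_{\fmu_h\in\fW_h\Lambda^k}\|\fomega-\fmu_h\|_{\od^k}+\inf_{\fvarsigma_h\in\hf_h\Lambda^k}\|\fvartheta-\fvarsigma_h\|_{L^2\Lambda^k}+h\|\ff\|_{L^2\Lambda^k}\Big),
\end{multline*}
the last summand accounting only for the replacement of $\ff$ by $\mathbf{P}^k_{0,h}\ff$.

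The core of the argument is an equivalence between the primal and mixed schemes, which I would isolate as a separate lemma (cf.\ Lemma \ref{lem:disequivalent}): the solution of the primal scheme \eqref{eq:modelhldisff0} (the variant of \eqref{eq:modelhldis} with right-hand side $\langle\mathbf{P}^k_{0,h}\ff-\mathbf{P}_{\hf_h\Lambda^k}\ff,\cdot\rangle$) and the solution of \eqref{eq:classdisf0} determine one another, and in the correspondence $\fvartheta_h=p_h$, $\od^k_h\fomega_h=\od^k u_h$, $\odelta_{k,h}\fomega_h=\fsigma_h$ and $\mathbf{P}^k_{0,h}\fomega_h=\mathbf{P}^k_{0,h}u_h$. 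Given $(\fsigma_h,u_h,p_h)$, one reconstructs $\fomega_h$ elementwise: put its $\mathcal{P}_0\Lambda^k(T)$-part equal to that of $u_h$, and determine the remaining parts in $\okappa_T(\mathcal{P}_0\Lambda^{k+1}(T))$ and $\star\okappa_T\star(\mathcal{P}_0\Lambda^{k-1}(T))+\mathcal{H}^2_{\odelta}\Lambda^k(T)$ uniquely from $\od^k\fomega_h|_T=\od^k u_h|_T$ and $\odelta_k\fomega_h|_T=\fsigma_h|_T$, the well-definedness being exactly the bijections of Lemmas \ref{lem:surjecdde} and \ref{lem:isocellT} and the nondegeneracy Lemma \ref{lem:cell1-1}. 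That the reconstructed function lies in $\fV_{\od\cap\odelta}^{\rm m, +\odelta}\Lambda^k$ is seen by integrating by parts: tested against $\fW^{*,\rm abc}_{h0}\Lambda^{k+1}$ the first condition in \eqref{eq:deffems} becomes ``$u_h$ is $H\Lambda^k$-conforming'' and against $\fW_h\Lambda^{k-1}$ the second becomes ``$\fsigma_h$ is the Galerkin codifferential of $u_h$'', i.e.\ the first two equations of \eqref{eq:classdisf0}, with the Hodge decomposition of $\mathcal{P}_0\Lambda^k(\mathcal{G}_h)$ (Lemma \ref{lem:hoddecconst}) matching the constant parts and Lemma \ref{lem:disharm} ($\N(\oT_h,\fV_{\od\cap\odelta}^{\rm m, +\odelta}\Lambda^k)=\hf_h\Lambda^k$) matching the conditions against $\hf_h\Lambda^k$; reversing these steps gives the converse.

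Granting the equivalence, the proof finishes quickly. Writing $(\fvartheta_h,\fomega_h)$ for the solution of \eqref{eq:modelhldisff0}, the identities above give $\|\od^k_h(\fomega-\fomega_h)\|=\|\od^k\fomega-\od^k u_h\|$, $\|\odelta_{k,h}(\fomega-\fomega_h)\|=\|\odelta_k\fomega-\fsigma_h\|$ and $\|\fvartheta-\fvartheta_h\|=\|\fvartheta-p_h\|$, each bounded by the displayed right-hand side; and $\|\fomega-\fomega_h\|_{L^2\Lambda^k}\leqslant\|\fomega-u_h\|_{L^2\Lambda^k}+\|u_h-\fomega_h\|_{L^2\Lambda^k}$, where the second term is $\leqslant\|u_h-\mathbf{P}^k_{0,h}u_h\|+\|\fomega_h-\mathbf{P}^k_{0,h}\fomega_h\|\leqslant Ch(\|\od^k u_h\|+\|\od^k_h\fomega_h\|+\|\odelta_{k,h}\fomega_h\|)\leqslant Ch\|\ff\|_{L^2\Lambda^k}$ by the local bubble bounds of Lemmas \ref{lem:hfreq} and \ref{lem:hfonT} together with well-posedness. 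Finally I pass from \eqref{eq:modelhldisff0} to the actual scheme \eqref{eq:modelhldis}: the two differ only in the linear functional, by $\langle\ff-\mathbf{P}^k_{0,h}\ff,\fmu_h\rangle$ for $\fmu_h\in\fV_{\od\cap\odelta}^{\rm m, +\odelta}\Lambda^k$, which --- applying Lemma \ref{lem:hfonT} to $\fmu_h-\mathbf{P}^k_{0,h}\fmu_h$ --- is bounded by $Ch\|\ff\|_{L^2\Lambda^k}(\|\od^k_h\fmu_h\|_{L^2\Lambda^{k+1}}+\|\odelta_{k,h}\fmu_h\|_{L^2\Lambda^{k-1}})$, so by the discrete Poincar\'e inequality (Lemma \ref{lem:pifes}) the two discrete solutions, and their $\od$-, $\odelta$- and $\fvartheta$-parts, differ by at most $Ch\|\ff\|_{L^2\Lambda^k}$. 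Assembling the bounds gives the claim.

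The step I expect to be the main obstacle is the equivalence. The two continuity conditions in \eqref{eq:deffems} are imposed against two different spaces, and one must verify that, together with the elementwise reconstruction, they pin down precisely the conforming data $(u_h,\fsigma_h)$ of \eqref{eq:classdisf0} --- so that $\fomega_h$ is uniquely determined and genuinely lies in $\fV_{\od\cap\odelta}^{\rm m, +\odelta}\Lambda^k$, and the correspondence is injective. The elementwise bijectivity is already recorded in Lemmas \ref{lem:surjecdde} and \ref{lem:isocellT}; the delicate point is the global compatibility of the two dual inter-element conditions with the inter-element continuity built into the conforming Whitney complex, for which the accompanied-by-conforming characterization of $\fW_h\Lambda^\bullet$ (stated just before Lemma \ref{lem:piwfs}) and the Hodge decomposition of Lemma \ref{lem:hoddecconst} are the essential tools.
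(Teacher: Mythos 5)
Your proposal is correct and follows essentially the same route as the paper: perturb the data to its piecewise-constant projection $\mathbf{P}^k_0\ff$ with an $\mathcal{O}(h)$ control via Lemma \ref{lem:hfonT} and the discrete Poincar\'e inequality, invoke the equivalence with the classical mixed Whitney-form scheme (Lemma \ref{lem:disequivalent}) to transfer the Arnold--Falk--Winther error bounds of Lemma \ref{lem:classicalmix}, and recover the $L^2\Lambda^k$ part through the matching piecewise-constant projections. The only differences are organizational (you fold the data perturbation directly into the mixed error bound instead of introducing the second mixed solution with unprojected data, and you spell out the ``standard techniques'' step for the $L^2$ norm), not substantive.
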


Here and in the sequel, denote $\|\cdot\|_{\od^k}:=(\|\cdot\|_{L^2\Lambda^k}^2+\|\od^k\cdot\|_{L^2\Lambda^{k+1}}^2)^{1/2}$, and similar is $\|\cdot\|_{\odelta_k}$.

A domain $\Omega$ is called $s$-regular if, for some $0<s\leqslant 1$, for $\fomega\in H\Lambda^k(\Omega)\cap H^*_0\Lambda^k(\Omega)$ or $H_0\Lambda^k(\Omega)\cap H^*\Lambda^k(\Omega)$, $\fomega\in H^s\Lambda^k(\Omega)$, and 
\begin{equation}
\|\fomega\|_{H^s\Lambda^k}\leqslant C(\|\fomega\|_{L^2\Lambda^k}+\|\od^k\fomega\|_{L^2\Lambda^{k+1}}+\|\odelta_k\fomega\|_{L^2\Lambda^{k-1}}),
\end{equation}
A smoothly bounded domain is 1-regular and a Lipschitz domain is 1/2-regular. \cite{Arnold.D;Falk.R;Winther.R2006acta} 

\begin{lemma}\cite{Arnold.D;Falk.R;Winther.R2006acta} 
Let $\Omega$ be $s$-regular. Let $\fomega$ be the solution of \eqref{eq:modelhldisonefield}. Then
$$
\|\fomega\|_{H^s\Lambda^k}+\|\od^k\fomega\|_{H^s\Lambda^{k+1}}+\|\odelta_k\fomega\|_{H^s\Lambda^{k-1}}\leqslant C\|\ff\|_{L^2\Lambda^k}.
$$
\end{lemma}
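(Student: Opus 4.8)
The plan is to reduce the claimed $H^s$ bound to three applications of the $s$-regularity hypothesis, one to each of the forms $\fomega$, $\od^k\fomega$, $\odelta_k\fomega$, after first establishing the elementary $L^2$-stability of the solution; here I read $\fomega$ as the solution of the continuous problem \eqref{eq:modelhlori}, so that the strong form displayed after \eqref{eq:modelhlori} is available.

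First I would record the stability estimate. Testing the second equation of \eqref{eq:modelhlori} with $\fmu=\fomega$ and using $\fomega\perp\hf\Lambda^k$ gives $\|\od^k\fomega\|_{L^2\Lambda^{k+1}}^2+\|\odelta_k\fomega\|_{L^2\Lambda^{k-1}}^2=\langle\ff-\mathbf{P}_{\hf}\ff,\fomega\rangle_{L^2\Lambda^k}$. Since $\R(\oT,H\Lambda^k\cap H^*_0\Lambda^k)$ is closed with $\N(\oT,H\Lambda^k\cap H^*_0\Lambda^k)=\hf\Lambda^k$, the associated Poincar\'e inequality $\|\fomega\|_{L^2\Lambda^k}\leqslant C(\|\od^k\fomega\|_{L^2\Lambda^{k+1}}+\|\odelta_k\fomega\|_{L^2\Lambda^{k-1}})$ applies to $\fomega$, and the two together yield $\|\fomega\|_{L^2\Lambda^k}+\|\od^k\fomega\|_{L^2\Lambda^{k+1}}+\|\odelta_k\fomega\|_{L^2\Lambda^{k-1}}\leqslant C\|\ff\|_{L^2\Lambda^k}$. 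Next, since $\od^k\fomega\in H^*_0\Lambda^{k+1}$ and $\od^{k-1}\odelta_k\fomega\in H\Lambda^k$, integration by parts gives $\langle\odelta_{k+1}\od^k\fomega,\od^{k-1}\odelta_k\fomega\rangle_{L^2\Lambda^k}=\langle\od^k\fomega,\od^k\od^{k-1}\odelta_k\fomega\rangle_{L^2\Lambda^{k+1}}=0$, so the Pythagorean identity applied to $\odelta_{k+1}\od^k\fomega+\od^{k-1}\odelta_k\fomega=\ff-\mathbf{P}_{\hf}\ff$ gives $\|\odelta_{k+1}\od^k\fomega\|_{L^2\Lambda^k}+\|\od^{k-1}\odelta_k\fomega\|_{L^2\Lambda^k}\leqslant C\|\ff\|_{L^2\Lambda^k}$.

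Then I would invoke $s$-regularity three times. For $\fomega\in H\Lambda^k\cap H^*_0\Lambda^k$ it gives $\|\fomega\|_{H^s\Lambda^k}\leqslant C(\|\fomega\|_{L^2\Lambda^k}+\|\od^k\fomega\|_{L^2\Lambda^{k+1}}+\|\odelta_k\fomega\|_{L^2\Lambda^{k-1}})\leqslant C\|\ff\|_{L^2\Lambda^k}$. For $\xi:=\od^k\fomega$ one has $\xi\in H^*_0\Lambda^{k+1}$ and $\od^{k+1}\xi=0$, hence $\xi\in H\Lambda^{k+1}\cap H^*_0\Lambda^{k+1}$, so $s$-regularity in degree $k+1$ gives $\|\od^k\fomega\|_{H^s\Lambda^{k+1}}\leqslant C(\|\xi\|_{L^2\Lambda^{k+1}}+\|\odelta_{k+1}\xi\|_{L^2\Lambda^k})\leqslant C\|\ff\|_{L^2\Lambda^k}$. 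For $\zeta:=\odelta_k\fomega$ one has $\zeta\in H\Lambda^{k-1}$ and $\odelta_{k-1}\zeta=0$, and moreover $\zeta\in H^*_0\Lambda^{k-1}$: indeed $\star\fomega\in H_0\Lambda^{n-k}$, and approximating it in $H\Lambda^{n-k}$ by forms in $\mathcal{C}_0^\infty\Lambda^{n-k}$ shows $\od^{n-k}(\star\fomega)\in H_0\Lambda^{n-k+1}$ (the $\od$-norms of the approximants being controlled since $\od^{n-k+1}\od^{n-k}(\cdot)=0$), whence $\odelta_k\fomega=\pm\star\od^{n-k}\star\fomega\in\star H_0\Lambda^{n-k+1}=H^*_0\Lambda^{k-1}$. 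Thus $\zeta\in H\Lambda^{k-1}\cap H^*_0\Lambda^{k-1}$, and $s$-regularity in degree $k-1$ gives $\|\odelta_k\fomega\|_{H^s\Lambda^{k-1}}\leqslant C(\|\zeta\|_{L^2\Lambda^{k-1}}+\|\od^{k-1}\zeta\|_{L^2\Lambda^k})\leqslant C\|\ff\|_{L^2\Lambda^k}$. Adding the three estimates finishes the argument.

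The only point that is not completely mechanical is checking that $\od^k\fomega$ and $\odelta_k\fomega$ lie in the function spaces on which $s$-regularity is postulated: for $\od^k\fomega$ this is immediate from the strong form, while for $\odelta_k\fomega$ it amounts to the fact that $\odelta$ of an $H^*_0$-form that additionally lies in $H$ stays in $H^*_0$, which is the density argument sketched above (equivalently, a Hodge-star transcription of the fact that $\od$ maps $H_0$-forms to $H_0$-forms). Everything else is an assembly of the $L^2$-stability estimate, the orthogonal splitting of the strong-form equation, and the definition of $s$-regularity, so I do not expect any real obstacle.
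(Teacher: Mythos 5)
The paper does not actually prove this lemma: it is quoted verbatim from \cite{Arnold.D;Falk.R;Winther.R2006acta} (cf.\ Section 7 there), so there is no internal argument to compare yours against. Your reconstruction is correct and is essentially the standard argument behind the cited result. Two reading choices you made are the right ones and worth flagging: the lemma's reference to \eqref{eq:modelhldisonefield} is evidently a slip for the continuous problem \eqref{eq:modelhlori}, and the $s$-regularity hypothesis must be understood as a property of the domain valid in all form degrees (you invoke it in degrees $k-1$, $k$ and $k+1$), which is how it is stated in the source even though the paper's definition is written only with the fixed $k$.

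On the substance: the $L^2$ stability via testing with $\fmu=\fomega$ plus the Poincar\'e inequality on $(\hf\Lambda^k)^\perp$ is fine; the $L^2$-orthogonality of $\odelta_{k+1}\od^k\fomega$ and $\od^{k-1}\odelta_k\fomega$ is legitimate because $\od^k\fomega\in H^*_0\Lambda^{k+1}$ by the strong form and $\od^{k-1}\odelta_k\fomega$ lies in the kernel of $\od^k$, so the integration by parts you use is exactly the defining duality of $H^*_0$; and the only genuinely non-mechanical point, namely that $\odelta_k\fomega\in H^*_0\Lambda^{k-1}$, is correctly handled by your density argument, which is just the Hodge-star transcription of the standard fact that $\od$ maps $H_0\Lambda$ into $H_0\Lambda$ (so that $H_0\Lambda$, and dually $H^*_0\Lambda$, is a subcomplex). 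With those memberships in hand, the three applications of $s$-regularity and the bounds from the orthogonal splitting assemble into the stated estimate, so I see no gap.
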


\begin{lemma}\label{lem:ests-reg}
Let $\Omega$ be $s$-regular. Let $(\fvartheta,\fomega)$ and $(\fvartheta_h,\fomega_h)$ be the solutions of \eqref{eq:modelhl} and \eqref{eq:modelhldis}, respectively. Then
\begin{equation}\label{eq:errests-reg}
\|\fomega-\fomega_h\|_{L^2\Lambda^k}+\|\od^k\fomega-\od^k_h\fomega_h\|_{L^2\Lambda^{k+1}}+\|\odelta_k\fomega-\odelta_{k,h}\fomega_h\|_{L^2\Lambda^{k-1}}+\|\fvartheta-\fvartheta_h\|_{L^2\Lambda^k}\leqslant Ch^s\|\ff\|_{L^2\Lambda^k}.
\end{equation}
\end{lemma}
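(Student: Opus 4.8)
The plan is to obtain \eqref{eq:errests-reg} directly from Lemma \ref{lem:estbyreg} together with the $s$-regularity estimate stated just above: the right-hand side of Lemma \ref{lem:estbyreg} is a sum of three best-approximation quantities plus $h\|\ff\|_{L^2\Lambda^k}$, and since $h\leqslant h^s$ for $0<s\leqslant 1$ when $h$ is bounded, it suffices to bound each of the three infima by $Ch^s\|\ff\|_{L^2\Lambda^k}$. The tool I would use is a bounded, commuting quasi-interpolation (smoothed cochain projection) $\pi_h^{(j)}\colon L^2\Lambda^j\to\fW_h\Lambda^j$, satisfying $\od^j\pi_h^{(j)}=\pi_h^{(j+1)}\od^j$ on $H\Lambda^j$ and $\|\fmu-\pi_h^{(j)}\fmu\|_{L^2\Lambda^j}\leqslant Ch^s\|\fmu\|_{H^s\Lambda^j}$ for $\fmu\in H^s\Lambda^j$; these are standard in finite element exterior calculus and are compatible with the isomorphism $\hf_h\Lambda^k\cong\hf\Lambda^k$.

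For the term $\inf_{\fmu_h\in\fW_h\Lambda^k}\|\fomega-\fmu_h\|_{\od^k}$ I would take $\fmu_h=\pi_h^{(k)}\fomega$; then $\|\fomega-\fmu_h\|_{L^2\Lambda^k}\leqslant Ch^s\|\fomega\|_{H^s\Lambda^k}$ and, by commutativity, $\|\od^k(\fomega-\fmu_h)\|_{L^2\Lambda^{k+1}}=\|(\mathrm I-\pi_h^{(k+1)})\od^k\fomega\|_{L^2\Lambda^{k+1}}\leqslant Ch^s\|\od^k\fomega\|_{H^s\Lambda^{k+1}}$, and the $s$-regularity estimate controls both $\|\fomega\|_{H^s\Lambda^k}$ and $\|\od^k\fomega\|_{H^s\Lambda^{k+1}}$ by $C\|\ff\|_{L^2\Lambda^k}$. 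For the harmonic term $\inf_{\fvarsigma_h\in\hf_h\Lambda^k}\|\fvartheta-\fvarsigma_h\|_{L^2\Lambda^k}$, testing \eqref{eq:modelhl} against $\fmu\in\hf\Lambda^k$ shows $\fvartheta=\mathbf P_{\hf}\ff$, a harmonic field; hence $\fvartheta\in H\Lambda^k\cap H^*_0\Lambda^k$ with $\od^k\fvartheta=\odelta_k\fvartheta=0$, so $s$-regularity of $\Omega$ gives $\|\fvartheta\|_{H^s\Lambda^k}\leqslant C\|\fvartheta\|_{L^2\Lambda^k}\leqslant C\|\ff\|_{L^2\Lambda^k}$, and taking $\fvarsigma_h$ to be the $L^2$-projection of $\pi_h^{(k)}\fvartheta$ onto $\hf_h\Lambda^k$ and invoking $\hf_h\Lambda^k\cong\hf\Lambda^k$ yields $\|\fvartheta-\fvarsigma_h\|_{L^2\Lambda^k}\leqslant Ch^s\|\fvartheta\|_{H^s\Lambda^k}$.

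The remaining term $\inf_{\ftau_h\in\fW_h\Lambda^{k-1}}\|\odelta_k\fomega-\ftau_h\|_{\od^{k-1}}$ is where I expect the only real difficulty. Its $L^2$ part is immediate: with $\ftau_h=\pi_h^{(k-1)}\odelta_k\fomega$ the $s$-regularity estimate gives $\|\odelta_k\fomega-\ftau_h\|_{L^2\Lambda^{k-1}}\leqslant Ch^s\|\odelta_k\fomega\|_{H^s\Lambda^{k-1}}\leqslant Ch^s\|\ff\|_{L^2\Lambda^k}$. The obstacle is the exterior-derivative contribution $\|\od^{k-1}(\odelta_k\fomega-\ftau_h)\|_{L^2\Lambda^k}$: by the strong form of \eqref{eq:modelhlori} one has $\od^{k-1}\odelta_k\fomega=(\ff-\mathbf P_{\hf}\ff)-\odelta_{k+1}\od^k\fomega$, i.e.\ $\od^{k-1}\odelta_k\fomega$ is the $L^2$-orthogonal exact component of $\ff-\mathbf P_{\hf}\ff$, which is merely $L^2$ and acquires no extra Sobolev smoothness on a general $s$-regular domain, so plain commuting interpolation of $\odelta_k\fomega$ does not supply a rate for this piece. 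My plan is therefore to approximate $\od^{k-1}\odelta_k\fomega$ inside the discrete exact space $\R(\od^{k-1},\fW_h\Lambda^{k-1})$ rather than through $\ftau_h$: using the discrete Hodge decomposition of $\mathcal P_0\Lambda^k(\mathcal G_h)$ from Lemma \ref{lem:hoddecconst} and the discrete Poincar\'e bounds of Lemma \ref{lem:piwfs}, correct $\ftau_h$ by an element of $\N(\od^{k-1},\fW_h\Lambda^{k-1})$ so as to match $\od^{k-1}\odelta_k\fomega$ optimally while keeping the $L^2$ error at order $h^s$; equivalently — and this is the cleaner route, consistent with the indirect strategy announced in the Introduction — one transports the estimate through the equivalence of the primal scheme with the classical mixed scheme (Lemma \ref{lem:disequivalent}), so that the sharp $\mathcal O(h^s)$ analysis of the mixed Hodge--Laplace discretization of \cite{Arnold.D;Falk.R;Winther.R2006acta} bounds $\|\odelta_k\fomega-\odelta_{k,h}\fomega_h\|_{L^2\Lambda^{k-1}}$ and the residual $h\|\ff\|_{L^2\Lambda^k}$ directly. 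Once this single estimate is in hand, summing the three contributions and the $h\|\ff\|$ term and using $h\leqslant h^s$ gives \eqref{eq:errests-reg}.
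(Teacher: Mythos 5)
Your final (``cleaner'') route is exactly the paper's proof: the paper does not bound the three best-approximation terms in Lemma \ref{lem:estbyreg} at all, but instead substitutes the $\mathcal{O}(h^s)$ estimate of Lemma \ref{lem:classicalmix} for the classical mixed scheme into the place of \eqref{eq:errorfinal} and reruns the proof of Lemma \ref{lem:estbyreg} (intermediate scheme with $\ff_0=\mathbf{P}^k_0\ff$, the equivalence of Lemma \ref{lem:disequivalent}, and the $\mathcal{O}(h)\leqslant\mathcal{O}(h^s)$ data-perturbation terms), which only ever needs $\|\tilde\fsigma-\tilde\fsigma_h\|_{L^2\Lambda^{k-1}}$ rather than the graph norm $\|\cdot\|_{\od^{k-1}}$. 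You correctly diagnosed why the direct route through Lemma \ref{lem:estbyreg} fails for $\inf_{\ftau_h}\|\odelta_k\fomega-\ftau_h\|_{\od^{k-1}}$; note only that your parenthetical repair of correcting $\ftau_h$ by an element of $\N(\od^{k-1},\fW_h\Lambda^{k-1})$ cannot work, since adding a kernel element leaves $\od^{k-1}\ftau_h$ unchanged, so the fallback via Lemma \ref{lem:disequivalent} is the argument that actually carries the proof.
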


We postpone the proofs of Lemmas \ref{lem:estbyreg} and \ref{lem:ests-reg} into Section \ref{sec:proofs} after some technical preparations. The main result of the paper, the theorem below, follows by Lemmas \ref{lem:estbyreg} and \ref{lem:ests-reg} directly. 

\begin{theorem}
Let $\fomega$ and $\fomega_h$ be the solutions of \eqref{eq:modelhlori} and \eqref{eq:modelhldisonefield}, respectively. Then
\begin{multline}
\|\fomega-\fomega_h\|_{L^2\Lambda^k}+\|\od^k_h(\fomega-\fomega_h)\|_{L^2\Lambda^{k+1}}+\|\odelta_{k,h}(\fomega-\fomega_h)\|_{L^2\Lambda^{k-1}}
\\
\leqslant C(\inf_{\ftau_h\in\fW_h\Lambda^{k-1}}\|\odelta_k\fomega-\ftau_h\|_{\od^{k-1}}+\inf_{\fmu_h\in\fW_h\Lambda^k}\|\fomega-\fmu_h\|_{\od^k}+\inf_{\fvarsigma_h\in\hf_h\Lambda^k}\|\mathbf{P}_{\hf\Lambda^k}\ff-\fvarsigma_h\|_{L^2\Lambda^k}+h\|\ff\|_{L^2\Lambda^k}).
\end{multline}
Let $\Omega$ be $s$-regular. Then
\begin{equation}
\|\fomega-\fomega_h\|_{L^2\Lambda^k}+\|\od^k\fomega-\od^k_h\fomega_h\|_{L^2\Lambda^{k+1}}+\|\odelta_k\fomega-\odelta_{k,h}\fomega_h\|_{L^2\Lambda^{k-1}}\leqslant Ch^s\|\ff\|_{L^2\Lambda^k}.
\end{equation}
\end{theorem}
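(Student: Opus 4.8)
The plan is to derive the theorem directly from Lemmas~\ref{lem:estbyreg} and~\ref{lem:ests-reg}, after identifying the one-field problems \eqref{eq:modelhlori} and \eqref{eq:modelhldisonefield} with the two-field problems \eqref{eq:modelhl} and \eqref{eq:modelhldis}. First I would record the elementary continuous equivalence: in \eqref{eq:modelhl} one has $\fvartheta\in\hf\Lambda^k$, so testing the second equation with $\fmu\in\hf\Lambda^k$ and using $\od^k\fmu=0$, $\odelta_k\fmu=0$ gives $\langle\fvartheta,\fmu\rangle_{L^2\Lambda^k}=\langle\ff,\fmu\rangle_{L^2\Lambda^k}$ for all $\fmu\in\hf\Lambda^k$, hence $\fvartheta=\mathbf{P}_{\hf\Lambda^k}\ff$; substituting this back, the second equation of \eqref{eq:modelhl} becomes exactly the second equation of \eqref{eq:modelhlori}, and conversely a solution $\fomega$ of \eqref{eq:modelhlori} together with $\fvartheta:=\mathbf{P}_{\hf\Lambda^k}\ff$ solves \eqref{eq:modelhl}. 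Thus the $\fomega$-component of the solution of \eqref{eq:modelhl} is precisely the solution $\fomega$ of \eqref{eq:modelhlori}.

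The same computation on the discrete level uses $\hf_h\Lambda^k\subset\fV_{\od\cap\odelta}^{\rm m, +\odelta}\Lambda^k$ (which holds by Lemma~\ref{lem:disharm}, since $\hf_h\Lambda^k\subset\mathcal{P}_0\Lambda^k(\mathcal{G}_h)$ and is therefore annihilated by $\od^k_h$ and $\odelta_{k,h}$): testing the second equation of \eqref{eq:modelhldis} with $\fmu_h\in\hf_h\Lambda^k$ yields $\fvartheta_h=\mathbf{P}_{\hf_h\Lambda^k}\ff$, so \eqref{eq:modelhldis} is equivalent to \eqref{eq:modelhldisonefield}, with the $\fomega_h$-components coinciding. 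Next I would note that the piecewise operators agree with the global ones on the exact solution: since $\fomega\in H\Lambda^k\cap H^*_0\Lambda^k$, the distributional $\od^k\fomega$ and $\odelta_k\fomega$ carry no interelement contributions, so $\od^k_h\fomega=\od^k\fomega$ and $\odelta_{k,h}\fomega=\odelta_k\fomega$; consequently $\od^k_h(\fomega-\fomega_h)=\od^k\fomega-\od^k_h\fomega_h$ and $\odelta_{k,h}(\fomega-\fomega_h)=\odelta_k\fomega-\odelta_{k,h}\fomega_h$.

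With these two identifications in hand, the first assertion is obtained by invoking Lemma~\ref{lem:estbyreg} for the pair $(\fvartheta,\fomega)$, $(\fvartheta_h,\fomega_h)$, discarding the nonnegative term $\|\fvartheta-\fvartheta_h\|_{L^2\Lambda^k}$ on the left-hand side, and rewriting the harmonic best-approximation term on the right-hand side via $\fvartheta=\mathbf{P}_{\hf\Lambda^k}\ff$, so that $\inf_{\fvarsigma_h\in\hf_h\Lambda^k}\|\fvartheta-\fvarsigma_h\|_{L^2\Lambda^k}=\inf_{\fvarsigma_h\in\hf_h\Lambda^k}\|\mathbf{P}_{\hf\Lambda^k}\ff-\fvarsigma_h\|_{L^2\Lambda^k}$. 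The second assertion follows in the same manner from Lemma~\ref{lem:ests-reg}, after the identical replacement of piecewise operators by global ones and the identical deletion of the $\|\fvartheta-\fvartheta_h\|_{L^2\Lambda^k}$ term.

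There is no genuine obstacle at this stage: all the analytic substance—the base operator pair construction, the discrete Poincar\'e inequality (Lemma~\ref{lem:pifes}), and the indirect error analysis routed through the classical mixed scheme—resides in the lemmas already proved. The only point requiring mild care is the bookkeeping just described, namely that the one-field/two-field passage is exact (introducing no stray constants) and that the substitution $\fvartheta\mapsto\mathbf{P}_{\hf\Lambda^k}\ff$ is carried out consistently in the best-approximation term of the estimate.
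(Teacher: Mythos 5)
Your proposal is correct and follows essentially the same route as the paper, which obtains the theorem directly from Lemmas~\ref{lem:estbyreg} and~\ref{lem:ests-reg} after noting the equivalence of \eqref{eq:modelhlori}/\eqref{eq:modelhl} and of \eqref{eq:modelhldisonefield}/\eqref{eq:modelhldis} together with $\fvartheta=\mathbf{P}_{\hf\Lambda^k}\ff$. Your extra bookkeeping (testing with harmonic forms to identify $\fvartheta$ and $\fvartheta_h$, and identifying the piecewise with the global operators on the exact solution) is exactly the implicit content of the paper's ``follows directly'' remark.
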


\begin{remark}
We here discuss the global finite element space based on local shape function space $\pddempdeL^k(T)$ and for the continuous space $H\Lambda^k\cap H^*_0\Lambda^k$. The discussions on global finite element space based on local shape function space $\pddempdL^k(T)$ and/or for the continuous space $H_0\Lambda^k\cap H^*\Lambda^k$ can be carried out in quite a symmetric way. 
\end{remark}

\subsubsection{An auxiliary scheme}
We use the classical mixed method of the Hodge-Laplace problem as an auxiliary scheme. The mixed formulation of the Hodge-Laplace problem reads: find $(\tilde\fvartheta,\tilde\fsigma,\tilde\fomega)\in \hf\Lambda^k\times H\Lambda^{k-1}\times H\Lambda^k$ such that 
\begin{equation}\label{eq:classicalmix}
\left\{
\begin{array}{cccll}
&&\langle\tilde\fomega,\fvarsigma\rangle_{L^2\Lambda^k}&=0&\forall\,\fvarsigma\in\,\hf\Lambda^k
\\
&\langle\tilde\fsigma,\ftau\rangle_{L^2\Lambda^{k-1}}&-\langle\tilde\fomega,\od^{k-1}\ftau\rangle_{L^2\Lambda^k}&=0&\forall\,\ftau\in H\Lambda^{k-1}
\\
\langle\tilde\fvartheta,\fmu\rangle_{L^2\Lambda^k}&+\langle\od^{k-1}\tilde\fsigma,\fmu_h\rangle_{L^2\Lambda^k}&+\langle\od^k\tilde\fomega,\od^k\fmu\rangle_{L^2\Lambda^{k+1}}&=\langle\ff,\fmu\rangle_{L^2\Lambda^k}&\forall\,\fmu\in H\Lambda^k,
\end{array}
\right.
\end{equation}
and the corresponding discretization is to find $(\tilde\fvartheta_h,\tilde\fsigma_h,\tilde\fomega_h)\in \hf_h\Lambda^k\times \fW_h\Lambda^{k-1}\times\fW_h\Lambda^k$ such that 
\begin{equation}\label{eq:classicalmixdis}
\left\{
\begin{array}{cccll}
&&\langle\tilde\fomega_h,\fvarsigma_h\rangle_{L^2\Lambda^k}&=0&\forall\,\fvarsigma_h\in\,\hf_h\Lambda^k
\\
&\langle\tilde\fsigma_h,\ftau_h\rangle_{L^2\Lambda^{k-1}}&-\langle\tilde\fomega_h,\od^{k-1}\ftau_h\rangle_{L^2\Lambda^k}&=0&\forall\,\ftau_h\in\fW_h\Lambda^{k-1}
\\
\langle\tilde\fvartheta,\fmu_h\rangle_{L^2\Lambda^k}&+\langle\od^{k-1}\tilde\fsigma_h,\fmu_h\rangle_{L^2\Lambda^k}&+\langle\od^k\tilde\fomega_h,\od^k\fmu_h\rangle_{L^2\Lambda^{k+1}}&=\langle\ff,\fmu_h\rangle_{L^2\Lambda^k}&\forall\,\fmu_h\in\fW_h\Lambda^k. 
\end{array}
\right.
\end{equation}

\begin{lemma}\label{lem:classicalmix}
Let $(\fvartheta,\fomega)$ and $(\tilde\fvartheta,\tilde\fsigma,\tilde\fomega)$ be the solutions of \eqref{eq:modelhl} and \eqref{eq:classicalmix}, respectively; then
\begin{enumerate}
\item $\fomega=\tilde\fomega$, $\fvartheta=\tilde\fvartheta$, and $\tilde\fsigma=\odelta_k\fomega$;
\item let $(\tilde\fvartheta_h,\tilde\fsigma_h,\tilde\fomega_h)$ be the solution of \eqref{eq:classicalmixdis}; then
\begin{multline*}
\|\tilde\fsigma-\tilde\fsigma_h\|_{\od^{k-1}}+\|\tilde\fomega-\tilde\fomega_h\|_{\od^k}+\|\tilde\fvartheta-\tilde\fvartheta_h\|_{L^2\Lambda^k}
\\
\leqslant C(\inf_{\ftau_h\in\fW_h\Lambda^{k-1}}\|\tilde\fsigma-\ftau_h\|_{\od^{k-1}}+\inf_{\fmu_h\in\fW_h\Lambda^k}\|\tilde\fomega-\fmu_h\|_{\od^k}+\inf_{\fvarsigma_h\in\hf_h\Lambda^k}\|\tilde\fvartheta-\fvarsigma_h\|_{L^2\Lambda^k}+h\|\ff\|_{L^2\Lambda^k});
\end{multline*}
\item for any $\ff\in L^2\Lambda^k$ and $\Omega$ being $s$-regular,
$$
\|\tilde\fsigma-\tilde\fsigma_h\|_{L^2\Lambda^k}+\|\tilde\fomega-\tilde\fomega_h\|_{\od^k}+\|\tilde\fvartheta-\tilde\fvartheta_h\|_{L^2\Lambda^k}\leqslant Ch^s\|\ff\|_{L^2\Lambda^k}.
$$
\end{enumerate}
\end{lemma}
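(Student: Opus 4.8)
The plan is to establish the three assertions in order, relying on the well-posedness of both weak problems. Problem \eqref{eq:modelhl} (equivalent to \eqref{eq:modelhlori}) is well posed because $\R(\oT,H\Lambda^k\cap H^*_0\Lambda^k)$ is closed and $\N(\oT,H\Lambda^k\cap H^*_0\Lambda^k)=\hf\Lambda^k$, as established in Section \ref{sec:fes}; problem \eqref{eq:classicalmix} is well posed by the standard Hodge-theoretic inf--sup analysis of \cite{Arnold.D;Falk.R;Winther.R2006acta}. So for (1) it suffices to show that the solution $(\tilde\fvartheta,\tilde\fsigma,\tilde\fomega)$ of \eqref{eq:classicalmix} produces a solution of \eqref{eq:modelhl} of the form $(\fvartheta,\fomega)=(\tilde\fvartheta,\tilde\fomega)$ with $\tilde\fsigma=\odelta_k\tilde\fomega$, and then invoke uniqueness. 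Testing the second equation of \eqref{eq:classicalmix} against $\ftau\in\mathcal{C}_0^\infty\Lambda^{k-1}$ gives $\odelta_k\tilde\fomega=\tilde\fsigma$ in the distributional sense, hence $\tilde\fomega\in H^*\Lambda^k$ with $\odelta_k\tilde\fomega=\tilde\fsigma\in L^2\Lambda^{k-1}$; since the same identity persists for all $\ftau\in H\Lambda^{k-1}$ with no boundary contribution, the integration-by-parts characterization of $H^*_0\Lambda^k$ forces $\tilde\fomega\in H^*_0\Lambda^k$. Therefore $\tilde\fomega\in H\Lambda^k\cap H^*_0\Lambda^k$, the first equation of \eqref{eq:classicalmix} is literally the first equation of \eqref{eq:modelhl}, and restricting the third equation of \eqref{eq:classicalmix} to $\fmu\in H\Lambda^k\cap H^*_0\Lambda^k$ and integrating the middle term by parts, $\langle\od^{k-1}\tilde\fsigma,\fmu\rangle=\langle\tilde\fsigma,\odelta_k\fmu\rangle=\langle\odelta_k\tilde\fomega,\odelta_k\fmu\rangle$, turns it into the second equation of \eqref{eq:modelhl}. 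By uniqueness for \eqref{eq:modelhl}, $\fomega=\tilde\fomega$, $\fvartheta=\tilde\fvartheta$, and $\tilde\fsigma=\odelta_k\tilde\fomega=\odelta_k\fomega$.

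For (2), I would import the a priori error analysis of the conforming mixed method. The lowest-degree Whitney spaces form a subcomplex $\fW_h\Lambda^{k-1}\to\fW_h\Lambda^k\to\fW_h\Lambda^{k+1}$ of the de Rham complex (maps $\od^{k-1},\od^k$), equipped with bounded cochain projections, namely the canonical Whitney interpolants $\pi_h$, which commute with $\od$. Together with the uniform discrete Poincar\'e inequality $\icr(\od^k,\fW_h\Lambda^k)\leqslant C_{k,n}$ of Lemma \ref{lem:piwfs} and the isomorphism $\hf_h\Lambda^k\cong\hf\Lambda^k$, this places \eqref{eq:classicalmixdis} exactly in the abstract framework of \cite{Arnold.D;Falk.R;Winther.R2006acta}, which yields well-posedness of \eqref{eq:classicalmixdis} with mesh-independent stability and a quasi-optimal bound. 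The only term there that is not a plain best-approximation error is the discrepancy due to $\hf_h\Lambda^k\not\subset\hf\Lambda^k$; as in \cite{Arnold.D;Falk.R;Winther.R2006acta} it is controlled by $\mu_h\,\|\tilde\fvartheta\|_{L^2\Lambda^k}$, where $\mu_h:=\sup_{0\neq\fq\in\hf\Lambda^k}\|\fq-\pi_h^k\fq\|_{L^2\Lambda^k}/\|\fq\|_{L^2\Lambda^k}$ is the Whitney-interpolation defect on the (finite-dimensional) harmonic space. Using $\mu_h\leqslant Ch$ and $\|\tilde\fvartheta\|_{L^2\Lambda^k}=\|\mathbf{P}_{\hf\Lambda^k}\ff\|_{L^2\Lambda^k}\leqslant\|\ff\|_{L^2\Lambda^k}$ gives the stated $h\|\ff\|_{L^2\Lambda^k}$; since $\tilde\fvartheta\in\hf\Lambda^k$, its best approximation is over $\hf_h\Lambda^k$, as written.

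For (3), I would combine (2) with the regularity statement recalled just above: when $\Omega$ is $s$-regular, $\fomega=\tilde\fomega\in H^s\Lambda^k$, $\od^k\fomega\in H^s\Lambda^{k+1}$, $\odelta_k\fomega=\tilde\fsigma\in H^s\Lambda^{k-1}$, all with $H^s$-norms $\leqslant C\|\ff\|_{L^2\Lambda^k}$, and also $\hf\Lambda^k\subset H^s\Lambda^k$. The standard $H^s$-approximation estimates for the Whitney forms then bound $\inf_{\fmu_h\in\fW_h\Lambda^k}\|\tilde\fomega-\fmu_h\|_{\od^k}$, $\inf_{\ftau_h\in\fW_h\Lambda^{k-1}}\|\tilde\fsigma-\ftau_h\|_{L^2\Lambda^{k-1}}$ and $\inf_{\fvarsigma_h\in\hf_h\Lambda^k}\|\tilde\fvartheta-\fvarsigma_h\|_{L^2\Lambda^k}$ each by $Ch^s\|\ff\|_{L^2\Lambda^k}$; substituting into (2) gives the claim. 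Note that for $\tilde\fsigma=\odelta_k\fomega$ only the $L^2$-error is asserted at rate $h^s$, since $\od^{k-1}\tilde\fsigma=\od^{k-1}\odelta_k\fomega$ carries no regularity beyond $L^2$ for general $\ff\in L^2\Lambda^k$; this is exactly what the subsequent application of the lemma needs.

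The step I expect to be delicate is the harmonic-form discrepancy in (2): producing a genuine $O(h)$ bound for $\mu_h$ (equivalently, for $\inf_{\fvarsigma_h\in\hf_h\Lambda^k}\|\tilde\fvartheta-\fvarsigma_h\|_{L^2\Lambda^k}$ against $h\|\ff\|_{L^2\Lambda^k}$) rests on the first-order Whitney approximability of the finite-dimensional space $\hf\Lambda^k$, hence ultimately on the regularity of harmonic fields; everything else is bookkeeping inside the Arnold--Falk--Winther framework. A minor additional point is the verification in (1) that the weak mixed formulation encodes precisely the essential boundary condition defining $H^*_0\Lambda^k$, i.e.\ the passage from test forms in $\mathcal{C}_0^\infty\Lambda^{k-1}$ to all of $H\Lambda^{k-1}$.
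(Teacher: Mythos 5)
The paper proves this lemma by a one-line citation to \cite{Arnold.D;Falk.R;Winther.R2006acta} ((7.17), (7.30) and Theorem~7.10), so your overall route---identify the mixed solution with the primal one and then import the Arnold--Falk--Winther error theory for \eqref{eq:classicalmixdis}---is the same as the paper's. Your treatment of item (1) (distributional identification $\tilde\fsigma=\odelta_k\tilde\fomega$, the integration-by-parts characterization of $H^*_0\Lambda^k$, then uniqueness of \eqref{eq:modelhl}) is complete and correct, and in fact supplies detail the paper leaves to the reference.

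Your reconstructions of items (2) and (3), however, contain genuine gaps. For (2), the harmonic-gap contribution is not of the form $\mu_h\|\tilde\fvartheta\|_{L^2\Lambda^k}$ with $\mu_h\leqslant Ch$: on an $s$-regular domain the fields in $\hf\Lambda^k$ are only in $H^s\Lambda^k$ ($s=1/2$ for a general Lipschitz domain), so their Whitney-approximation defect is only $O(h^s)$, and the canonical interpolant you invoke is not even bounded on such fields for $s\leqslant 1/2$ (smoothed cochain projections are needed). The $h\|\ff\|_{L^2\Lambda^k}$ term in the lemma arises instead from the product structure of the gap term in the AFW estimate, roughly $\mu\cdot\inf_{v_h\in\fW_h\Lambda^k}\|\mathbf{P}_{\mathfrak{B}}\tilde\fomega-v_h\|_{\od^k}\lesssim h^{s}\cdot h^{s}\|\ff\|_{L^2\Lambda^k}\leqslant h\|\ff\|_{L^2\Lambda^k}$ for $s\geqslant 1/2$, where $\mathbf{P}_{\mathfrak{B}}$ is the $L^2$ projection onto $\R(\od^{k-1},H\Lambda^{k-1})$ (cf.\ Theorem~7.10 of \cite{Arnold.D;Falk.R;Winther.R2006acta} and Theorem~3.9 of \cite{Arnold.D;Falk.R;Winther.R2010bams}); with your single-factor bound you would only obtain $h^s\|\ff\|_{L^2\Lambda^k}$, which is weaker than the stated item (2). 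For (3), the deduction ``substitute the $H^s$ approximation estimates into (2)'' does not go through: the right-hand side of (2) contains $\inf_{\ftau_h\in\fW_h\Lambda^{k-1}}\|\tilde\fsigma-\ftau_h\|_{\od^{k-1}}$, and $\od^{k-1}\tilde\fsigma=\od^{k-1}\odelta_k\fomega$ is merely in $L^2$ for general $\ff\in L^2\Lambda^k$, so this term carries no $O(h^s)$ rate. Item (3) drops the $\od^{k-1}$-part of the $\sigma$-error precisely because it must be obtained from the separate, decoupled (duality-type) improved estimates of \cite{Arnold.D;Falk.R;Winther.R2006acta}---the ones the paper actually cites---rather than by plugging approximation orders into the quasi-optimal bound (2).
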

\begin{proof}
The proof can be found in \cite{Arnold.D;Falk.R;Winther.R2006acta}, particularly (7.17), (7.30) and Theorem 7.10 therein. 
\end{proof}
We here note that $\tilde\fvartheta=\mathbf{P}_{\hf\Lambda^k}\ff$. Denote $\mathbf{P}^k_0$ the $L^2$ projection to $\mathcal{P}_0\Lambda^k(\mathcal{G}_h)$. 
\begin{lemma}\label{lem:disequivalent}
Given $\ff_0\in\mathcal{P}_0\Lambda^k(\mathcal{G}_h)$, let $(\tilde\fvartheta_h,\tilde\fsigma_h,\tilde\fomega_h)\in \hf_h\Lambda^k\times \fW_h\Lambda^{k-1}\times\fW_h\Lambda^k$ be such that 
\begin{equation}\label{eq:classdisf0}
\left\{
\begin{array}{cccll}
&&\langle\tilde\fomega_h,\fvarsigma_h\rangle_{L^2\Lambda^k}&=0&\forall\,\fvarsigma_h\in\,\hf_h\Lambda^k
\\
&\langle\tilde\fsigma_h,\ftau_h\rangle_{L^2\Lambda^{k-1}}&-\langle\tilde\fomega_h,\od^{k-1}\ftau_h\rangle_{L^2\Lambda^k}&=0&\forall\,\ftau_h\in\fW_h\Lambda^{k-1}
\\
\langle\tilde\fvartheta,\fmu_h\rangle_{L^2\Lambda^k}&+\langle\od^{k-1}\tilde\fsigma_h,\fmu_h\rangle_{L^2\Lambda^k}&+\langle\od^k\tilde\fomega_h,\od^k\fmu_h\rangle_{L^2\Lambda^{k+1}}&=\langle\ff_0,\fmu_h\rangle_{L^2\Lambda^k}&\forall\,\fmu_h\in\fW_h\Lambda^k,
\end{array}
\right.
\end{equation}
and let $(\bar\fvartheta_h,\bar\fomega_h)\in \hf_h\Lambda^k\times \fV_{\od\cap\odelta}^{\rm m, +\odelta}\Lambda^k$ be such that
\begin{equation}\label{eq:modelhldisff0}
\left\{
\begin{array}{rll}
\langle\bar\fomega_h,\fvarsigma_h\rangle_{L^2\Lambda^k}&=0, &\forall\,\fvarsigma_h\in \hf_h\Lambda^k,
\\  
\langle\bar\fvartheta_h,\fmu_h\rangle_{L^2\Lambda^k}+\langle\od^k_h\bar\fomega_h,\od^k_h\fmu_h\rangle_{L^2\Lambda^{k+1}}+\langle\odelta_{k,h}\bar\fomega_h,\odelta_{k,h}\fmu_h\rangle_{L^2\Lambda^{k-1}}&=\langle\ff_0,\fmu_h\rangle_{L^2\Lambda^k},& \forall\,\fmu_h\in \fV_{\od\cap\odelta}^{\rm m, +\odelta}\Lambda^k.
\end{array}\right.
\end{equation}
Then
\begin{equation}
\bar\fvartheta_h=\tilde\fvartheta_h,\ \ \odelta_{k,h}\bar\fomega_h=\tilde\fsigma_h,\ \ \od^k_h\bar\fomega_h=\od^k\tilde\fomega_h,\ \ \mbox{and}\  \ \mathbf{P}^k_0\bar\fomega_h=\mathbf{P}^k_0\tilde\fomega_h. 
\end{equation}
\end{lemma}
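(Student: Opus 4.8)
The strategy is \emph{construct-and-uniquify}: both \eqref{eq:classdisf0} and \eqref{eq:modelhldisff0} are uniquely solvable (\eqref{eq:modelhldisff0} by the discrete Poincar\'e inequality of Lemma~\ref{lem:pifes} together with $\hf_h\Lambda^k=\N(\oT_h,\fV_{\od\cap\odelta}^{\rm m, +\odelta}\Lambda^k)\subset\fV_{\od\cap\odelta}^{\rm m, +\odelta}\Lambda^k$ from Lemma~\ref{lem:disharm}, exactly as for \eqref{eq:modelhldisonefield}), so it suffices to manufacture from $(\tilde\fvartheta_h,\tilde\fsigma_h,\tilde\fomega_h)$ a pair $(\hat\fvartheta_h,\hat\fomega_h)$ solving \eqref{eq:modelhldisff0} and to read off the four identities from the construction.

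First I would build $\hat\fomega_h$ cellwise. Using the direct decomposition $\pddempdeL^k(T)=\mathcal{P}_0\Lambda^k(T)\oplus\okappa_T(\mathcal{P}_0\Lambda^{k+1}(T))\oplus\star\okappa_T\star(\mathcal{P}_0\Lambda^{k-1}(T))\oplus\mathcal{H}^2_\odelta\Lambda^k(T)$ (direct, by the mapping properties collected in Section~\ref{sec:pre}), split $\tilde\fomega_h|_T=\fomega_{00}+\fomega_{\od}$ into its constant and $\okappa_T$-parts and $\tilde\fsigma_h|_T$ into its constant part $\tilde\fsigma_{h,0}$ and Koszul part $\tilde\fsigma_{h,1}$. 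By Lemma~\ref{lem:surjecdde} one may pick $\fmu_{\odelta,0}\in\star\okappa_T\star(\mathcal{P}_0\Lambda^{k-1}(T))$ with $\odelta_k\fmu_{\odelta,0}=\tilde\fsigma_{h,0}-\odelta_k\fomega_{\od}$ (note $\odelta_k\fomega_{\od}\in\mathcal{P}_0\Lambda^{k-1}(T)$) and $\fmu_{\odelta,1}\in\mathcal{H}^2_\odelta\Lambda^k(T)$ with $\odelta_k\fmu_{\odelta,1}=\tilde\fsigma_{h,1}$, and set $\hat\fomega_h|_T:=\tilde\fomega_h|_T+\fmu_{\odelta,0}+\fmu_{\odelta,1}\in\pddempdeL^k(T)$. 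Since $\star\okappa_T\star(\mathcal{P}_0\Lambda^{k-1}(T))$ and $\mathcal{H}^2_\odelta\Lambda^k(T)$ are $\od^k$-closed and of zero mean on $T$, the correction $\hat\fomega_h-\tilde\fomega_h$ is cellwise $\od^k_h$-free and $L^2$-orthogonal to $\mathcal{P}_0\Lambda^k(\mathcal{G}_h)$; hence $\od^k_h\hat\fomega_h=\od^k\tilde\fomega_h$, $\mathbf{P}^k_0\hat\fomega_h=\mathbf{P}^k_0\tilde\fomega_h$, and $\odelta_{k,h}\hat\fomega_h=\tilde\fsigma_h$. Membership $\hat\fomega_h\in\fV_{\od\cap\odelta}^{\rm m, +\odelta}\Lambda^k$ is then checked against \eqref{eq:deffems}: the first condition holds because $\tilde\fomega_h\in\fW_h\Lambda^k$ already satisfies it (by the characterization of $\fW_h\Lambda^k$ recorded in Section~\ref{sec:pre}) and $\hat\fomega_h-\tilde\fomega_h$ is $L^2$-orthogonal to $\odelta_{k+1,h}\feta_h\in\mathcal{P}_0\Lambda^k(\mathcal{G}_h)$; the second holds because the second equation of \eqref{eq:classdisf0} gives $\langle\odelta_{k,h}\hat\fomega_h,\ftau_h\rangle=\langle\tilde\fsigma_h,\ftau_h\rangle=\langle\tilde\fomega_h,\od^{k-1}\ftau_h\rangle$ while $\hat\fomega_h-\tilde\fomega_h$ is $L^2$-orthogonal to $\od^{k-1}\ftau_h\in\mathcal{P}_0\Lambda^k(\mathcal{G}_h)$.

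Then I would set $\hat\fvartheta_h:=\tilde\fvartheta_h\in\hf_h\Lambda^k$ and verify \eqref{eq:modelhldisff0}. Its first equation reduces to the first equation of \eqref{eq:classdisf0} via $\hf_h\Lambda^k\subset\mathcal{P}_0\Lambda^k(\mathcal{G}_h)$ (Lemma~\ref{lem:hoddecconst}) and $\mathbf{P}^k_0\hat\fomega_h=\mathbf{P}^k_0\tilde\fomega_h$. For the second equation, the crucial device is that any $\fmu_h\in\fV_{\od\cap\odelta}^{\rm m, +\odelta}\Lambda^k$ has a ``Whitney part'' $\fnu_h\in\fW_h\Lambda^k$, obtained by deleting cellwise its $\star\okappa_T\star(\mathcal{P}_0\Lambda^{k-1})\oplus\mathcal{H}^2_\odelta\Lambda^k$ components; $\fnu_h$ is genuinely conforming because $\fmu_h-\fnu_h$ is cellwise $\od^k_h$-free and $L^2$-orthogonal to $\mathcal{P}_0\Lambda^k(\mathcal{G}_h)$, so $\fmu_h$ and $\fnu_h$ produce the same value in the defining relation of $\fW_h\Lambda^k$, which $\fmu_h$ satisfies by the first condition of \eqref{eq:deffems}. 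Using $\hf_h\Lambda^k\subset\mathcal{P}_0\Lambda^k(\mathcal{G}_h)$, $\ff_0\in\mathcal{P}_0\Lambda^k(\mathcal{G}_h)$ and $\mathbf{P}^k_0\fmu_h=\mathbf{P}^k_0\fnu_h$ one gets $\langle\hat\fvartheta_h,\fmu_h\rangle=\langle\tilde\fvartheta_h,\fnu_h\rangle$ and $\langle\ff_0,\fmu_h\rangle=\langle\ff_0,\fnu_h\rangle$; from $\od^k_h\fmu_h=\od^k\fnu_h$ one gets $\langle\od^k_h\hat\fomega_h,\od^k_h\fmu_h\rangle=\langle\od^k\tilde\fomega_h,\od^k\fnu_h\rangle$; and applying the \emph{second} condition of \eqref{eq:deffems} to $\fmu_h$ with test form $\ftau_h=\tilde\fsigma_h\in\fW_h\Lambda^{k-1}$, together with $\od^{k-1}\tilde\fsigma_h\in\mathcal{P}_0\Lambda^k(\mathcal{G}_h)$, one gets $\langle\odelta_{k,h}\hat\fomega_h,\odelta_{k,h}\fmu_h\rangle=\langle\tilde\fsigma_h,\odelta_{k,h}\fmu_h\rangle=\langle\fmu_h,\od^{k-1}\tilde\fsigma_h\rangle=\langle\od^{k-1}\tilde\fsigma_h,\fnu_h\rangle$. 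Summing the three, the second equation of \eqref{eq:modelhldisff0} for $\fmu_h$ becomes precisely the third equation of \eqref{eq:classdisf0} tested against $\fnu_h\in\fW_h\Lambda^k$, hence holds.

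Therefore $(\hat\fvartheta_h,\hat\fomega_h)$ solves \eqref{eq:modelhldisff0}; by uniqueness $(\bar\fvartheta_h,\bar\fomega_h)=(\tilde\fvartheta_h,\hat\fomega_h)$, and the four identities follow from the properties of $\hat\fomega_h$ recorded above. I expect the main obstacle to be the passage for the $\odelta$-term: recognizing that the ``extra'' piecewise-codifferential energy $\langle\odelta_{k,h}\hat\fomega_h,\odelta_{k,h}\fmu_h\rangle$ is, by the dual-style continuity of \eqref{eq:deffems} tested with $\tilde\fsigma_h$ and the piecewise-constancy of $\od^{k-1}\tilde\fsigma_h$, exactly the $\langle\od^{k-1}\tilde\fsigma_h,\cdot\rangle$ term of the mixed scheme; the several zero-mean / piecewise-constant orthogonalities underpinning this (and the $\od^k$-closedness and coclosedness of the enrichment spaces) are individually elementary but must be tracked with care.
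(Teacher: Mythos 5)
Your proof is correct, and its skeleton is the same as the paper's: lift the mixed solution into $\fV_{\od\cap\odelta}^{\rm m, +\odelta}\Lambda^k$ by prescribing the data $(\mathbf{P}^k_0\tilde\fomega_h,\tilde\fsigma_h,\od^k\tilde\fomega_h)$ cellwise via the bijections of Lemma~\ref{lem:surjecdde}, check that the lift solves \eqref{eq:modelhldisff0}, and conclude by uniqueness. Where you genuinely diverge is in the intermediate machinery. The paper first introduces an auxiliary $\fzeta_h\in\fW^{*,\rm abc}_{h0}\Lambda^{k+1}$ so that the third equation of \eqref{eq:classdisf0} extends from $\fW_h\Lambda^k$ to all of $\mathcal{P}^-_1\Lambda^k(\mathcal{G}_h)$, deduces $\tilde\fvartheta_h+\od^{k-1}\tilde\fsigma_h+\odelta_{k+1,h}\fzeta_h=\ff_0$ and $\od^k\tilde\fomega_h=\mathbf{P}^k_0\fzeta_h$, and assembles the four-field system \eqref{eq:completelymixed}, whose third row is what certifies the first continuity condition of \eqref{eq:deffems} for $\bar\fomega_h$. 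You avoid $\fzeta_h$ and \eqref{eq:completelymixed} altogether: membership in $\fV_{\od\cap\odelta}^{\rm m, +\odelta}\Lambda^k$ comes from the stated dual characterization of $\fW_h\Lambda^k$ plus the fact that the enrichment pieces are $\od$-closed and mean-free, and the primal equation is verified by splitting an arbitrary test function $\fmu_h\in\fV_{\od\cap\odelta}^{\rm m, +\odelta}\Lambda^k$ into a conforming Whitney part $\fnu_h$ plus a mean-free $\od$-closed remainder and converting the broken $\odelta$-energy into $\langle\od^{k-1}\tilde\fsigma_h,\fnu_h\rangle$ by testing \eqref{eq:deffems} with $\tilde\fsigma_h$. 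What each route buys: the paper's $\fzeta_h$ device packages the equivalence as a genuinely mixed four-field reformulation (conceptually illuminating, and the existence of $\fzeta_h$ is only asserted there), while your route uses only facts explicitly recorded in the paper and makes the final verification, which the paper dismisses as ``easy to verify,'' fully explicit. One small point to tighten: the $\od^k$-closedness of $\star\okappa_T\star(\mathcal{P}_0\Lambda^{k-1}(T))$ is not literally among the formulas collected in Section~\ref{sec:pre} (only the $\odelta_k$-image is); it is true and elementary (it is the Hodge dual of $\odelta_{n-k}\okappa_T\feta=0$ for constant $\feta$), so add the one-line computation rather than citing the preliminaries for it.
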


\begin{proof}
Firstly, 
$$
\langle\od^k\tilde\fomega_h,\feta_h\rangle_{L^2\Lambda^{k+1}}-\langle\tilde\fomega_h,\odelta_{k+1,h}\feta_h\rangle_{L^2\Lambda^k}=0,\ \ \forall\,\feta_h\in \fW^{*,\rm abc}_{h0}\Lambda^{k+1},
$$
and it holds with some $\fzeta_h\in \fW^{*,\rm abc}_{h0}\Lambda^{k+1}$ and for any $\fmu_h\in \mathcal{P}^-_1\Lambda^k(\mathcal{G}_h)$ that 
$$
\langle\tilde\fvartheta_h,\fmu_h\rangle_{L^2\Lambda^k}+\langle\od^{k-1}\tilde\fsigma_h,\fmu_h\rangle_{L^2\Lambda^k}+\langle\od^k\tilde\fomega_h,\od^k_h\fmu_h\rangle_{L^2\Lambda^{k+1}}-\langle\fzeta_h,\od^k_h\fmu_h\rangle_{L^2\Lambda^{k+1}}+\langle\odelta_{k+1,h}\fzeta_h,\fmu_h\rangle_{L^2\Lambda^k}=\langle\ff_0,\fmu_h\rangle_{L^2\Lambda^k}.
$$
Now we choose artibrarily $\fmu_h\in\mathcal{P}_0\Lambda^k(\mathcal{G}_h)$, and obtain 
$$
\tilde\fvartheta_h+\od^{k-1}\tilde\fsigma_h+\odelta_{k+1,h}\fzeta_h=\ff_0,
$$
and thus 
$$
\langle\od^k\tilde\fomega_h,\od^k_h\fmu_h\rangle_{L^2\Lambda^{k-1}}-\langle\fzeta_h,\od^k_h\fmu_h\rangle_{L^2\Lambda^{k+1}}=0, \ \ \forall\,\fmu_h\in \mathcal{P}^{*,-}_1(\mathcal{G}_h),
$$
which leads to further that
$$
\od^k\tilde\fomega_h=\mathbf{P}^k_0\fzeta_h. 
$$
Therefore, it holds for $\fvarsigma_h\in\hf_h\Lambda^k$, $\ftau_h\in\fW_h\Lambda^{k-1}$, $\feta_h\in \fW^{*,\rm abc}_{h0}\Lambda^{k+1}$ and $\fmu_h\in\mathcal{P}_0\Lambda^k(\mathcal{G}_h)$ that
\begin{equation}\label{eq:completelymixed}
\left\{
\begin{array}{ccccl}
&&&\langle\mathbf{P}^k_0\tilde\fomega_h,\fvarsigma_h\rangle_{L^2\Lambda^k}&=0
\\
&\langle\tilde\fsigma_h,\ftau_h\rangle_{L^2\Lambda^{k-1}}&&-\langle\mathbf{P}^k_0\tilde\fomega_h,\od^{k-1}\ftau_h\rangle_{L^2\Lambda^k}&=0
\\
&&\langle\mathbf{P}^k_0\fzeta_h,\feta_h\rangle_{L^2\Lambda^{k+1}}&-\langle\mathbf{P}^k_0\tilde\fomega_h,\odelta_{k+1,h}\feta_h\rangle_{L^2\Lambda^k}&=0
\\
\langle\tilde\fvartheta_h,\fmu_h\rangle_{L^2\Lambda^k}&+\langle\od^{k-1}\tilde\fsigma_h,\fmu_h\rangle_{L^2\Lambda^k}&+\langle\odelta_{k+1,h}\fzeta_h,\fmu_h\rangle_{L^2\Lambda^k}&&=\langle\ff_0,\fmu_h\rangle.
\end{array}
\right.
\end{equation}
By Lemma \ref{lem:hoddecconst} the discrete Hodge decomposition,  \eqref{eq:completelymixed} is well-posed. By Lemma \ref{lem:surjecdde}, there exists a unique $\bar\fomega_h\in \pddempdeL^k(\mathcal{G}_h)$, such that 
\begin{equation}
\mathbf{P}^k_0\bar\fomega_h=\mathbf{P}^k_0\tilde\fomega_h,\ \ \odelta_{k,h}\bar\fomega_h=\tilde\fsigma_h,\ \ \mbox{and}\ \ \od^k_h\bar\fomega_h=\mathbf{P}^k_0\fzeta_h=\od^k\tilde\fomega_h.
\end{equation}
Then by \eqref{eq:completelymixed}, $\bar\fomega_h\in \fV_{\od\cap\odelta}^{\rm m, +\odelta}\Lambda^k$, as $\langle\od^k_h\bar\fomega_h,\feta_h\rangle_{L^2\Lambda^{k+1}}-\langle\bar\fomega_h,\odelta_{k+1,h}\feta_h\rangle_{L^2\Lambda^k}=0$, $\forall\,\feta_h\in\fW^{*,\rm abc}_{h0}\Lambda^{k+1}$, and $\langle\odelta_{k,h}\bar\fomega_h,\ftau_h\rangle_{L^2\Lambda^{k-1}}-\langle\bar\fomega_h,\od^{k-1}\ftau_h\rangle_{L^2\Lambda^k}=0$, $\forall\,\ftau_h\in\fW_h\Lambda^{k-1}$. Further, by Lemma \ref{lem:disharm}, it is easy to verify that $(\tilde\fvartheta_h,\bar\fomega_h)$ satisfies \eqref{eq:modelhldisff0}. The proof is completed. 
\end{proof}

\subsubsection{Proofs of Lemmas \ref{lem:estbyreg} and \ref{lem:ests-reg}}
\label{sec:proofs}

\paragraph{\bf Proof of Lemma \ref{lem:estbyreg}}
Denote $\ff_0:=\mathbf{P}^k_0\ff$. Let $(\bar\fvartheta_h,\bar\fomega_h)$ be such that 
\begin{equation}\label{eq:modelhldisbar}
\left\{
\begin{array}{rll}
\langle\bar\fomega_h,\fvarsigma_h\rangle_{L^2\Lambda^k}&=0, &\forall\,\fvarsigma_h\in \hf_h\Lambda^k,
\\  
\langle\bar\fvartheta_h,\fmu_h\rangle_{L^2\Lambda^k}+\langle\od^k_h\bar\fomega_h,\od^k_h\fmu_h\rangle_{L^2\Lambda^{k+1}}+\langle\odelta_{k,h}\bar\fomega_h,\odelta_{k,h}\fmu_h\rangle_{L^2\Lambda^{k-1}}&=\langle\ff_0,\fmu_h\rangle_{L^2\Lambda^k},& \forall\,\fmu_h\in \fV_{\od\cap\odelta}^{\rm m, +\odelta}\Lambda^k.
\end{array}\right.
\end{equation}
Then $\bar\fvartheta_h=\fvartheta_h$, and, for any $\fmu_h\in \fV_{\od\cap\odelta}^{\rm m, +\odelta}\omp \hf_h\Lambda^k$, by Lemma \ref{lem:hfonT},
\begin{multline}\label{eq:nobar-bar}
\|\fomega_h-\bar\fomega_h\|_{L^2\Lambda^k}+\|\od^k_h(\fomega_h-\bar\fomega_h)\|_{L^2\Lambda^{k+1}}+\|\odelta_{k,h}(\fomega_h-\bar\fomega_h)\|_{L^2\Lambda^{k-1}}
\\
\leqslant \frac{\langle\ff-\ff_0,\fmu_h\rangle_{L^2\Lambda^k}}{\|\od^k_h\fmu_h\|_{L^2\Lambda^{k+1}}+\|\odelta_{k,h}\fmu_h\|_{L^2\Lambda^{k-1}}}=\frac{\langle\ff,\fmu_h-\mathbf{P}^k_0\fmu_h\rangle_{L^2\Lambda^k}}{\|\od^k_h\fmu_h\|_{L^2\Lambda^{k+1}}+\|\odelta_{k,h}\fmu_h\|_{L^2\Lambda^{k-1}}}\leqslant Ch\|\ff\|_{L^2\Lambda^k}.
\end{multline}
Let $(\breve\fvartheta_h,\breve\fsigma_h,\breve\fomega_h)\in \hf_h\Lambda^k\times \fW_h\Lambda^{k-1}\times\fW_h\Lambda^k$ be such that 
\begin{equation}\label{eq:modelhldismixbreve}
\left\{
\begin{array}{cccll}
&&\langle\breve\fomega_h,\fvarsigma_h\rangle_{L^2\Lambda^k}&=0&\forall\,\fvarsigma_h\in\,\hf_h\Lambda^k
\\
&\langle\breve\fsigma_h,\ftau_h\rangle_{L^2\Lambda^{k-1}}&-\langle\breve\fomega_h,\od^{k-1}\ftau_h\rangle_{L^2\Lambda^k}&=0&\forall\,\ftau_h\in\fW_h\Lambda^{k-1}
\\
\langle\breve\fvartheta,\fmu_h\rangle_{L^2\Lambda^k}&+\langle\od^{k-1}\breve\fsigma_h,\fmu_h\rangle_{L^2\Lambda^k}&+\langle\od^k\breve\fomega_h,\od^k\fmu_h\rangle_{L^2\Lambda^{k+1}}&=\langle\ff_0,\fmu_h\rangle_{L^2\Lambda^k}&\forall\,\fmu_h\in\fW_h\Lambda^k.
\end{array}
\right.
\end{equation}
Then, by Lemma \ref{lem:disequivalent}, 
\begin{equation}\label{eq:disconnection}
\bar\fvartheta_h=\breve\fvartheta_h,\ \ \odelta_{k,h}\bar\fomega_h=\breve\fsigma_h,\ \ \od^k_h\bar\fomega_h=\od^k\breve\fomega_h,\ \ \mbox{and}\ \  \mathbf{P}^k_0\bar\fomega_h=\mathbf{P}^k_0\breve\fomega_h. 
\end{equation}
Let $(\tilde\fvartheta_h,\tilde\fsigma_h,\tilde\fomega_h)\in \hf_h\Lambda^k\times \fW_h\Lambda^{k-1}\times\fW_h\Lambda^k$ be such that 
\begin{equation}\label{eq:modelhldismixtilde}
\left\{
\begin{array}{cccll}
&&\langle\tilde\fomega_h,\fvarsigma_h\rangle_{L^2\Lambda^k}&=0&\forall\,\fvarsigma_h\in\,\hf_h\Lambda^k
\\
&\langle\tilde\fsigma_h,\ftau_h\rangle_{L^2\Lambda^{k-1}}&-\langle\tilde\fomega_h,\od^{k-1}\ftau_h\rangle_{L^2\Lambda^k}&=0&\forall\,\ftau_h\in\fW_h\Lambda^{k-1}
\\
\langle\tilde\fvartheta,\fmu_h\rangle_{L^2\Lambda^k}&+\langle\od^{k-1}\tilde\fsigma_h,\fmu_h\rangle_{L^2\Lambda^k}&+\langle\od^k\tilde\fomega_h,\od^k\fmu_h\rangle_{L^2\Lambda^{k+1}}&=\langle\ff,\fmu_h\rangle_{L^2\Lambda^k}&\forall\,\fmu_h\in\fW_h\Lambda^k.
\end{array}
\right.
\end{equation}
Then $\breve\fvartheta_h=\tilde\fvartheta_h,$ and, by Lemma \ref{lem:hfonT},
$$
\|\tilde\fomega_h-\breve\fomega_h\|_{\od^k}+\|\tilde\fsigma_h-\breve\fsigma_h\|_{\od^{k-1}}\leqslant C\sup_{\fmu_h\in \fW_h\Lambda^k}\frac{\langle\ff-\ff_0,\fmu_h\rangle_{L^2\Lambda^k}}{\|\fmu_h\|_{\od^k}}=C\sup_{\fmu_h\in \fW_h\Lambda^k}\frac{\langle\ff,\fmu_h-\mathbf{P}^k_0\fmu_h\rangle_{L^2\Lambda^k}}{\|\fmu_h\|_{\od^k}}\leqslant Ch\|\ff\|_{L^2\Lambda^k}.
$$
Now, by Lemma \ref{lem:classicalmix},
\begin{multline}\label{eq:errorfinal}
\|\tilde\fsigma-\tilde\fsigma_h\|_{\od^{k-1}}+\|\tilde\fomega-\tilde\fomega_h\|_{\od^k}+\|\tilde\fvartheta-\tilde\fvartheta_h\|_{L^2\Lambda^k}
\\
\leqslant C(\inf_{\ftau_h\in\fW_h\Lambda^{k-1}}\|\tilde\fsigma-\ftau_h\|_{\od^{k-1}}+\inf_{\fmu_h\in\fW_h\Lambda^k}\|\tilde\fomega-\fmu_h\|_{\od^k}+\inf_{\fvarsigma_h\in\hf_h\Lambda^k}\|\tilde\fvartheta-\fvarsigma_h\|_{L^2\Lambda^k}+h\|\ff\|_{L^2\Lambda^k}).
\end{multline}
and thus
\begin{multline}
\|\tilde\fsigma-\breve\fsigma_h\|_{\od^{k-1}}+\|\tilde\fomega-\breve\fomega_h\|_{\od^k}+\|\tilde\fvartheta-\breve\fvartheta_h\|_{L^2\Lambda^k}
\\
\leqslant C(\inf_{\ftau_h\in\fW_h\Lambda^{k-1}}\|\tilde\fsigma-\ftau_h\|_{\od^{k-1}}+\inf_{\fmu_h\in\fW_h\Lambda^k}\|\tilde\fomega-\fmu_h\|_{\od^k}+\inf_{\fvarsigma_h\in\hf_h\Lambda^k}\|\tilde\fvartheta-\fvarsigma_h\|_{L^2\Lambda^k}+h\|\ff\|_{L^2\Lambda^k}).
\end{multline}
By standard techniques,
$$
\|\fomega-\bar\fomega_h\|_{L^2\Lambda^k}\leqslant \|\fomega-\breve\fomega_h\|_{L^2\Lambda^k}+Ch\|\ff\|_{L^2\Lambda^k}.
$$
Therefore,
\begin{multline*}
\|\fomega-\bar\fomega_h\|_{L^2\Lambda^k}+\|\od^k_h(\fomega-\bar\fomega_h)\|_{L^2\Lambda^{k+1}}+\|\odelta_{k,h}(\fomega-\bar\fomega_h)\|_{L^2\Lambda^{k-1}}
\\
=\|\fomega-\bar\fomega_h\|_{L^2\Lambda^k}+\|\od^k\fomega-\od^k\breve\fomega_h\|_{L^2\Lambda^{k+1}}+\|\tilde\fsigma-\breve\fsigma_h\|_{L^2\Lambda^{k-1}}\ \ (\mbox{by}\ \eqref{eq:disconnection})
\\
\leqslant \|\fomega-\breve\fomega_h\|_{L^2\Lambda^k}+\|\od^k\fomega-\od^k\breve\fomega_h\|_{L^2\Lambda^{k+1}}+\|\tilde\fsigma-\breve\fsigma_h\|_{L^2\Lambda^{k-1}}+Ch\|\ff\|_{L^2\Lambda^{k}}.
\end{multline*}
And finally,
\begin{multline*}
\|\fomega-\fomega_h\|_{L^2\Lambda^k}+\|\od^k_h(\fomega-\fomega_h)\|_{L^2\Lambda^{k+1}}+\|\odelta_{k,h}(\fomega-\fomega_h)\|_{L^2\Lambda^{k-1}}+\|\fvartheta-\fvartheta_h\|_{L^2\Lambda^k}
\\
\leqslant \|\fomega-\breve\fomega_h\|_{L^2\Lambda^k}+\|\od^k\fomega-\od^k\breve\fomega_h\|_{L^2\Lambda^{k+1}}+\|\tilde\fsigma-\breve\fsigma_h\|_{L^2\Lambda^{k-1}}+\|\fvartheta-\breve\fvartheta_h\|_{L^2\Lambda^k}+Ch\|\ff\|_{L^2\Lambda^{k}}\  (\mbox{by}\ \eqref{eq:nobar-bar})
\\
\leqslant C(\inf_{\ftau_h\in\fW_h\Lambda^{k-1}}\|\tilde\fsigma-\ftau_h\|_{\od^{k-1}}+\inf_{\fmu_h\in\fW_h\Lambda^k}\|\tilde\fomega-\fmu_h\|_{\od^k}+\inf_{\fvarsigma_h\in\hf_h\Lambda^k}\|\tilde\fvartheta-\fvarsigma_h\|_{L^2\Lambda^k}+h\|\ff\|_{L^2\Lambda^k}).
\end{multline*}
The proof is completed. 
\qed

\paragraph{\bf Proof of Lemma \ref{lem:ests-reg}}
For general $\ff$, we would have, by Lemma \ref{lem:classicalmix}, with notations defined in the proof of Lemma \ref{lem:estbyreg},
\begin{equation}
\|\tilde\fsigma-\tilde\fsigma_h\|_{L^2\Lambda^{k-1}}+\|\tilde\fomega-\tilde\fomega_h\|_{\od^k}+\|\tilde\fvartheta-\tilde\fvartheta_h\|_{L^2\Lambda^k}\leqslant Ch^s\|\ff\|_{L^2\Lambda^k}.
\end{equation}
Take this into the place of \eqref{eq:errorfinal}, and \eqref{eq:errests-reg} can be obtained by repeating the proof of Lemma \ref{lem:estbyreg}. We omit the details here. 
\qed


%
%
\subsection{Implementation of the scheme: locally supported basis functions of $\fV_{\od\cap\odelta}^{\rm m, +\odelta}\Lambda^k$}

The finite element space $\fV_{\od\cap\odelta}^{\rm m, +\odelta}\Lambda^k$ does not correspond to a ``finite element" defined as Ciarlet's triple \cite{Ciarlet.P1978book}. Though, in this section, we present a set of basis functions of $\fV_{\od\cap\odelta}^{\rm m, +\odelta}\Lambda^k$ which are tightly supported. Therefore, with the space $\hf_h\Lambda^k$ well studied, the finite element scheme can be implemented by the standard routine. 

A general procedure is given in Section \ref{sec:generalproce}, and, for an illustration of the procedure, a two-dimensional example is given in Section \ref{sec:examples}, where we particularly refer to Figures \ref{fig:basisinteriorvertex} and \ref{fig:boundaryvertex} for the illustration of the local supports of the basis functions.

\subsubsection{A general procedure}
\label{sec:generalproce}

On a simplex $T$, denote 
$$
\pddedmpdeL^k(T):=\left\{\fmu\in\pddempdeL^k(T):\langle\odelta_k\fmu,\ftau\rangle_{L^2\Lambda^{k-1}}-\langle\fmu,\od^{k-1}\ftau\rangle_{L^2\Lambda^k}=0,\ \forall\,\ftau\in \mathcal{P}^-_1\Lambda^{k-1}(T)\right\},
$$
and 
$$
\pddedempdeL^k(T):=\left\{\fmu\in\pddempdeL^k(T):\langle\od^k\fmu,\feta\rangle_{L^2\Lambda^{k+1}}-\langle\fmu,\odelta_{k+1}\feta\rangle_{L^2\Lambda^k}=0,\ \forall\,\feta\in \mathcal{P}^{*,-}_1\Lambda^{k+1}(T)\right\}.
$$
Namely, with $\Lambda^{\ixalpha}=\dx^{\ixalpha_1}\wedge\dots\wedge\dx^{\ixalpha_k}$, 
$$
\pddedmpdeL^k(T)=\okappa_T(\mathcal{P}_0\Lambda^{k+1}(T))\oplus\left\{\Lambda^{\ixalpha}+\sum_{\ixalpha'\in \mathbb{IX}_{k,n}}C_{\ixalpha\ixalpha'}\tilde{\fmu}_{\odelta,T}^{\ixalpha'}:\ixalpha\in\mathbb{IX}_{k,n}\right\},
$$
where $C_{\ixalpha\ixalpha'}$ are chosen such that 
$$
\displaystyle\left\langle\odelta_k\left(\Lambda^{\ixalpha}+\sum_{\ixalpha'\in \mathbb{IX}_{k,n}}C_{\ixalpha\ixalpha'}\tilde{\fmu}_{\odelta,T}^{\ixalpha'}\right),\ftau\right\rangle_{L^2\Lambda^{k-1}(T)}-\left\langle\Lambda^{\ixalpha}+\sum_{\ixalpha'\in \mathbb{IX}_{k,n}}C_{\ixalpha\ixalpha'}\tilde{\fmu}_{\odelta,T}^{\ixalpha'},\od^{k-1}\ftau\right\rangle_{L^2\Lambda^k(T)}=0,\ \forall\,\ftau\in \mathcal{P}^-_1\Lambda^{k-1}(T),
$$
and
$$
\pddedempdeL^k(T)=\star\okappa_T\star(\mathcal{P}_0\Lambda^{k-1}(T))+\mathcal{H}^2_{\odelta}(T)
$$
Then $\pddedmpdeL^k(T)$ is unisolvent with respect to $\langle\od^k\fmu,\feta\rangle_{L^2\Lambda^{k+1}}-\langle\fmu,\odelta_{k+1}\feta\rangle_{L^2\Lambda^k}$ for $\feta\in \mathcal{P}^{*,-}_1\Lambda^{k+1}(T)$, $\pddedempdeL^k(T)$ is unisolvent with respect to $\langle\odelta_k\fmu,\ftau\rangle_{L^2\Lambda^{k-1}}-\langle\fmu,\od^{k-1}\ftau\rangle_{L^2\Lambda^k}=0$ for $\ftau\in \mathcal{P}^-_1\Lambda^{k-1}(T)$. Further, 
$$
\pddempdeL^k(T)=\pddedmpdeL^k(T)\oplus \pddedempdeL^k(T).
$$
Denote $\displaystyle\pddedmpdeL^k(\mathcal{G}_h)=\prod_{T\in\mathcal{G}_h}\pddedmpdeL^k(T)$ and $\displaystyle\pddedempdeL^k(\mathcal{G}_h)=\prod_{T\in\mathcal{G}_h}\pddedempdeL^k(T)$. Then
\begin{multline}
\fV_{\od\cap\odelta}^{\rm m, +\odelta}\Lambda^k=\Big\{\fmu_h\in\pddempdeL^k(\mathcal{G}_h):
\langle\od^k_h\fmu_h,\feta_h\rangle_{L^2\Lambda^{k+1}}-\langle\fmu_h,\odelta_{k+1,h}\feta_h\rangle_{L^2\Lambda^k}=0,\ \forall\,\feta_h\in\fW^{*,\rm abc}_{h0}\Lambda^{k+1},
\\
\mbox{and}\ \ \langle\odelta_{k,h}\fmu_h,\ftau_h\rangle_{L^2\Lambda^{k-1}}-\langle\fmu_h,\od^{k-1}\ftau_h\rangle_{L^2\Lambda^k}=0,\ \forall\,\ftau_h\in\fW_h\Lambda^{k-1}
\Big\}
\\
=\left\{\fmu_h\in \pddedmpdeL^k(\mathcal{G}_h): \langle\od^k_h\fmu_h,\feta_h\rangle_{L^2\Lambda^{k+1}}-\langle\fmu_h,\odelta_{k+1,h}\feta_h\rangle_{L^2\Lambda^k}=0,\ \forall\,\feta_h\in\fW^{*,\rm abc}_{h0}\Lambda^{k+1}\right\}
\\
\oplus 
\left\{\fmu_h\in \pddedempdeL^k(\mathcal{G}_h):\langle\odelta_{k,h}\fmu_h,\ftau_h\rangle_{L^2\Lambda^{k-1}}-\langle\fmu_h,\od^{k-1}\ftau_h\rangle_{L^2\Lambda^k}=0,\ \forall\,\ftau_h\in\fW_h\Lambda^{k-1}\right\}:=\fV_{\od}+\fV_{\odelta}.
\end{multline}

Now we figure out the basis functions of $\fV_{\od}$ and $\fV_{\odelta}$ respectively. Their combination is the set of basis functions of $\fV_{\od\cap\odelta}^{\rm m, +\odelta}\Lambda^k$. 

\paragraph{\bf Basis functions of $\fV_{\od}$} Note that 
\begin{multline*}
\fV_{\od}={\Bigg\{}\fmu_h\in \prod_{T\in\mathcal{G}_h}\left[\mathcal{P}_0\Lambda^k(T)+\okappa_T(\mathcal{P}_0\Lambda^{k+1}(T))+\mathcal{H}^2_{\odelta}(T)\right]:
\\
\langle\od^k_h\fmu_h,\feta_h\rangle_{L^2\Lambda^{k+1}}-\langle\fmu_h,\odelta_{k+1,h}\feta_h\rangle_{L^2\Lambda^k}=0,\ \forall\,\feta_h\in\fW^{*,\rm abc}_{h0}\Lambda^{k+1}
\\
\langle\odelta_k\fmu_h,\ftau\rangle_{L^2\Lambda^{k-1}(T)}-\langle\fmu_h,\od^{k-1}\ftau\rangle_{L^2\Lambda^k(T)}=0,\ \forall\,\ftau\in \mathcal{P}^-_1\Lambda^{k-1}(T),\ \forall\,T\in\mathcal{G}_h
\Bigg\}
\\
=\left\{\fmu_h\in \fW_h\Lambda^k+\prod_{T\in\mathcal{G}_h}\mathcal{H}^2_{\odelta}(T):\langle\odelta_k\fmu_h,\ftau\rangle_{L^2\Lambda^{k-1}(T)}-\langle\fmu_h,\od^{k-1}\ftau\rangle_{L^2\Lambda^k(T)}=0,\ \forall\,\ftau\in \mathcal{P}^-_1\Lambda^{k-1}(T),\ \forall\,T\in\mathcal{G}_h\right\}.
\end{multline*}
On any simplex $T$, given $\fmu\in \mathcal{P}_0\Lambda^k(T)+\okappa_T(\mathcal{P}_0\Lambda^{k+1}(T))$, there is always a unique $\fmu'\in \mathcal{H}^2_{\odelta}(T)$, such that $\langle\odelta_k(\fmu+\fmu'),\ftau\rangle_{L^2\Lambda^{k-1}(T)}-\langle\fmu+\fmu',\od^{k-1}\ftau\rangle_{L^2\Lambda^k(T)}=0$, for $\forall\,\ftau\in \mathcal{P}^-_1\Lambda^{k-1}(T)$. Therefore, there is a bijection between $\fV_{\od}$ and $\fW_h\Lambda^k$. This way, the basis functions of $\fV_{\od}$ are determined by this 2-step procedure:
\begin{enumerate}
\item find $\mathbf{B}_{\bf W}^k$ a set of linearly independent basis functions of $\fW_h\Lambda^k$;
\item for every $\fpsi_{\bf W}\in \mathbf{B}_{\bf W}^k$, choose $\tilde\fmu_\psi\in \prod_{T\in\mathcal{G}_h}\mathcal{H}_{\odelta}^2(T)$, such that
$$
\langle\odelta_k\fpsi_{\bf W}+\tilde\fmu_\psi,\ftau\rangle_{L^2\Lambda^{k-1}(T)}-\langle\fpsi_{\bf W}+\tilde\fmu_\psi,\od^{k-1}\ftau\rangle_{L^2\Lambda^k(T)}=0,\ \forall\,\ftau\in \mathcal{P}^-_1\Lambda^{k-1}(T),\ \forall\,T\in\mathcal{G}_h,
$$
and set $\fpsi_{\fV}:=\fpsi_{\bf W}+\tilde\fmu_\psi$.
\end{enumerate}
Then $\{\fpsi_{\fV}\}_{\fpsi_{\bf W}\in\mathbf{B}_{\bf W}^k}$ is a set of basis functions of $\fV_{\od}$. Evidently, the support of $\fpsi_{\fV}$ is contained in the support of $\fpsi_{\bf W}$. 

\paragraph{\bf Basis functions of $\fV_{\odelta}$} To determine the basis functions of $\fV_{\odelta}$, we adopt a different 3-step procedure. 
\begin{description}
\item[Step 1] find $\mathbf{B}_{\bf W}^{k-1}=\{\fpsi_j\}_{j=1}^{\dim(\fW_h\Lambda^{k-1})}$ a set of nodal basis functions of $\fW_h\Lambda^{k-1}$, and on every simplex $T$, the restrictions $\fpsi_j|_T$ of those $\fpsi_j$ that are nonzero on $T$ are linearly independent;

\item[Step 2]
given $T\in\mathcal{G}_h$, set $I^T:=\left\{1\leqslant i\leqslant \dim(\fW_h\Lambda^{k-1}):\mathring{T}\cap {\rm supp}(\fpsi_i)\neq\emptyset\right\}$, and there exist a set of functions $\left\{\fmu^T_i:i\in I^T\right\}\subset \pddedempdeL^k(T)$, such that $\langle\odelta_k\fmu^T_i,\fpsi_j|_T\rangle_{L^2\Lambda^{k-1}(T)}-\langle\fmu^T_i,\od^{k-1}\fpsi_j|_T\rangle_{L^2\Lambda^k(T)}=\delta_{ij}$, $i,j\in I^T$. Then $\pddedempdeL^k(T)={\rm span}\{\fmu^T_i:\ i\in I^T\}$.

\item[Step 3] a set of basis functions of $\fV_{\odelta}$ consists of, for $1\leqslant j\leqslant \dim(\fW_h\Lambda^{k-1})$, functions 
$$
\fmu_h\in\sum_{\mathring{T}\cap{\rm supp}(\fpsi_j)\neq\emptyset}{\rm span}\{E_T^\Omega\fmu_j^T\},\ \ \mbox{such\ that}\ \ \langle\odelta_{k,h}\fmu_h,\fpsi_j\rangle_{L^2\Lambda^{k-1}}-\langle\fmu_h,\od^{k-1}\fpsi_j\rangle_{L^2\Lambda^k}=0.
$$
\end{description}
Actually,
\begin{multline*}
\fV_{\odelta}=\left\{\fmu_h\in \pddedempdeL^k(\mathcal{G}_h):\langle\odelta_{k,h}\fmu_h,\ftau_h\rangle_{L^2\Lambda^{k-1}}-\langle\fmu_h,\od^{k-1}\ftau_h\rangle_{L^2\Lambda^k}=0,\ \forall\,\ftau_h\in\fW_h\Lambda^{k-1}\right\}
\\
=\left\{\fmu_h\in \sum_{T\in\mathcal{G}_h}\sum_{i\in I^T}{\rm span}\{E_T^\Omega\fmu_i^T\}:\langle\odelta_{k,h}\fmu_h,\fpsi_j\rangle_{L^2\Lambda^{k-1}}-\langle\fmu_h,\od^{k-1}\fpsi_j\rangle_{L^2\Lambda^k}=0,\ \forall\,\fpsi_j\in \mathbf{B}_{\bf W}^{k-1}\right\}
\\
=\sum_{1\leqslant j\leqslant \dim(\fW_h\Lambda^{k-1})}\left\{\fmu_h\in\sum_{\mathring{T}\cap{\rm supp}(\fpsi_j)\neq\emptyset}{\rm span}\{E_T^\Omega\fmu_j^T\}:\langle\odelta_{k,h}\fmu_h,\fpsi_j\rangle_{L^2\Lambda^{k-1}}-\langle\fmu_h,\od^{k-1}\fpsi_j\rangle_{L^2\Lambda^k}=0\right\}.
\end{multline*}
Note that, again, the support of such functions are contained in the support of $\fpsi_j$.

\subsubsection{Examples}
\label{sec:examples}
We take the two-dimensional Hodge-Laplacian problem of 1-form for example. Let $\Omega$ be a polygon. Denote by $\mathcal{H}(\Omega)$ the space of harmonic forms. The problem reads: find $\fomega\in H(\rot,\Omega)\cap H_0(\dv,\Omega)$, such that $\fomega\perp \mathcal{H}(\Omega)$, and
\begin{equation}
(\rot\fomega,\rot\fmu)+(\dv\fomega,\dv\fmu)=(\ff-\mathbf{P}_{\mathcal{H}}\ff,\fmu),\ \forall\,\fmu\in H(\rot,\Omega)\cap H_0(\dv,\Omega). 
\end{equation}
The corresponding spaces are $H^1(\Omega)=H({\rm grad},\Omega)$ for 0-forms and $H^1_0(\Omega)=H_0(\curl,\Omega)$ for 2-forms, respectively. We use the conforming linear element space $V^1_h$ for $H({\rm grad},\Omega)$ and the linear Crouzeix-Raviart element space $V^{\rm CR}_{h0}$ for $H_0(\curl,\Omega)=H^1_0(\Omega)$.

Let $\mathcal{T}_h$ be a shape-regular triangular subdivision of $\Omega$ with mesh size $h$, such that $\overline\Omega=\cup_{T\in\mathcal{T}_h}\overline T$, and every boundary vertex is connected to at least one interior vertex. Denote by $\mathcal{E}_h$, $\mathcal{E}_h^i$, $\mathcal{E}_h^b$, $\mathcal{X}_h$, $\mathcal{X}_h^i$, $\mathcal{X}_h^b$ and $\mathcal{X}_h^c$ the set of edges, interior edges, boundary edges, vertices, interior vertices, boundary vertices and corners, respectively. Evidently, $V^1_h$ admits locally supported basis functions, denoted by $\phi_a$ associated with vertices $a\in\mathcal{X}_h$. The restrictions of $\phi_a$ on a triangle are each one of the barycentric coordinates on the triangle. We refer to Figure \ref{fig:vertices} for an illustration of the triangulation, and also the supports of $\phi_a$.

In the setting, 
$$
\pddempdeL^1(T)=\mathbf{P}_{\rot\cap\dv}^{\rm m+\dv}(T)={\rm span}\left\{\left(\begin{array}{c}1\\ 0\end{array}\right),\ \left(\begin{array}{c}0\\ 1\end{array}\right),\ \left(\begin{array}{c}\tilde{x}\\ \tilde{y}\end{array}\right),\ \left(\begin{array}{c}\tilde{y}\\ -\tilde{x}\end{array}\right),\ \left(\begin{array}{c}\tilde{x}^2\\ 0\end{array}\right),\ \left(\begin{array}{c}0\\ \tilde{y}^2\end{array}\right)\right\},
$$
and
$$
\mathbf{P}_{\od\cap\odelta,\odelta}^{\rm m+\odelta}\Lambda^1(T)=\mathbf{P}_{\rot\cap\dv,\dv}^{\rm m+\dv}(T)={\rm span}\left\{\left(\begin{array}{c}\tilde{x}\\ \tilde{y}\end{array}\right),\ \left(\begin{array}{c}\tilde{x}^2\\ 0\end{array}\right),\ \left(\begin{array}{c}0\\ \tilde{y}^2\end{array}\right)\right\}.
$$

\begin{figure}[htbp]
\begin{tikzpicture}[scale=1.1]

\path 	coordinate (a0b0) at (0,0)
coordinate (a0b1) at (0,1)
coordinate (a0b2) at (0,2)
coordinate (a0b3) at (0,3)
coordinate (a0b4) at (0,4)
coordinate (a1b0) at (1,0)
coordinate (a1b1) at (1,1)
coordinate (a1b2) at (1,2)
coordinate (a1b3) at (1,3)
coordinate (a1b4) at (1,4)
coordinate (a2b0) at (2,0)
coordinate (a2b1) at (2,1)
coordinate (a2b2) at (2,2)
coordinate (a2b3) at (2,3)
coordinate (a2b4) at (2,4)
coordinate (a3b0) at (3,0)
coordinate (a3b1) at (3,1)
coordinate (a3b2) at (3,2)
coordinate (a3b3) at (3,3)
coordinate (a3b4) at (3,4)
coordinate (a4b0) at (4,0)
coordinate (a4b1) at (4,1)
coordinate (a4b2) at (4,2)
coordinate (a4b3) at (4,3)
coordinate (a4b4) at (4,4);

\draw[line width=.4pt]  (a0b0) -- (a0b4) ;
\draw[line width=.4pt]  (a1b0) -- (a1b4) ;
\draw[line width=.4pt]  (a2b0) -- (a2b4) ;
\draw[line width=.4pt]  (a3b0) -- (a3b4) ;
\draw[line width=.4pt]  (a4b0) -- (a4b4) ;
\draw[line width=.4pt]  (a0b0) -- (a4b0) ;
\draw[line width=.4pt]  (a0b1) -- (a4b1) ;
\draw[line width=.4pt]  (a0b2) -- (a4b2) ;
\draw[line width=.4pt]  (a0b3) -- (a4b3) ;
\draw[line width=.4pt]  (a0b4) -- (a4b4) ;
\draw[line width=.4pt]  (a0b0) -- (a4b0) ;

\draw[line width=.4pt]  (a0b3) -- (a1b4) ;
\draw[line width=.4pt]  (a0b2) -- (a2b4) ;
\draw[line width=.4pt]  (a0b1) -- (a3b4) ;
\draw[line width=.4pt]  (a1b0) -- (a4b3) ;
\draw[line width=.4pt]  (a2b0) -- (a4b2) ;
\draw[line width=.4pt]  (a3b0) -- (a4b1) ;
\draw[line width=.4pt]  (a0b0) -- (a4b4) ;

\draw[fill] (a3b2) circle [radius=0.05];
\node[below right] at (a3b2) {$A$};
\draw[fill] (a2b1) circle [radius=0.05];
\node[below right] at (a2b1) {$B$};
\draw[fill] (a1b0) circle [radius=0.05];
\node[below] at (a1b0) {$C$};
 
\begin{scope}
\fill[pattern=horizontal lines] (a2b1)--(a3b1)--(a4b2)--(a4b3)--(a3b3)--(a2b2)--(a2b1);
\fill[pattern=vertical lines] (a1b0)--(a2b0)--(a3b1)--(a3b2)--(a2b2)--(a1b1)--(a1b0);
\fill[pattern=north west lines] (a0b0)--(a2b0)--(a2b1)--(a1b1)--(a0b0);
\end{scope}


\path 	coordinate (ca0b0) at (5,0)
coordinate (ca1b0) at (6,0)
coordinate (ca2b0) at (7,0)
coordinate (ca2b1) at (7,1)
coordinate (ca1b1) at (6,1);

\draw[line width=.4pt]  (ca2b1) -- (ca2b0) -- (ca1b0) -- (ca0b0) -- (ca1b1) -- (ca2b1) -- (ca1b0) -- (ca1b1);


\path 	coordinate (ba1b0) at (8,0)
coordinate (ba2b0) at (9,0)
coordinate (ba3b1) at (10,1)
coordinate (ba3b2) at (10,2)
coordinate (ba2b2) at (9,2)
coordinate (ba1b1) at (8,1)
coordinate (ba2b1) at (9,1);

\draw[line width=.4pt]  (ba1b1) -- (ba2b2);
\draw[line width=.4pt]  (ba1b0) -- (ba3b2);
\draw[line width=.4pt]  (ba2b0) -- (ba3b1);
\draw[line width=.4pt]  (ba2b2) -- (ba3b2);
\draw[line width=.4pt]  (ba1b1) -- (ba3b1);
\draw[line width=.4pt]  (ba1b0) -- (ba2b0);
\draw[line width=.4pt]  (ba1b0) -- (ba1b1);
\draw[line width=.4pt]  (ba2b0) -- (ba2b2);
\draw[line width=.4pt]  (ba3b1) -- (ba3b2);


\path 	coordinate (aa2b1) at (11,1)
coordinate (aa3b1) at (12,1)
coordinate (aa4b2) at (13,2)
coordinate (aa4b3) at (13,3)
coordinate (aa3b3) at (12,3)
coordinate (aa2b2) at (11,2)
coordinate (aa3b2) at (12,2);

\draw[line width=.4pt]  (aa2b2) -- (aa3b3);
\draw[line width=.4pt]  (aa2b1) -- (aa4b3);
\draw[line width=.4pt]  (aa3b1) -- (aa4b2);
\draw[line width=.4pt]  (aa3b3) -- (aa4b3);
\draw[line width=.4pt]  (aa2b2) -- (aa4b2);
\draw[line width=.4pt]  (aa2b1) -- (aa3b1);
\draw[line width=.4pt]  (aa2b1) -- (aa2b2);
\draw[line width=.4pt]  (aa3b1) -- (aa3b3);
\draw[line width=.4pt]  (aa4b2) -- (aa4b3);

\begin{scope}
\fill[pattern=horizontal lines] (aa2b1)--(aa3b1)--(aa4b2)--(aa4b3)--(aa3b3)--(aa2b2)--(aa2b1);
\fill[pattern=vertical lines] (ba1b0)--(ba2b0)--(ba3b1)--(ba3b2)--(ba2b2)--(ba1b1)--(ba1b0);
\fill[pattern=north west lines] (ca0b0)--(ca2b0)--(ca2b1)--(ca1b1)--(ca0b0);
\end{scope}

\draw[fill] (aa3b2) circle [radius=0.05];
\node[below right] at (aa3b2) {$A$};
\draw[fill] (ba2b1) circle [radius=0.05];
\node[below right] at (ba2b1) {$B$};
\draw[fill] (ca1b0) circle [radius=0.05];
\node[below] at (ca1b0) {$C$};

\end{tikzpicture}
\caption{Illustration of supports of $\phi_A$, $\phi_B$ and $\phi_C$. $A$ (as well as $B$) denotes an interior vertex, and $C$ denotes a boundary vertex.}\label{fig:vertices}
\end{figure}
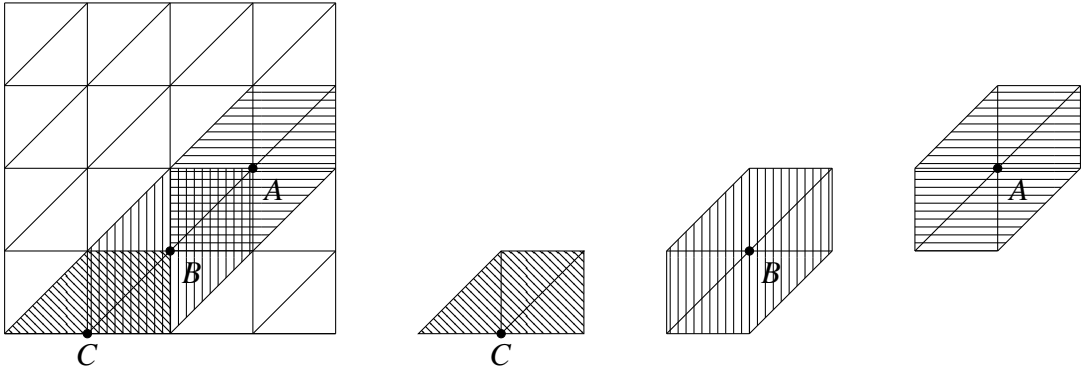

The finite element space is defined by
\begin{multline}\label{eq:defspaceintwod}
\fV_{\od\cap\odelta}^{\rm m, +\odelta}=\fV_{\rot\cap \dv}^{\rm m,+\dv}:=\Big\{\fmu_h\in\mathbf{P}_{\rot\cap\dv}^{\rm m+\dv}(\mathcal{T}_h): (\rot_h\fmu_h,\feta_h)-(\fmu_h,\curl_h\feta_h)=0,\forall\,\feta_h\in V^{\rm CR}_{h0},
\\
\mbox{and}\ \ 
(\dv_h\fmu_h,\ftau_h)-(\fmu_h,{\rm grad}\ftau_h)=0,\forall\,\ftau_h\in V^1_h. 
\Big\}
\end{multline}

Now, following the general procedure, we present the basis functions of $\fV_{\od}=\fV_{\rot}$ and $\fV_{\odelta}=\fV_{\dv}$, respectively. The a set of basis functions of $\fV_{\rot\cap \dv}^{\rm m,+\dv}$ is a direct summation of the set of basis functions of $\fV_{\rot}$ and of $\fV_{\dv}$. 

\paragraph{\bf Basis functions of $\fV_{\rot}$} The basis functions are determined by 2 steps. 
\begin{description}
\item[Step 1] Choose $\{\utau{}_i\}_{i=1}^{\dim(V^{\rm rot}_h)}$ to be a set of nodal basis functions of $V^{\rm rot}_h$, the lowest degree conforming Raviart-Thomas finite element space for $H(\rot,\Omega)$.
\item[Step 2] For any $\utau{}_i$, on every $T\in \mathcal{T}_h$, find $\tilde\fmu_i^T\in {\rm span}\left\{\left(\begin{array}{c}\tilde{x}^2\\ 0\end{array}\right),\ \left(\begin{array}{c}0\\ \tilde{y}^2\end{array}\right)\right\}$, such that 
$$
(\dv(\utau{}_i|_T+\tilde\fmu_i^T),\lambda_j)_T+((\utau{}_i|_T+\tilde\fmu_i^T),\nabla\lambda_j)_T=0,\ \ j=1,2,3. 
$$
Set $\tilde{\utau}{}_i:=\utau{}_i+\prod_{T\in\mathcal{T}_h}\tilde\fmu_i^T$. 
\end{description}
Then $\{\tilde{\utau}{}_i\}_{i=1}^{\dim(V^{\rm rot}_h)}$ is a set of basis functions of $\fV_{\rot}$. The support of $\tilde{\utau}{}_i$ is the same as that of $\utau{}_i$. 

\paragraph{\bf Basis functions of $\fV_{\dv}$} The basis functions are determined by 2 steps. 
\begin{description}
\item[Step 1] On a cell $T$, with $a_i\in\mathcal{X}_h$ being its vertices, let $\lambda^{a_i}_T$ be the barycentric coordinates of $T$, and find $\fmu^{\dv}_{a_i,T}\in \mathbf{P}_{\rot\cap\dv,\dv}^{\rm m+\dv}(T)$, $1\leqslant i\leqslant 3$, such that 
$$
(\dv\fmu^{\dv}_{a_i,T},\lambda_T^{a_j})_T+(\fmu^{\dv}_{a_i,T},{\rm grad}\lambda_T^{a_j})_T=\delta_{ij}, \ \ 1\leqslant i,j\leqslant 3.
$$
\item[Step 2] Find functions in $\prod_{T\in\mathcal{T}_h}{\rm span}\{\fmu^{\dv}_{a,T}:a\in\mathcal{X}_h\}$ such that conditions in \eqref{eq:defspaceintwod} are satisfied with respect to every $\phi_a\in V^1_h$ a basis function. Particularly, with respect to any vertex $a$, the associated basis functions of $\fV_{\dv}$ are all these functions in ${\rm span}\{\fmu^{\dv}_{a,T}:\mathring{T}\cap {\rm supp}(\phi_a)\neq\emptyset\}$, namely, functions of the form $\fomega_h=\sum_{\partial T\ni a}c_TE_T^\Omega\fmu^{\dv}_{a,T}$, such that 
\begin{equation}
\sum_{\partial T\ni a}(\dv\fomega_h,\phi_a|_T)_T+(\fomega_h,\nabla\phi_a|_T)_T=0.
\end{equation}
\end{description}
Those $\fomega_h$ for all $\phi_a$ form a set of basis functions of $\fV_{\dv}$. They are each supported in a two-successive-cell patch. We refer to Figure \ref{fig:basisinteriorvertex} for the case $a\in\mathcal{X}_h^i$, and to Figure \ref{fig:boundaryvertex} for an illustration that $a\in\mathcal{X}_h^b$.

\begin{figure}
\begin{tikzpicture}[scale=1.3]
\path 	
coordinate (a1mb0) at (0.9,0)
coordinate (a1pb0m) at (1.1,-0.1)
coordinate (a1pb0p) at (1.1,0.1)
coordinate (a1mb0m) at (0.9,-0.1)
coordinate (a1mb0p) at (0.9,0.1)
coordinate (a2pb0p) at (2.1,0.1)
coordinate (a2pb0m) at (2.1,-0.1)
coordinate (a2mb0p) at (1.9,0.1)
coordinate (a2mb0m) at (1.9,-0.1)
coordinate (a1b1m) at (1,0.9)
coordinate (a1mb1) at (0.9,1)
coordinate (a1pb1p) at (1.1,1.1)
coordinate (a1pb1m) at (1.1,0.9)
coordinate (a1mb1p) at (0.9,1.1)
coordinate (a1mb1m) at (0.9,0.9)
coordinate (a2b1m) at (2,0.9)
coordinate (a2pb1p) at (2.1,1.1)
coordinate (a2pb1m) at (2.1,0.9)
coordinate (a2mb1p) at (1.9,1.1)
coordinate (a2mb1m) at (1.9,0.9)
coordinate (a3pb1p) at (3.1,1.1)
coordinate (a3pb1m) at (3.1,0.9)
coordinate (a3mb1p) at (2.9,1.1)
coordinate (a3mb1m) at (2.9,0.9)
coordinate (a1mb2) at (0.9,2)
coordinate (a1pb2p) at (1.1,2.1)
coordinate (a1pb2m) at (1.1,1.9)
coordinate (a1mb2p) at (0.9,2.1)
coordinate (a1mb2m) at (0.9,1.9)
coordinate (a2pb2) at (2.1,2)
coordinate (a2b2p) at (2,2.1)
coordinate (a2b2m) at (2,1.9)
coordinate (a2mb2) at (1.9,2)
coordinate (a2pb2p) at (2.1,2.1)
coordinate (a2pb2m) at (2.1,1.9)
coordinate (a2mb2p) at (1.9,2.1)
coordinate (a2mb2m) at (1.9,1.9)
coordinate (a3pb2) at (3.1,2)
coordinate (a3pb2p) at (3.1,2.1)
coordinate (a3pb2m) at (3.1,1.9)
coordinate (a3mb2p) at (2.9,2.1)
coordinate (a3mb2m) at (2.9,1.9)
coordinate (a2b3p) at (2,3.1)
coordinate (a2pb3p) at (2.1,3.1)
coordinate (a2pb3m) at (2.1,2.9)
coordinate (a2mb3p) at (1.9,3.1)
coordinate (a2mb3m) at (1.9,2.9)
coordinate (a3pb3) at (3.1,3)
coordinate (a3b3p) at (3,3.1)
coordinate (a3pb3p) at (3.1,3.1)
coordinate (a3pb3m) at (3.1,2.9)
coordinate (a3mb3p) at (2.9,3.1)
coordinate (a3mb3m) at (2.9,2.9);

\draw[line width=.4pt]  (a1mb1) -- (a1mb2) -- (a2mb2) -- cycle;
\draw[line width=.4pt]  (a1b1m) -- (a2b2m) -- (a2b1m) -- cycle;
\draw[line width=.4pt]  (a1mb2p) -- (a2mb2p) -- (a2mb3p) -- cycle;
\draw[line width=.4pt]  (a2b2p) -- (a2b3p) -- (a3b3p) -- cycle;
\draw[line width=.4pt]  (a2pb2) -- (a3pb2) -- (a3pb3) -- cycle;
\draw[line width=.4pt]  (a2pb2m) -- (a3pb2m) -- (a2pb1m) -- cycle;

\node at(1.5,2.4){$T_1$};

\node at(1,2.9){$\fmu^{\dv}_{A,T_1}$};

\node at(2.3,2.9) {$T_6$};

\node at(2.4,3.4){$\fmu^{\dv}_{A,T_2}$};

\node at(2.9,2.3) {$T_5$};
\node at(3.5,2.4){$\fmu^{\dv}_{A,T_5}$};

\node at(2.4,1.5){$T_4$};
\node at(2.8,1.1){$\fmu^{\dv}_{A,T_4}$};

\node at(1.7,1.1){$T_3$};
\node at(1.7,0.6){$\fmu^{\dv}_{A,T_3}$};

\node at(1.1,1.7){$T_2$};
\node at(0.5,1.6){$\fmu^{\dv}_{A,T_2}$};

\node at(4.25,2){\large$\Longrightarrow$};

\draw[fill] (2,2) circle [radius=0.05];
\node[below right] at (2,2) {$A$};


\path 	coordinate (aa2b1) at (5,1)
coordinate (aa3b1) at (6,1)
coordinate (aa4b2) at (7,2)
coordinate (aa4b3) at (7,3)
coordinate (aa3b3) at (6,3)
coordinate (aa2b2) at (5,2)
coordinate (aa3b2) at (6,2);

\draw[line width=.4pt]  (aa2b2) -- (aa3b3);
\draw[line width=.4pt]  (aa2b1) -- (aa4b3);
\draw[line width=.4pt]  (aa3b1) -- (aa4b2);
\draw[line width=.4pt]  (aa3b3) -- (aa4b3);
\draw[line width=.4pt]  (aa2b2) -- (aa4b2);
\draw[line width=.4pt]  (aa2b1) -- (aa3b1);
\draw[line width=.4pt]  (aa2b1) -- (aa2b2);
\draw[line width=.4pt]  (aa3b1) -- (aa3b3);
\draw[line width=.4pt]  (aa4b2) -- (aa4b3);

\path 	coordinate (ba2b1) at (9,1)
coordinate (ba3b1) at (10,1)
coordinate (ba4b2) at (11,2)
coordinate (ba4b3) at (11,3)
coordinate (ba3b3) at (10,3)
coordinate (ba2b2) at (9,2)
coordinate (ba3b2) at (10,2);

\draw[line width=.4pt]  (ba2b2) -- (ba3b3);
\draw[line width=.4pt]  (ba2b1) -- (ba4b3);
\draw[line width=.4pt]  (ba3b1) -- (ba4b2);
\draw[line width=.4pt]  (ba3b3) -- (ba4b3);
\draw[line width=.4pt]  (ba2b2) -- (ba4b2);
\draw[line width=.4pt]  (ba2b1) -- (ba3b1);
\draw[line width=.4pt]  (ba2b1) -- (ba2b2);
\draw[line width=.4pt]  (ba3b1) -- (ba3b3);
\draw[line width=.4pt]  (ba4b2) -- (ba4b3);


\path 	coordinate (ca2b1) at (1,-2)
coordinate (ca3b1) at (2,-2)
coordinate (ca4b2) at (3,-1)
coordinate (ca4b3) at (3,0)
coordinate (ca3b3) at (2,0)
coordinate (ca2b2) at (1,-1)
coordinate (ca3b2) at (2,-1);

\draw[line width=.4pt]  (ca2b2) -- (ca3b3);
\draw[line width=.4pt]  (ca2b1) -- (ca4b3);
\draw[line width=.4pt]  (ca3b1) -- (ca4b2);
\draw[line width=.4pt]  (ca3b3) -- (ca4b3);
\draw[line width=.4pt]  (ca2b2) -- (ca4b2);
\draw[line width=.4pt]  (ca2b1) -- (ca3b1);
\draw[line width=.4pt]  (ca2b1) -- (ca2b2);
\draw[line width=.4pt]  (ca3b1) -- (ca3b3);
\draw[line width=.4pt]  (ca4b2) -- (ca4b3);

\path 	coordinate (da2b1) at (5,-2)
coordinate (da3b1) at (6,-2)
coordinate (da4b2) at (7,-1)
coordinate (da4b3) at (7,0)
coordinate (da3b3) at (6,0)
coordinate (da2b2) at (5,-1)
coordinate (da3b2) at (6,-1);

\draw[line width=.4pt]  (da2b2) -- (da3b3);
\draw[line width=.4pt]  (da2b1) -- (da4b3);
\draw[line width=.4pt]  (da3b1) -- (da4b2);
\draw[line width=.4pt]  (da3b3) -- (da4b3);
\draw[line width=.4pt]  (da2b2) -- (da4b2);
\draw[line width=.4pt]  (da2b1) -- (da3b1);
\draw[line width=.4pt]  (da2b1) -- (da2b2);
\draw[line width=.4pt]  (da3b1) -- (da3b3);
\draw[line width=.4pt]  (da4b2) -- (da4b3);

\path 	coordinate (ea2b1) at (9,-2)
coordinate (ea3b1) at (10,-2)
coordinate (ea4b2) at (11,-1)
coordinate (ea4b3) at (11,0)
coordinate (ea3b3) at (10,0)
coordinate (ea2b2) at (9,-1)
coordinate (ea3b2) at (10,-1);

\draw[line width=.4pt]  (ea2b2) -- (ea3b3);
\draw[line width=.4pt]  (ea2b1) -- (ea4b3);
\draw[line width=.4pt]  (ea3b1) -- (ea4b2);
\draw[line width=.4pt]  (ea3b3) -- (ea4b3);
\draw[line width=.4pt]  (ea2b2) -- (ea4b2);
\draw[line width=.4pt]  (ea2b1) -- (ea3b1);
\draw[line width=.4pt]  (ea2b1) -- (ea2b2);
\draw[line width=.4pt]  (ea3b1) -- (ea3b3);
\draw[line width=.4pt]  (ea4b2) -- (ea4b3);

\draw[fill] (aa3b2) circle [radius=0.05];
\node[below right] at (aa3b2) {$A$};
\draw[fill] (ba3b2) circle [radius=0.05];
\node[below right] at (ba3b2) {$A$};
\draw[fill] (ca3b2) circle [radius=0.05];
\node[above left] at (ca3b2) {$A$};
\draw[fill] (da3b2) circle [radius=0.05];
\node[above left] at (da3b2) {$A$};
\draw[fill] (ea3b2) circle [radius=0.05];
\node[below right] at (ea3b2) {$A$};

\begin{scope}
\fill[pattern=horizontal lines] (aa2b1)--(aa3b2)--(aa3b3)--(aa2b2)--(aa2b1);
\fill[pattern=horizontal lines] (ba2b1)--(ba3b1)--(ba3b2)--(ba2b2)--(ba2b1);
\fill[pattern=horizontal lines] (ca2b1)--(ca3b1)--(ca4b2)--(ca3b2)--(ca2b1);
\fill[pattern=horizontal lines] (da3b1)--(da4b2)--(da4b3)--(da3b2)--(da3b1);
\fill[pattern=horizontal lines] (ea3b2)--(ea4b2)--(ea4b3)--(ea3b3)--(ea3b2);
\end{scope}

\end{tikzpicture}

\caption{$A$ is an interior vertex; cf. Figure \ref{fig:vertices}. As $A$ is shared by six triangles, five basis functions are associated with the interior vertex $A$. The shadowed parts are respectively the supports of the basis functions.}\label{fig:basisinteriorvertex}
\end{figure}

\begin{figure}[htbp]
\begin{tikzpicture}[scale=1.5]

\path 	coordinate (ca0mb0) at (0.9,0)
coordinate (ca1mb0) at (1.9,0)
coordinate (ca1mb1) at (1.9,1)

coordinate (ca1b0p) at (2,0.1)
coordinate (ca1b1p) at (2,1.1)
coordinate (ca2b1p) at (3,1.1)

coordinate (ca1pb0) at (2.1,0)
coordinate (ca2pb0) at (3.1,0)
coordinate (ca2pb1) at (3.1,1);

\draw[line width=.4pt]  (ca0mb0) -- (ca1mb0) -- (ca1mb1) --cycle;
\draw[line width=.4pt]  (ca1b0p) -- (ca1b1p) -- (ca2b1p) --cycle;
\draw[line width=.4pt]  (ca1pb0) -- (ca2pb0) -- (ca2pb1) --cycle;

\node at(1.6,0.4){$T_1$};
\node at(1.2,0.8){$\fmu^{\dv}_{C,T_1}$};

\node at(2.3,0.7){$T_2$};
\node at(2.4,1.3){$\fmu^{\dv}_{C,T_2}$};

\node at(2.8,0.4){$T_3$};
\node at(3.45,0.5){$\fmu^{\dv}_{C,T_3}$};

\node at (4.2,0.5) {\large$\Longrightarrow$};

\draw[fill] (2,0) circle [radius=0.05];
\node[below] at (2,0) {$C$};

\path 	coordinate (ca0b0) at (5,0)
coordinate (ca1b0) at (6,0)
coordinate (ca2b0) at (7,0)
coordinate (ca2b1) at (7,1)
coordinate (ca1b1) at (6,1);

\draw[line width=.4pt]  (ca2b1) -- (ca2b0) -- (ca1b0) -- (ca0b0) -- (ca1b1) -- (ca2b1) -- (ca1b0) -- (ca1b1);

\path 	coordinate (cra0b0) at (8,0)
coordinate (cra1b0) at (9,0)
coordinate (cra2b0) at (10,0)
coordinate (cra2b1) at (10,1)
coordinate (cra1b1) at (9,1);

\draw[line width=.4pt]  (cra2b1) -- (cra2b0) -- (cra1b0) -- (cra0b0) -- (cra1b1) -- (cra2b1) -- (cra1b0) -- (cra1b1);

\begin{scope}
\fill[pattern=north west lines] (ca0b0)--(ca1b0)--(ca2b1)--(ca1b1)--(ca0b0);
\fill[pattern=north west lines] (cra1b0)--(cra2b0)--(cra2b1)--(cra1b1)--(cra1b0);
\end{scope}

\draw[fill] (ca1b0) circle [radius=0.05];
\node[below] at (ca1b0) {$C$};
\draw[fill] (cra1b0) circle [radius=0.05];
\node[below] at (cra1b0) {$C$};


\end{tikzpicture}

\caption{ C is boundary vertex with a three-cell patch; cf. Figure \ref{fig:vertices}. Two basis functions are associated with the interior vertex $C$. They are each supported on the shadowed parts.}\label{fig:boundaryvertex}
\end{figure}


\begin{thebibliography}{10}

\bibitem{Arnold.D2018feec}
Douglas Arnold.
\newblock {\em Finite element exterior calculus}.
\newblock SIAM, 2018.

\bibitem{Arnold.D;Brezzi.F1985}
Douglas Arnold and Franco Brezzi.
\newblock Mixed and nonconforming finite element methods: implementation, postprocessing and error estimates.
\newblock {\em RAIRO-Mod{\'e}lisation Math{\'e}matique et Analyse
  Num{\'e}rique}, 19(1):7--32, 1985.

\bibitem{Arnold.D;Falk.R;Winther.R2006acta}
Douglas Arnold, Richard Falk, and Ragnar Winther.
\newblock Finite element exterior calculus, homological techniques, and applications.
\newblock {\em Acta Numerica}, 15:1--155, 2006.

\bibitem{Arnold.D;Falk.R;Winther.R2010bams}
Douglas Arnold, Richard Falk, and Ragnar Winther.
\newblock Finite element exterior calculus: from Hodge theory to numerical stability.
\newblock {\em Bulletin of the American Mathematical Society}, 47(2):281--354, 2010.

\bibitem{Barker.M2022thesis}
Mary Barker.
\newblock {\em A Nonconforming Finite Element Method for the 2D Vector Laplacian}.
\newblock PhD thesis, Washington University in St. Louis, 2022.

\bibitem{Barker.M;Cao.S;Stern.A2022arxiv}
Mary Barker, Shuhao Cao, and Ari Stern.
\newblock A nonconforming primal hybrid finite element method for the two-dimensional vector Laplacian.
\newblock {\em arXiv preprint 2206.10567}, 2022.

\bibitem{Bathe.K;Nitikitpaiboon.C;Wang.X1995cs}
KJ~Bathe, C~Nitikitpaiboon, and X~Wang.
\newblock A mixed displacement-based finite element formulation for acoustic fluid--structure interaction.
\newblock {\em Computers \& Structures}, 56(2-3):225--237, 1995.

\bibitem{Bermudez.A;Rodriguez.R1994cmame}
Alfredo Berm{\'u}dez and Rodolfo Rodr{\'\i}guez.
\newblock Finite element computation of the vibration modes of a fluid--solid system.
\newblock {\em Computer Methods in Applied Mechanics and Engineering},
  119(3-4):355--370, 1994.

\bibitem{Brenner.S;Cui.J;Li.F;Sung.L2008nm}
Susanne~C Brenner, Jintao Cui, Fengyan Li, and L-Y Sung.
\newblock A nonconforming finite element method for a two-dimensional $\curl-\curl$ and ${\rm grad}-\dv$ problem.
\newblock {\em Numerische Mathematik}, 109(4):509--533, 2008.

\bibitem{brenner2009quadratic}
Susanne~C Brenner and Li-Yeng Sung.
\newblock A quadratic nonconforming vector finite element for $H({\rm curl}; \Omega)\cap H ({\rm div}; \Omega)$.
\newblock {\em Applied Mathematics Letters}, 22:892--896, 2009.

\bibitem{Brenner.S;Sung.L;Cui.J2008}
Susanne~C Brenner, Li-yeng Sung, and Jintao Cui.
\newblock An interior penalty method for a two dimensional $\curl-\curl$ and ${\rm grad}-\dv$ problem.
\newblock {\em ANZIAM Journal}, 50:C947--C975, 2008.

\bibitem{Ciarlet.P1978book}
Philippe~G Ciarlet.
\newblock {\em The finite element method for elliptic problems}.
\newblock North-Holland, Amsterdam, 1978.

\bibitem{Crouzeix.M;Raviart.P1973}
Michel Crouzeix and P-A Raviart.
\newblock Conforming and nonconforming finite element methods for solving the stationary Stokes equations I. 
\newblock {\em Revue fran{\c{c}}aise d'automatique informatique recherche op{\'e}rationnelle. Math{\'e}matique}, 7(R3):33--75, 1973.
 

\bibitem{Beiroa.L;Brezzi.F;Marini.D;Alessandro.R2018}
Louren{\c{c}}o~Beir{\~a}o da~Veiga, Franco Brezzi, L~Donatella Marini, and Alessandro Russo.
\newblock Virtual element approximations of the vector potential formulation of magnetostatic problems.
\newblock {\em The SMAI Journal of Computational Mathematics}, 4:399--416, 2018.

\bibitem{Demlow.A;Hirani.A2014}
Alan Demlow and Anil~N. Hirani.
\newblock A posteriori error estimates for finite element exterior calculus: The de Rham complex.
\newblock {\em Foundations of Computational Mathematics}, 14(6):1337--1371, 2014.

\bibitem{Fortin.M;Soulie.M1983}
M~Fortin and M~Soulie.
\newblock A non-conforming piecewise quadratic finite element on triangles.
\newblock {\em International Journal for Numerical Methods in Engineering}, 19(4):505--520, 1983.

\bibitem{Hamdi.M;Ousset.Y;Verchery.G1978ijnme}
Mohamed~Ali Hamdi, Yves Ousset, and Georges Verchery.
\newblock A displacement method for the analysis of vibrations of coupled fluid-structure systems.
\newblock {\em International Journal for Numerical Methods in Engineering}, 13(1):139--150, 1978.

\bibitem{Hiptmair.R2002acta}
Ralf Hiptmair.
\newblock Finite elements in computational electromagnetism.
\newblock {\em Acta Numerica}, 11:237--339, 2002.

\bibitem{Hong.Q;Li.Y;Xu.J2022mc}
Qingguo Hong, Yuwen Li, and Jinchao Xu.
\newblock An extended galerkin analysis in finite element exterior calculus.
\newblock {\em Mathematics of Computation}, 91(335):1077--1106, 2022.

\bibitem{Li.Y2019sinum}
Yuwen Li.
\newblock Some convergence and optimality results of adaptive mixed methods in finite element exterior calculus.
\newblock {\em SIAM Journal on Numerical Analysis}, 57(4):2019--2042, 2019.

\bibitem{Liu.W;Zhang.S2022arxiv}
Wenjia Liu and Shuo Zhang.
\newblock A lowest-degree strictly conservative finite element scheme for incompressible stokes problem on general triangulations.
\newblock {\em arXiv preprint, 2108.10522}, 2021.

\bibitem{Marini.L1985sinum}
Luisa~Donatella Marini.
\newblock An inexpensive method for the evaluation of the solution of the lowest order Raviart--Thomas mixed method.
\newblock {\em SIAM Journal on Numerical Analysis}, 22(3):493--496, 1985.

\bibitem{Mirebeau.J2012aml}
Jean-Marie Mirebeau.
\newblock Nonconforming vector finite elements for $H({\rm curl}; \Omega)\cap H({\rm div}; \Omega)$.
\newblock {\em Applied Mathematics Letters}, 25(3):369--373, 2012.

\bibitem{Monk.P2003mono}
Peter Monk.
\newblock {\em Finite element methods for Maxwell's equations}.
\newblock Oxford University Press, 2003.

\bibitem{Park.C;Sheen.D2003}
Chunjae Park and Dongwoo Sheen.
\newblock $P_1$-nonconforming quadrilateral finite element methods for second-order elliptic problems.
\newblock {\em SIAM Journal on Numerical Analysis}, 41(2):624--640, 2003.

\bibitem{Xi.Y;Ji.X;Zhang.S2020jsc}
Yingxia Xi, Xia Ji, and Shuo Zhang.
\newblock A high accuracy nonconforming finite element scheme for Helmholtz transmission eigenvalue problem.
\newblock {\em Journal of Scientific Computing}, 83, 2020.

\bibitem{Xi.Y;Ji.X;Zhang.S2021cicp}
Yingxia Xi, Xia Ji, and Shuo Zhang.
\newblock A simple low-degree optimal finite element scheme for the elastic transmission eigenvalue problem.
\newblock {\em Communications in Computational Physics}, 30:1061--1082, 2021.

\bibitem{Zeng.H;Zhang.C;Zhang.S2020arxiv}
Huilan Zeng, Chen-Song Zhang, and Shuo Zhang.
\newblock Lowest-degree robust finite element scheme for a fourth-order elliptic singular perturbation problem on rectangular grids.
\newblock {\em arXiv preprint, 2006.15804}, 2020.

\bibitem{Zhang.S2020IMA}
Shuo Zhang.
\newblock Minimal consistent finite element space for the biharmonic equation on quadrilateral grids.
\newblock {\em IMA Journal of Numerical Analysis}, 40(2):1390--1406, 2020.

\bibitem{Zhang.S2021SCM}
Shuo Zhang.
\newblock An optimal piecewise cubic nonconforming finite element scheme for the planar biharmonic equation on general triangulations.
\newblock {\em Science China Mathematics}, 64(11):2579--2602, 2021.

\bibitem{Zhang.S2022padao-arxiv}
Shuo Zhang.
\newblock Partially adjoint discretizations of adjoint operators.
\newblock {\em arXiv preprint 2206.12114}, 2022.

\bibitem{Zhang.S2022primalddelta-arxiv}
Shuo Zhang.
\newblock A primal finite element scheme of the $\mathbf{H}(\od)\cap\mathbf{H}(\odelta)$ elliptic problem.
\newblock {\em arXiv preprint 2207.12003}, 2022.

\end{thebibliography}
\end{document}